\newcommand{\margnote}[1]{
\ifthenelse{\boolean{shownotes}}%
{\marginpar{\raggedright\tiny\texttt{#1}}}%
{}%
}
\newcommand{\hole}[1]{
\ifthenelse{\boolean{shownotes}}%
{\begin{center} \fbox{ \rule {.25cm}{0cm} \rule[-.1cm]{0cm}{.4cm}
\parbox{.85\textwidth}{\begin{center} \texttt{#1}\end{center}} \rule
{.25cm}{0cm}}\end{center}} {} }
\title[Large-time behavior for the Euler--alignment system with nonlocal forces]{Exponential and algebraic decay in  Euler--alignment system with nonlocal interaction forces}
\author[Carrillo]{Jos\'{e} A. Carrillo}
\address[Jos\'{e} A. Carrillo]{\newline Mathematical Institute, University of Oxford, 
    \newline Oxford OX2 6GG, United Kingdom}
\email{jose.carrillo@maths.ox.ac.uk}
\author[Choi]{Young-Pil Choi}
\address[Young-Pil Choi]{\newline Department of Mathematics, Yonsei University, \newline
Seoul 03722, Republic of Korea}
\email{ypchoi@yonsei.ac.kr}
\author[Koo]{Dowan Koo}
\address[Dowan Koo]{\newline Mathematical Institute, University of Oxford, 
    \newline Oxford OX2 6GG, United Kingdom}
\email{dowan.koo@maths.ox.ac.uk}
\author[Tse]{Oliver Tse}
\address[Oliver Tse]{\newline Department of Mathematics and Computer Science, Eindhoven University of Technology, 
\newline P.O. Box 513, 5600 MB Eindhoven, The Netherlands}
\email{o.t.c.tse@tue.nl}
\numberwithin{equation}{section}
\newtheorem{theorem}{Theorem}[section]
\newtheorem{lemma}{Lemma}[section]
\newtheorem{corollary}{Corollary}[section]
\newtheorem{proposition}{Proposition}[section]
\newtheorem{remark}{Remark}[section]
\newcommand{\R}{\mathbb R}
\newcommand{\ls}{\lesssim}
\newcommand{\bq}{\begin{equation}}
\newcommand{\eq}{\end{equation}}
\newcommand{\e}{\varepsilon}
\newcommand{\lt}{\left}
\newcommand{\rt}{\right}
\newcommand{\lal}{\langle}
\newcommand{\ral}{\rangle}
\newcommand{\pa}{\partial}
\newcommand{\calL}{\mathcal{L}}
\DeclareMathOperator*{\esssup}{ess\,sup}
\DeclareMathOperator*{\supp}{supp}
\newcommand{\scrB}{\mathscr{B}}
\DeclareMathOperator*{\argmin}{arg\,min}
\begin{document}
%%%%%%%%%%%%%%%%
\allowdisplaybreaks

%\date{\today}

\subjclass[2020]{}
\keywords{Euler--alignment model, flocking dynamics, exponential and algebraic decay, nonlocal interaction forces, Wasserstein distance.}

\begin{abstract} 
We investigate the large-time behavior of the pressureless Euler system with nonlocal velocity alignment and interaction forces, with the aim of characterizing the asymptotic convergence of classical solutions under general interaction potentials $W$ and communication weights. We establish quantitative convergence in three settings. In one dimension with $(\lambda,\Lambda)$-convex potentials, i.e., potentials satisfying uniform lower and upper quadratic bounds, bounded communication weights yield exponential decay, while weakly singular ones lead to sharp algebraic rates. For the Coulomb--quadratic potential $W(x)=-|x|+\frac12 |x|^2$, we prove exponential convergence for bounded communication weights and algebraic upper bounds for singular communication weights. In a multi-dimensional setting with uniformly $(\lambda,\Lambda)$-convex potentials, we show exponential decay for bounded weights and improved algebraic decay for singular ones. In all cases, the density converges (up to translation) to the minimizer of the interaction energy, while the velocity aligns to a uniform constant. A unifying feature is that the convergence rate depends only on the local behavior of communication weights: bounded kernels yield exponential convergence, while weakly singular ones produce algebraic rates. Our results thus provide a comprehensive description of the asymptotic behavior of Euler--alignment dynamics with general interaction potentials.
\end{abstract}

\maketitle \centerline{\date}

%\setcounter{tocdepth}{1}
%\tableofcontents

%%%%%%%%%%%%%%%%%%%%%%%%%%%%%%%%%%%%%%%%%%%%%%%%%%%%%%%%%%%%%%%%%%%%%%%%%%%%%%%%%5
%
%
%                        Section: Introduction 
%
%
%%%%%%%%%%%%%%%%%%%%%%%%%%%%%%%%%%%%%%%%%%%%%%%%%%%%%%%%%%%%%%%%%%%%%%%%%%%%%%%%%
\section{Introduction}
We are concerned with the dynamics of a pressureless Euler system with nonlocal forces, which couples interaction through a potential with velocity alignment effects. The system reads
\begin{align}\label{main_sys}
\begin{aligned}
&\pa_t \rho + \nabla \cdot (\rho u) =0, \quad (t,x) \in \R_+ \times \R^d,\cr
&\pa_t (\rho u) + \nabla \cdot(\rho u \otimes u) =  - \rho \nabla W \star \rho -\rho \int_{\R^d} \phi(|x-y|)(u(x) - u(y))\rho(y)\,dy.
\end{aligned}
\end{align}
Here $\rho(t,x)\ge 0$ denotes the density and $u(t,x)\in\R^d$ the velocity field with dimension $d\geq 1$. The interaction potential $W:\R^d\to\R$ models distance-dependent interactions among individuals, and is assumed to be symmetric, $W(x)=W(-x)$, in accordance with Newton's action-reaction principle. The second term represents a weighted velocity alignment, where the tendency of agents to match their velocities is modulated by the communication weight $\phi:[0,\infty)\to[0,\infty)$, which depends only on pairwise distance and is nonincreasing so that closer agents exert stronger alignment forces.

The system \eqref{main_sys}, commonly referred to as the {\it Euler--alignment model}, originates as a hydrodynamic limit of agent-based alignment dynamics such as the Cucker--Smale model \cite{CS07}, where each particle adapts its velocity to its neighbors through nonlocal interactions. The additional convolution term with $\nabla W$ accounts for attractive-repulsive forces induced by a potential $W$, thus incorporating both alignment and aggregation effects into the macroscopic description. Such equations provide a continuum framework for collective phenomena across various disciplines: in biology, they describe large-scale behaviors such as the flocking of birds, the schooling of fish, or the coordinated motion of insects; in physics, they capture the self-organization of active matter and the emergence of coherent patterns in particle ensembles; and in the social sciences, they serve as models for opinion dynamics, consensus formation, and other collective behaviors in human society.
We refer to \cite{CFTV10,CCP17, CHL17, MMPZ19, Shv21, Shv24} for general introductions, surveys, and further results on Cucker--Smale type flocking models, as well as to the references therein for broader perspectives.

From the mathematical point of view, the Euler--alignment system presents several challenging features. On the one hand, the absence of pressure renders the dynamics susceptible to concentration and potential blow-up. On the other hand, the nonlocal alignment mechanism has a regularizing effect, promoting velocity alignment and, under suitable conditions, preventing singularity formation. A central problem is thus to understand the competition between these two mechanisms and to establish rigorous results on global well-posedness, asymptotic behavior, and the emergence of collective patterns.  

In recent years, substantial progress has been made on Euler-type flocking models.  For the system \eqref{main_sys} without interaction potential $W$, global existence of solutions and their long-time behavior have been studied in different regimes \cite{HT08, TT14, HKK15, ST17, ST18, KT18, DKRT18, C19, CH19,  LS19, CJu24, DMPW19, Shv21, LTST22,   Shv24, CaGa, Tpre}, while the corresponding kinetic models have been analyzed in \cite{COP09, HL09, CFRT10, KMT13, CCH14, HKK14,  MMPZ19, CZ21}. Critical threshold phenomena, conditions on initial data separating global regularity from finite-time blow-up, have been identified for various alignment kernels \cite{TT14,  CCTT16, T20, BLT23, LTWpre}.  Connections with many-particle systems and kinetic descriptions have also been developed, yielding mean-field limits \cite{CC21, FP24, ACPSp} or hydrodynamic limits \cite{KMT15, FK19, CCJ21, BT25}. We also mention related work on Lagrangian trajectories \cite{Les20} and sticky dynamics for the one-dimensional Euler--alignment model \cite{ LeTa23, Ga25}.  

In the presence of the interaction potentials or other external forces, the global existence and large-time behavior of solutions has likewise been discussed in \cite{CCP17, CCT19, CWKZ19, CWKZ20,  ST2020a, ST2021b, Shv21, CCTZ25, LTWpre}. For the quadratic potential $W(x) = \frac{|x|^2}2$, we find
\[
\nabla W \star \rho = \int_{\R^d} (x-y)\rho(y)\,dy = x \|\rho\|_{L^1} - \rho_c, \quad \rho_c := \int_{\R^d} x \rho(x)\,dx,
\]
so the interaction force reduces, after recentering at the center of mass, to an external quadratic field, see the reformulation in \cite[Remark 2.2]{ST2020a}. Moreover, since momentum is conserved, when $\phi\equiv1$ the alignment term becomes a linear damping toward the average velocity:
\[
\int_{\R^d} (u(y)-u(x)) \rho(y)\,dy = \int_{\R^d} \rho u\,dx -u(x).
\]
In one dimension, the system \eqref{main_sys} with $\phi \equiv 1$ and Coulomb--quadratic potential $W(x)=-|x|+\frac12|x|^2$ can then be identified with a damped pressureless Euler--Poisson system with constant background states; see e.g. \cite{ELT01,  CCZ16, BL20, CKKT} for sharp critical threshold results.  

In the current work, we quantify the decay of classical solutions to \eqref{main_sys}. The classical notion of {\it flocking} combines uniform boundedness of the density support with velocity alignment:
\[
\sup_{t \ge 0} {\rm diam}\big({\rm supp}(\rho(t))\big) < \infty, \quad \sup_{x,y \in {\rm supp}(\rho(t))} |u(t,x) - u(t,y)| \to 0 \quad \mbox{as } t \to \infty.
\]

In the case without interaction potential ($W\equiv 0$), flocking requires sufficiently strong long-range communication. More precisely, {\it unconditional flocking}, that is, flocking emerging for all initial data, occurs if the communication weight has a {\it heavy tail},
\[
\int^\infty \phi(r)\,dr = \infty,
\]
as shown in \cite{ HT08,HL09,CFRT10}. This condition ensures that agents can still influence each other even at arbitrarily large distances, so that alignment eventually prevails regardless of the initial configuration. On the other hand, in the presence of quadratic confinement, it was proved in  \cite{ST2020a} that the long-range requirement for unconditional flocking can be weakened to
\bq\label{thin}
\int^\infty r\phi(r)\,dr = \infty,
\eq
which admits {\it thinner tails}. Here, the confining potential effectively counteracts dispersion of the support, keeping the population within a bounded region and thereby amplifying the influence of $\phi$. Thus, confinement reduces the burden on the communication weight: even kernels whose decay is faster than that required by the heavy-tail condition may still lead to flocking. Nevertheless, both criteria impose strong restrictions on the decay of $\phi$ at infinity. In particular, when $\phi$ is integrable, flocking may still occur, but only under additional assumptions on the initial configuration.
The precise asymptotics in the presence of more general interaction potentials $W$ remain largely open.

These developments highlight the delicate interplay between nonlocal interaction forces and alignment mechanisms in shaping the asymptotic behavior of pressureless Euler dynamics.  
The main objective of this paper is to clarify this interplay in the presence of general interaction potentials.  

\smallskip

We focus on two representative classes of interaction potentials:
\begin{itemize}
\item In any spatial dimension, we consider interaction potentials that satisfy two-sided quadratic bounds.  
More precisely, we call $W:\R^d \to \R$ a \emph{$(\lambda,\Lambda)$-convex potential} if $W \in C^1(\R^d,\R)$ and there exist constants $0<\lambda \leq \Lambda < \infty$ such that
\begin{equation}\label{def:lam}
\lambda |x-y|^2 \le \langle \nabla W(x) - \nabla W(y), x-y \rangle \le \Lambda |x-y|^2 
\quad \text{for all } x,y \in \R^d.
\end{equation}
When $W \in C^2(\R^d,\R)$, this is equivalent to the uniform Hessian bounds
\[
\lambda I_d \preceq D^2 W(x) \preceq \Lambda I_d, \quad \text{for all } x \in \R^d.
\]
\item in one dimension, the repulsive--attractive Coulomb--quadratic potential
\bq\label{intro_W}
W(x) = -|x|+\frac12 x^2,
\eq
which lies beyond the uniformly convex framework.
\end{itemize}
For the communication weight $\phi$, we distinguish between bounded kernels
and weakly singular kernels of the form
\[
\phi(r)\sim r^{-\alpha}, \quad r\to0^+,
\]
with $\alpha>0$. 
This dichotomy turns out to be decisive for the rate of convergence: bounded kernels enforce exponential decay, whereas weakly singular ones produce algebraic rates. 

With these assumptions, the limiting flock profiles $(\rho_\infty,u_\infty)$ are characterized as follows.
The asymptotic velocity is given by conservation of momentum,
\begin{equation}\label{eq:uinfty}
u_\infty = \frac{1}{\|\rho_0\|_{L^1(\R^d)}}\int_{\R^d} \rho_0 u_0\,dx,
\end{equation}
while the density converges (up to translation) to a minimizer of the interaction energy
\[
\rho_\infty \in \argmin \lt\{ \iint_{\R^d\times\R^d} W(x-y)\rho(x)\rho(y)\,dxdy\,:\,\rho\in \mathcal P_2(\R^d)\rt\}.
\]
Here $\mathcal P_2(\R^d)$ denotes the set of probability measures with finite second moment. 
Since the interaction energy is translation invariant, the uniqueness of minimizers should be considered only up to translation; when needed, we use a centered representative. 

For instance, if $W$ is $\lambda$-convex with $\lambda>0$, then the unique minimizer up to translation is given by $\rho_\infty=\delta_0$.  
In contrast, for the Coulomb--quadratic potential \eqref{intro_W} on the real line, the unique minimizer up to translation is the uniform density on an interval, 
\[
\rho_\infty(x)=\frac12\,\mathbf{1}_{[-1,1]}(x),
\]
again up to translation. In higher dimensions, the analogue of this result is well known in potential theory, known as the Frost lemma: for Coulomb interactions combined with quadratic confinement, the minimizer is the uniform probability measure supported on a ball. This ``droplet profile'' plays the role of the equilibrium distribution in the theory of Coulomb and log gases; see, for instance, \cite{Ser19} for a detailed discussion. Thus, the one-dimensional interval law extends naturally to higher dimensions and a variety of perturbed Riesz potentials, see \cite{CMMRSV20,CS24} and the references therein, for instance.

\smallskip

 Our main contributions can be summarized in three representative cases:
\begin{itemize}
\item {\it One-dimensional dynamics with $(\lambda,\Lambda)$-convex potentials.}  
We remove all assumptions on the tail behavior of $\phi$ and establish quantitative convergence rates. 
For bounded kernels, exponential decay holds; for weakly singular kernels, we obtain sharp algebraic decay rates.  

\item {\it One-dimensional dynamics with the Coulomb--quadratic potential.}  
We construct perturbation variables around the asymptotic flock profile and derive stability estimates with explicit decay rates using hypocoercivity-type arguments.  
We show exponential convergence for bounded kernels and not necessarily sharp algebraic decay for weakly singular kernels.  

\item {\it Multi-dimensional dynamics with $(\lambda,\Lambda)$-convex potentials.}  
We combine convexity estimates with a hypocoercivity-type strategy to prove convergence. 
Exponential decay is established for bounded kernels and not necessarily sharp algebraic decay for weakly singular kernels. 
\end{itemize}

 A unifying feature of our results is that the {\it decay rate of convergence depends only on the local behavior of $\phi$ near the origin}: bounded kernels yield exponential decay, whereas weakly singular ones lead to algebraic rates. This provides a comprehensive description of the long-time asymptotics for Euler--alignment dynamics under broad classes of interaction potentials. To the best of our knowledge, these results constitute the first rigorous {\it quantitative characterization} of the large-time behavior of the Euler--alignment system with general interaction potentials and singular communication weights. In particular, the one-dimensional  $(\lambda,\Lambda)$-convex case with weakly singular kernels yields sharp algebraic decay rates, while for the Coulomb--quadratic potential and the multidimensional  $(\lambda,\Lambda)$-convex case, we establish only algebraic upper bounds.  The optimality of the latter remains an open problem, highlighting a key direction for future investigation. We emphasize that these results are of an {\it a priori} nature: they describe the asymptotic behavior of classical global solutions, without addressing the global regularity problem itself.

For convenience of notation, we extend the communication weight radially by
\[
\psi(x):=\phi(|x|), \quad x \in \R^d.
\]
Thus $\psi$ depends only on the radial variable, and throughout we do not distinguish between its radial profile on $[0,\infty)$ and its radial extension to $\R^d$.  
This convention will simplify the presentation of subsequent computations.  

Since the total mass is conserved in time, we may assume, without loss of generality, that $\rho$ is a probability density function, that is,
\[
\|\rho(t)\|_{L^1} = 1 \quad \text{for all } t \geq 0.
\]
In particular, under this normalization, the conserved asymptotic velocity is
\[
u_\infty=\int_{\R^d} \rho_0 u_0\,dx.
\]

Before stating our main results, we fix some notation.  
For probability measures $\mu,\nu$ on $\R^d$ with finite $p$-th moment, the $p$-Wasserstein distance is defined by
\[
 {\rm d}_p(\mu,\nu) := \inf_{\pi \in \Pi(\mu,\nu)} \lt(\iint_{\R^d\times\R^d} |x-y|^p \, d\pi(x,y)\rt)^{1/p},
\]
where $\Pi(\mu,\nu)$ denotes the set of all couplings of $\mu$ and $\nu$.  
In particular, we denote by $ {\rm d}_\infty(\mu,\nu)$ the $\infty$-Wasserstein distance
\[
 {\rm d}_\infty(\mu,\nu) := \inf_{\pi \in \Pi(\mu,\nu)} \, \sup_{(x,y)\in\supp \pi} |x-y|,
\]
which measures the maximal displacement between supports of $\mu$ and $\nu$.

For asymptotic notations, given two nonnegative functions $f(t)$ and $g(t)$, we write $f(t)\ls g(t)$ if there exists a constant $C>0$ independent of $t$ such that $f(t)\le C g(t)$ for $t \geq 0$. In this sense, $f(t)=\mathcal{O}(g(t))$ means $f(t)\lesssim g(t)$ for $t \geq 0$, while $f(t)=\Theta(g(t))$ as $t\to\infty$ indicates that both $f(t)\lesssim g(t)$ and $g(t)\lesssim f(t)$ hold.

%%%%%%%%%%%%%%%%%%%%%%%%%%%%%%%%%%%%%%%%%%%%%%%%%%%%%%%%%%%%%%%%%%%%%%%%%%%%%%%%%5
%
%
%                        
%
%
%%%%%%%%%%%%%%%%%%%%%%%%%%%%%%%%%%%%%%%%%%%%%%%%%%%%%%%%%%%%%%%%%%%%%%%%%%%%%%%%%
\subsection{Main results}

We now formulate our main convergence theorems, which provide rigorous statements of the asymptotic behavior outlined in the preceding discussion. These results describe the long-time asymptotics of solutions under different structural assumptions on the interaction potential and the communication weight, and they reflect the delicate interplay between aggregation, dispersion, and alignment.  In particular, we identify conditions under which the density profile converges either to a traveling Dirac mass or to a nontrivial flock profile determined by the interaction potential, while the velocity field relaxes to a uniform asymptotic state. The analysis is divided into three representative cases, which together illustrate the range of possible emergent behaviors.  

Throughout, the center of mass trajectory evolves as
\[
\eta_c(t) :=\int_{\R^d} x \rho(t,x)\,dx.
\]
%%%%%%%%%%%%%%%%%%%%%%%%%%%%%%%%%%%%%%%%%%%%%%%%%%%%%%%%%%%%%%%%%%%%%%%%%%%%%%%%%5
%
%
%                        
%
%
%%%%%%%%%%%%%%%%%%%%%%%%%%%%%%%%%%%%%%%%%%%%%%%%%%%%%%%%%%%%%%%%%%%%%%%%%%%%%%%%%
\subsubsection{$(\lambda,\Lambda)$-convex potentials in one dimension}

We begin with the one-dimensional case, where the interaction potential $W$ is assumed to be $(\lambda,\Lambda)$-convex in the sense of \eqref{def:lam}. In one dimension, this reduces to the two-sided slope condition
\bq\label{eq:lc:1d:ub}
 \lambda (x-y)  \le W'(x) - W'(y) \le \Lambda (x-y) \quad \text{for all }  x>y,
\eq
for some $0 < \lambda \leq \Lambda < \infty$. This restriction controls the strength of attraction and prevents overly fast collapse of the density profile, which in turn enables us to establish lower bounds on the decay rate.  

\begin{theorem}\label{thm_1:new}
Let $d=1$ and let $(\rho,u)$ be a global-in-time classical solution of the system \eqref{main_sys}. Suppose that the interaction potential $W$ satisfies \eqref{eq:lc:1d:ub} and $\rho_0$ is compactly supported. Define
\[
\rho_\infty(t,x) := \delta_{\eta_c(t)}(x).
\]
Then the following decay estimates hold:
\begin{itemize}
\item[(i)] If the communication weight function $\psi$ satisfies 
\bq\label{psi:type1}
0< \psi_{\rm m} \le  \psi(x) \le \psi_{\rm M}\quad \text{for all } |x| \le R
\eq
for some $R>0$, then we have
\[
 {\rm d}_\infty(\rho(t),\rho_\infty(t)) + \|u(t,\cdot)- u_\infty\|_{L^\infty({\rm supp}(\rho(t)))} \to 0
\]
exponentially fast as $t\to \infty$.
\item[(ii)] If the communication weight $\psi$ satisfies 
\bq\label{psi:type2}
\frac{A}{|x|^{\alpha}} \le \psi(x) \le \frac{B}{|x|^{\alpha}} \quad \text{for all } |x| \le R
\eq
for some $\alpha \in (0,1)$, $R >0$, then we have
\[
 {\rm d}_\infty(\rho(t),\rho_\infty(t)) = \Theta \lt(t^{-\frac{1}{\alpha}}\rt),\quad  \|u(t,\cdot)- u_\infty\|_{L^\infty({\rm supp}(\rho(t)))}= \mathcal{O}  \lt(t^{-\lt(\frac{1}{\alpha}-1\rt)}\rt) 
\]
as $t \to \infty$.
\end{itemize}
\end{theorem}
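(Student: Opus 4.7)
The plan is to recast the one-dimensional system in Lagrangian coordinates via the pseudo-inverse of $\rho$, extract a coupled ODE system for the diameter of the support and the extremal-characteristic velocity gap, and close these estimates via a Lyapunov-type functional whose effective decay rate is dictated by the local behavior of $\phi$ near the origin.

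\textbf{Lagrangian reduction.} Since classical solutions in one dimension preserve the ordering of characteristics, the monotone pseudo-inverse $X(t,m):[0,1]\to\R$ of the CDF of $\rho(t)$ together with $v(t,m):=u(t,X(t,m))$ satisfies, after centering by $\widetilde X:=X-\eta_c$ and $\widetilde v:=v-u_\infty$,
\begin{align*}
\partial_t \widetilde X(t,m) &= \widetilde v(t,m),\\
\partial_t \widetilde v(t,m) &= -\int_0^1 W'(\widetilde X(t,m)-\widetilde X(t,m'))\,dm' - \int_0^1 \phi(|\widetilde X(t,m)-\widetilde X(t,m')|)(\widetilde v(t,m)-\widetilde v(t,m'))\,dm',
\end{align*}
with $\int \widetilde X\,dm = \int \widetilde v\,dm = 0$. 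In this frame $ {\rm d}_\infty(\rho(t),\rho_\infty(t))=\|\widetilde X\|_{L^\infty_m}$ and $\|u(t,\cdot)-u_\infty\|_{L^\infty(\supp\rho(t))}=\|\widetilde v\|_{L^\infty_m}$. I introduce the diameter $D(t):=X(t,1)-X(t,0)$ and the endpoint velocity gap $\mathcal V(t):=v(t,1)-v(t,0)$, so that monotonicity in $m$ gives $\dot D=\mathcal V$ and $\|\widetilde X\|_{L^\infty_m}\le D$.

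\textbf{Key ODE inequalities.} Differentiating $\mathcal V$ in time and applying \eqref{eq:lc:1d:ub} to the difference of the two endpoint convolutions yields the potential contribution $\le -\lambda D$. For the alignment contribution $-(A_1-A_0)$, the decomposition
\[
A_1-A_0=\mathcal V\!\int_0^1 \phi(|X(1)-X(m')|)\,dm'+\int_0^1[\phi(|X(1)-X(m')|)-\phi(|X(m')-X(0)|)](v(0)-v(m'))\,dm'
\]
isolates a damping $\gtrsim \gamma(D)\mathcal V$ with $\gamma(D)\ge\psi_{\rm m}$ whenever $D\le R$ in case (i), and $\gamma(D)\gtrsim D^{-\alpha}$ in case (ii), the remainder being controlled by the monotonicity of $\phi$ and the geometric constraint $X(m')\in[X(0),X(1)]$. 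A reverse inequality $\dot{\mathcal V}\ge -\Lambda D - C D^{-\alpha}\mathcal V$ also holds in case (ii) using the upper bound in \eqref{psi:type2}.

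\textbf{Closing the decay.} For \emph{part (i)}, once $D(t)\le R$ the system $\dot D=\mathcal V$, $\dot{\mathcal V}\le -\lambda D-c\mathcal V$ is a damped linear oscillator; a Lyapunov functional of the form $L(t)=aD^2+2D\mathcal V+b\mathcal V^2$ with $a,b>0$ tuned so that $L\gtrsim D^2+\mathcal V^2$ and $\dot L\le -\kappa L$ yields exponential decay, and $\|\widetilde v\|_{L^\infty_m}$ is recovered via a Danskin-type argument at the extremal characteristic. The hypothesis $D(t)\le R$ is either enforced by a continuity/bootstrap argument on $L$, or reached after a finite initial period handled by the energy identity and the uniform convexity of $W$. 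For \emph{part (ii)}, the nonlinear ODE exhibits a self-similar scaling $D\sim t^{-1/\alpha}$; one makes this rigorous via a scale-aware functional such as $D^2+\kappa D^\alpha\mathcal V$, giving $D(t)=\mathcal O(t^{-1/\alpha})$, while the matching lower bound $D(t)\gtrsim t^{-1/\alpha}$ follows from the reverse inequality and a comparison with a minorizing ODE, producing the $\Theta(t^{-1/\alpha})$ asymptotic. The velocity estimate $\|u-u_\infty\|_{L^\infty}=\mathcal O(t^{-(1/\alpha-1)})$ is obtained by integrating the pointwise bound $|\partial_t\widetilde v(t,m)|\lesssim D(t)+D(t)^{-\alpha}\mathcal V(t)$ in time from $t$ to $\infty$ and using $\widetilde v\to 0$.

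\textbf{Main obstacle.} The principal difficulty is to make the alignment estimate sharp in the singular case: the remainder in the splitting of $A_1-A_0$ formally involves $\int|\phi(|X(1)-X(m')|)-\phi(|X(m')-X(0)|)|\,dm'$, whose singular behavior as $m'\to 0$ or $1$ must be dominated by the main damping term using only the monotonicity of $\phi$ and the geometric constraint on characteristics, rather than Lipschitz regularity. A secondary challenge is the matching $\Theta(t^{-1/\alpha})$ lower bound in part (ii), which requires both bounds in \eqref{psi:type2} simultaneously and a comparison with an explicit subsolution whose decay dictates the optimal rate.
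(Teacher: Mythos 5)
Your overall plan of passing to Lagrangian coordinates (the quantile parametrization is equivalent to the paper's flow map in one dimension) and deriving coupled differential inequalities for the support diameter and a velocity-type gap is in the right spirit, but there is a genuine gap that would prevent the argument from closing: you work directly with the velocity $v$, and the decomposition of the endpoint alignment difference $A_1-A_0$ leaves a remainder
\[
\int_0^1\bigl[\phi(|X(1)-X(m')|)-\phi(|X(m')-X(0)|)\bigr]\bigl(v(0)-v(m')\bigr)\,dm'
\]
that depends on the interior velocities $v(m')$ for $m'\in(0,1)$. These are \emph{not} controlled by the endpoint gap $\mathcal V=v(1)-v(0)$, since $v$ has no monotonicity in the mass variable. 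In the singular case, the factor $\phi(|X(m')-X(0)|)$ becomes non-integrable near $m'=0$, and one would need quantitative H\"older regularity of $v(m')$ in $m'$ to compensate — something not available a priori. You flag this as the ``main obstacle,'' but the appeal to monotonicity of $\phi$ and the ordering $X(m')\in[X(0),X(1)]$ does not resolve it: the sign structure of the $\phi$-difference helps only where $v(0)-v(m')$ has a favorable sign, and there is no such control.

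The paper's key idea, which your proposal misses, is the auxiliary variable $\omega(t,x)=v(t,x)+\int_\R\Psi\bigl(\eta(t,x)-\eta(t,y)\bigr)\rho_0(y)\,dy$ of \eqref{def:om}, where $\Psi$ is a primitive of $\psi$. Since the alignment operator is exactly a time derivative of $\int\Psi(\eta(\cdot)-\eta(y))\rho_0(y)\,dy$, the variable $\omega$ evolves by the potential force alone, and the pairwise quantities $X=\eta(x)-\eta(y)$, $Y=\omega(x)-\omega(y)$ satisfy the \emph{closed} ODI system
\[
Y - c_1 X^{1-\alpha}\le\dot X\le Y - c_2 X^{1-\alpha},\qquad -\Lambda X\le\dot Y\le-\lambda X,
\]
with no remainder; the damping sits in the $X$-equation, and the $\Psi$-increments are bounded pointwise by Lemma~\ref{lem:Psi} and Corollary~\ref{cor:Psi} in terms of $\eta(x)-\eta(y)$ and $D_\eta$ only (no interior velocities appear). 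This is the structural difference that makes the estimates close, especially in the singular regime. The exponential rate in (i) and the two-sided $\Theta(t^{-1/\alpha})$ rate in (ii) then follow from phase-plane arguments on this system (Lemmas~\ref{lem:ODI_1} and~\ref{lem:ODI_2}); in the singular case, the sharp rate comes from reparametrizing $X$ as a function of the monotone quantity $Y$ and trapping $g^{1-\alpha}$ between two linear barriers near the origin, which is considerably more robust than the scale-aware functional $D^2+\kappa D^\alpha\mathcal V$ you sketch. In short: introduce $\omega$ first, and the remainder problem disappears.
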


\begin{remark}
The assumptions on the communication weight $\psi$ in \eqref{psi:type1}--\eqref{psi:type2} are imposed only locally, thus allowing degeneracy; $\psi$ may vanish outside a bounded region. In our setting, this loss of Cucker--Smale interaction is compensated by the attractive potential $W$, which confines the agents. We note that \cite{DS21} also considered degenerate communication, where alignment on the one-dimensional torus was achieved by exploiting the periodic geometry rather than confinement.
\end{remark}

\begin{remark} If we only assume that $W$ is $\lambda$-convex, i.e., $W$ satisfies
\bq\label{eq:lc:1d}
 \lambda (x-y)  \le  W'(x) - W'(y)\quad \text{for all }  x>y,
\eq
in both cases $(i)$ and $(ii)$, we still have convergence although without quantitative rates:
\[
 {\rm d}_\infty(\rho(t),\delta_{\eta_c(t)}) + \|u(t,\cdot)- u_\infty\|_{L^\infty({\rm supp}(\rho(t)))} \to 0 \quad \mbox{as } t \to \infty.
\]
\end{remark}

 \begin{remark} 
If $W \in C^2(\R)$, then the assumptions \eqref{eq:lc:1d:ub} is equivalent to
\[
\lambda \leq W''(x) \leq \Lambda, \quad x \in \R.
\]
In this setting, exponential convergence was obtained in \cite{ST2020a} under the boundedness, together with a tail growth condition that
\[
\psi \in L^\infty(\R) \quad \mbox{and} \quad \limsup_{r \to \infty}  r \psi(r) = \infty.
\]
By contrast, Theorem \ref{thm_1:new} (i) shows that for bounded kernels, exponential decay follows without any such tail requirement. Moreover, while \cite{ST2020a} only treats the bounded case, our result further extends to weakly singular kernels, for which we obtain sharp algebraic decay rates as stated in part (ii).
\end{remark}

This theorem confirms the precise convergence rates mentioned in the introduction: exponential relaxation for bounded communication weights, and sharp algebraic decay for weakly singular kernels.

%%%%%%%%%%%%%%%%%%%%%%%%%%%%%%%%%%%%%%%%%%%%%%%%%%%%%%%%%%%%%%%%%%%%%%%%%%%%%%%%%5
%
%
%                        
%
%
%%%%%%%%%%%%%%%%%%%%%%%%%%%%%%%%%%%%%%%%%%%%%%%%%%%%%%%%%%%%%%%%%%%%%%%%%%%%%%%%%
\subsubsection{Repulsive Coulomb potential with quadratic confinement in one dimension}

We next consider the one-dimensional case with the repulsive--attractive Coulomb--quadratic potential:
\bq\label{def_W_cc}
W(x) = -|x| + \frac12 x^2,
\eq
which lies outside the $\lambda$-convex framework.

In this case, the dynamics lead not to collapse into a Dirac mass, but rather to convergence towards a traveling density profile $\rho_\infty$ that preserves this equilibrium shape.  

\begin{theorem}\label{thm_2:new}
Let $d=1$ and let $(\rho,u)$ be a global-in-time classical solution of the system \eqref{main_sys}. Suppose that the interaction potential $W$ is given by \eqref{def_W_cc} and $\rho_0$ is compactly supported. Define
\[
\rho_\infty(t,x):= \frac{1}{2}\mathbf{1}_{\lt\{\eta_c(t)-1 \le x \le \eta_c(t)+1 \rt\}},
\] 
and we assume that 
\[
\psi(x) \ge \psi_{\rm m}>0\quad \text{for all } |x| \le R
\]
for some $R>0$. Then the following decay estimates hold:
\begin{itemize}
\item[(i)]  If  $\psi$ is bounded from above, then we have
\[
 {\rm d}_\infty(\rho(t),\rho_\infty(t))+  \|u(t,\cdot)- u_\infty\|_{L^\infty({\rm supp} (\rho(t)))} \to 0
\]
exponentially fast as $t\to \infty$.
\item[(ii)] If  $\psi$ satisfies  
\[
 \psi(x) \le \frac{B}{|x|^{\alpha}}\quad \mbox{for all $0\le |x| \le R$ for some $\alpha \in (0,1)$ and $R>0$}, 
\]
then we have
\[
 {\rm d}_2(\rho(t),\rho_\infty(t)) = \mathcal{O}\lt(t^{-(\frac{3}{4\alpha}-\frac{1}{2}) }\rt),\quad  \|u(t,\cdot)-u_\infty\|_{L^2(\rho(t))} =\mathcal{O} \lt(t^{- \lt\{(1-\alpha)(\frac{3}{4\alpha}-\frac{1}{2})\rt\} }\rt)
\]
as $t \to \infty$.
\end{itemize}
\end{theorem}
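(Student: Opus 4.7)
The plan is a modulated-energy / hypocoercivity argument centered at the traveling flock profile. I would first pass to the comoving frame, setting $\tilde{\rho}(t,x) := \rho(t, x+\eta_c(t))$ and $\tilde{u}(t,x) := u(t, x+\eta_c(t)) - u_\infty$; mass and momentum conservation then give $\int \tilde{\rho}\,dx = 1$, $\int x\tilde{\rho}\,dx = 0$, and $\int \tilde{\rho}\tilde{u}\,dx = 0$, so that the target in the comoving frame is the stationary droplet $\rho_\infty = \tfrac12\mathbf{1}_{[-1,1]}$ with zero velocity. A preliminary support-confinement step is required: using the quadratic attraction built into $W$ and the one-dimensional characteristic flow, I would show that $\mathrm{supp}(\tilde{\rho}(t))$ remains in a fixed interval $[-R_\ast, R_\ast]$ uniformly in time, so that the lower bound $\phi(|x-y|) \ge \psi_{\rm m}$ is available on all pairs in the support.

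I then work with the modulated energy
\[
\mathcal{L}(t) := \tfrac12 \int \tilde{\rho}\tilde{u}^2\,dx + \bigl(\mathcal{E}(\tilde{\rho}) - \mathcal{E}(\rho_\infty)\bigr),\qquad \mathcal{E}(\mu) := \tfrac12 \iint W(x-y)\,d\mu(x)\,d\mu(y),
\]
which is nonnegative because $\rho_\infty$ is the unique centered minimizer of $\mathcal{E}$ on $\mathcal{P}_2(\R)$. Direct energy balance yields $\dot{\mathcal{L}} = -\mathcal{D}$, with $\mathcal{D} = \tfrac12\iint \phi(|x-y|)|\tilde{u}(x)-\tilde{u}(y)|^2 \tilde{\rho}(x)\tilde{\rho}(y)\,dx\,dy$. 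Since in one dimension the Coulomb part $-|x|$ is (up to a constant) the Green's function of $-\partial_{xx}$, one obtains $\mathcal{E}(\tilde\rho) - \mathcal{E}(\rho_\infty) \simeq \|\tilde{\rho}-\rho_\infty\|_{\dot H^{-1}}^2 = \int |F_{\tilde{\rho}}-F_{\rho_\infty}|^2\,dx$, where $F$ denotes the cumulative distribution; combined with the confined support this yields coercivity of the form $\mathcal{L} \gtrsim d_2^4(\tilde{\rho},\rho_\infty)$, which is quartic rather than quadratic, reflecting the non-uniform convexity of $W$. The dissipation $\mathcal{D}$ alone controls only the kinetic component, so the missing coercivity in the interaction energy must come from a hypocoercive correction. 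I would introduce a cross term
\[
\mathcal{I}(t) := \int \tilde{\rho}\,\tilde{u}\,\partial_x\bigl(W\star(\tilde{\rho}-\rho_\infty)\bigr)\,dx
\]
and study $\mathcal{H} := \mathcal{L} + \varepsilon\mathcal{I}$ for small $\varepsilon > 0$. Differentiating $\mathcal{I}$ along the flow and exploiting the Euler--Lagrange identity $\partial_x(W\star\rho_\infty) = 0$ on $\mathrm{supp}(\rho_\infty)$, the leading contribution is $-\int \tilde{\rho}|\partial_x(W\star(\tilde\rho-\rho_\infty))|^2\,dx$, which supplies the missing coercivity, while the remaining cross terms are absorbed by Cauchy--Schwarz against $\mathcal{D}$.

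In case (i), the uniform upper bound on $\phi$ controls all error terms directly by $\mathcal{D}$, giving $\dot{\mathcal{H}} \le -c\,\mathcal{H}$ and thus exponential decay of $\mathcal{H}$; the conversion from this $L^2$-type decay to the stated $d_\infty$- and $L^\infty$-rates is then obtained via a support-convergence argument along one-dimensional characteristics, where closeness of characteristic flows propagates from the interior to the boundary of $\mathrm{supp}(\tilde{\rho}(t))$. In case (ii), the $|x|^{-\alpha}$-singularity of $\phi$ near zero prevents a uniform upper bound, so the error terms must instead be estimated by weighted Cauchy--Schwarz, trading the singular weight against moments of $\tilde{\rho}$ and the quartic coercivity; the outcome is a nonlinear differential inequality $\dot{\mathcal{H}} \le -c\,\mathcal{H}^{1+\beta(\alpha)}$ whose integration gives algebraic decay of $\mathcal{H}$, and, after using $\mathcal{L} \gtrsim d_2^4$ together with tracking the $(1-\alpha)$-gap between interaction-energy and velocity relaxation induced by the singular weight, reproduces the stated $d_2$- and $L^2(\rho)$-rates. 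The principal obstacle is precisely this lack of uniform convexity of $W$: the purely repulsive Coulomb component prevents quadratic $d_2$-coercivity of the relative energy, forces us to rely on the dual $\dot H^{-1}$ formulation with its quartic loss, and in the singular-kernel case makes it delicate to balance this weak coercivity against the singular-weight error terms to extract the correct algebraic exponent and the $(1-\alpha)$-factor separating density convergence from velocity relaxation.
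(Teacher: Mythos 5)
Your proposal is a genuinely different route from the paper's, but it has a concrete gap that would degrade the rates. The paper works in Lagrangian coordinates, introducing the perturbation variables $\tilde\eta(t,x)=\eta(t,x)-\eta_\infty(t,x)$ and a modified velocity $\tilde\omega$; the special structure of the Coulomb--quadratic potential gives the \emph{exact} relation $\partial_t\tilde\omega=-\tilde\eta$, a clean harmonic oscillator, so that the ``interaction-energy'' part of the Lyapunov functional is literally $\tfrac12\iint|\tilde\eta(x)-\tilde\eta(y)|^2\rho_0(x)\rho_0(y)\,dx\,dy$. By Lemma~\ref{lem:Wst}, this quantity bounds $d_2^2(\rho(t),\rho_\infty(t))$ from above with a fixed constant: the coercivity of the Lagrangian energy in $d_2$ is \emph{quadratic}. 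Your Eulerian modulated energy, built from the genuine relative interaction energy $\mathcal{E}(\tilde\rho)-\mathcal{E}(\rho_\infty)\simeq\|F_{\tilde\rho}-F_{\rho_\infty}\|_{L^2}^2$, is not quantitatively equivalent to $d_2^2$ — as you yourself note, for compactly supported $\tilde\rho$ without a density upper bound you only get $\|F_{\tilde\rho}-F_{\rho_\infty}\|_{L^2}^2\gtrsim d_2^4$, with no improvement in general (e.g.\ a small atom near the boundary of the support saturates the quartic bound). Consequently, a decay $\mathcal{H}(t)=\mathcal{O}(t^{-\gamma})$ only gives $d_2=\mathcal{O}(t^{-\gamma/4})$, whereas the theorem's rate $d_2=\mathcal{O}(t^{-(3/(4\alpha)-1/2)})$ follows from the quadratic coercivity of the Lagrangian energy combined with the rate $\mathcal{O}(t^{-(3/(2\alpha)-1)})$ for that energy. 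To reproduce the stated rate with your quartic coercivity you would have to prove twice-as-fast decay of your energy, which the structure of your differential inequality does not supply; so the claim at the end of your sketch that the stated rates are ``reproduced'' is not justified. This is precisely the point where the paper's change of variables to $(\tilde\eta,\tilde\omega)$ buys something real: it converts the weak $\dot H^{-1}$-type coercivity of the non-uniformly-convex $W$ into an exact $d_2^2$ identity.

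A secondary issue concerns the dissipation machinery. Your cross term $\mathcal{I}=\int\tilde\rho\,\tilde u\,\partial_x(W\star(\tilde\rho-\rho_\infty))\,dx$, differentiated in Eulerian variables, produces a leading term $-\int\tilde\rho|\partial_x(W\star(\tilde\rho-\rho_\infty))|^2\,dx$, which is a weighted $L^2$ norm of the force, not $\mathcal{E}(\tilde\rho)-\mathcal{E}(\rho_\infty)$; bridging these two would require a Poincaré-type inequality that is far from obvious without uniform convexity, and the commutator terms from $\tilde u\,\partial_x\tilde u$ are awkward. The paper avoids all of this by taking cross terms in the Lagrangian variables $X,Y,Z$, where the key inequalities \eqref{est_uw_l2_2}--\eqref{est_uw_l2_4} follow by symmetrization alone, and by treating the singular-weight case via a time-dependent threshold $\delta(t)$ (Lemma~\ref{lem:wsc} and Lemma~\ref{lem:delt:game}), giving a \emph{linear nonautonomous} inequality $\dot{\mathcal{L}}\le-\kappa\delta(t)^{2\alpha}\mathcal{L}+\delta(t)^3$ rather than a genuinely nonlinear $\dot{\mathcal{H}}\le-c\mathcal{H}^{1+\beta}$. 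Your $L^\infty$ upgrade in case (i) is also too vague to assess; the paper does this via a separate pointwise hypocoercivity along characteristics, feeding the already-established $L^2$ decay in as a forcing term. If you want to salvage the Eulerian modulated-energy plan, you would at minimum need to replace $\dot H^{-1}$ coercivity with something that is $d_2$-quadratic — but that is essentially equivalent to passing to the Lagrangian/Brenier-map description, at which point you have rediscovered the paper's coordinates.
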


This theorem shows that the density $\rho(t)$ converges to a traveling flock profile of fixed width, translating with constant velocity $u_\infty$. The limiting state is the minimizer of the interaction energy associated with $W$, up to translation. For bounded communication weights, we obtain exponential convergence, while for weakly singular weights, we derive algebraic upper bounds. The sharpness of these algebraic rates, however, remains open.

%%%%%%%%%%%%%%%%%%%%%%%%%%%%%%%%%%%%%%%%%%%%%%%%%%%%%%%%%%%%%%%%%%%%%%%%%%%%%%%%%5
%
%
%                        
%
%
%%%%%%%%%%%%%%%%%%%%%%%%%%%%%%%%%%%%%%%%%%%%%%%%%%%%%%%%%%%%%%%%%%%%%%%%%%%%%%%%%
\subsubsection{$(\lambda,\Lambda)$-convex potential in multi-dimensional setting}
In our final main result, we extend the one-dimensional analysis with $(\lambda,\Lambda)$-convex potentials to the multi-dimensional setting.  As in the one-dimensional case, the long-time dynamics lead to concentration of the density profile toward a traveling Dirac mass located at the center of mass.  
The essential new feature in higher dimensions is that convexity conditions on the interaction potential must now hold uniformly in all spatial directions. Intuitively, this requirement prevents the potential from becoming flat in some directions, which would otherwise correspond to forces that are too weak or anisotropic to stabilize the dynamics.  

We first recall that \eqref{def:lam} is equivalent to the two-sided inequality
\[
 \frac{\lambda}{2}|x-y|^2 \leq W(x) -W(y)- \lal \nabla W(y), x-y \ral  \le  \frac{\Lambda}{2}|x-y|^2 \quad \text{for all }  x,y \in \R^d
\]
for some $0<\lambda \le \Lambda <+\infty$. In particular, since $\nabla W(0)=0$, we obtain the quadratic lower and upper bounds
\bq\label{W_Lam}
\frac{\lambda}{2}|x|^2 \leq W(x) - W(0) \le \frac{\Lambda}{2}|x|^2 \quad \text{for all } x\in \R^d.
\eq
 These two-sided bounds place $W$ in the class of uniformly convex potentials with controlled growth: convex enough to ensure stability, yet not so steep as to overshadow the alignment effects.  

Within this framework, we establish quantitative convergence results in arbitrary dimensions.

\begin{theorem}\label{thm_3:new}Let $d\geq1$ and let $(\rho,u)$ be a global-in-time classical solution of the system \eqref{main_sys}. Suppose that the interaction potential $W(x)$ satisfies \eqref{def:lam}, and the communication weight $\psi$ satisfies
\bq\label{bdd_psi}
\psi(x) \geq \psi_{\rm m} > 0 \quad \mbox{for all } x \in \R^d.
\eq
Then the following decay estimates hold:\begin{itemize}
\item[(i)]  If $\psi$ is bounded from above, then we have
\[
 {\rm d}_2(\rho(t), \delta_{\eta_c(t)}) + \|u(t,\cdot)-u_\infty\|_{L^2(\rho(t))} \to 0
\]
exponentially fast as $t\to \infty$. 
\item[(ii)] If $\psi$ satisfies
\[
 \psi(x) \le \frac{B}{|x|^{\alpha}}\quad \mbox{for all $0\le |x| \le R$ for some $\alpha \in (0,2)$ and $R>0$}, 
\]
then we have
\[
 {\rm d}_2(\rho(t), \delta_{\eta_c(t)}) + \|u(t,\cdot)-u_\infty\|_{L^2(\rho(t))} =   \mathcal{O} \lt(t^{-\lt(\frac{1}{\alpha} - \frac{1}{2} \rt)} \rt)
\]
as $t \to \infty$.
\end{itemize}
\end{theorem}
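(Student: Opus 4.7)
The plan is to combine a modulated energy with a cross-correlation functional, adapting to higher dimensions the hypocoercivity-type approach underlying Theorem~\ref{thm_2:new}. Momentum conservation gives $\dot\eta_c(t) = u_\infty$, so it suffices to track the decay of
\[
V(t) := \int_{\R^d}|x-\eta_c(t)|^2\rho(t,x)\,dx = {\rm d}_2^2\bigl(\rho(t),\delta_{\eta_c(t)}\bigr), \quad K(t) := \int_{\R^d}|u(t,x)-u_\infty|^2\rho(t,x)\,dx.
\]
By the two-sided bound \eqref{W_Lam}, the modulated energy
\[
\mh(t) := \tfrac{1}{2}K(t) + \tfrac{1}{2}\iint_{\R^d \times \R^d}\bigl[W(x-y)-W(0)\bigr]\rho(t,x)\rho(t,y)\,dxdy
\]
is equivalent to $K+V$ up to constants depending on $\lambda,\Lambda$, and a direct computation using \eqref{main_sys} yields the dissipation identity $\mh'(t) = -\md(t)/2$, where
\[
\md(t) := \iint_{\R^d\times\R^d}\phi(|x-y|)|u(t,x)-u(t,y)|^2\rho(t,x)\rho(t,y)\,dxdy.
\]
Assumption \eqref{bdd_psi} gives $\md\geq 2\psi_{\rm m}K$, dissipating $K$ but not directly controlling $V$.

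To recover decay of $V$, I would introduce the cross-correlation functional
\[
\calC(t) := \int_{\R^d}(x-\eta_c(t))\cdot(u(t,x)-u_\infty)\rho(t,x)\,dx,
\]
which satisfies $V'(t) = 2\calC(t)$ and $|\calC|\leq\sqrt{KV}$. Using the momentum equation and symmetrization in $x$ and $y$, one finds
\[
\calC'(t) = K - \tfrac{1}{2}\iint(x-y)\cdot\nabla W(x-y)\rho(t,x)\rho(t,y)\,dxdy - \tfrac{1}{2}\iint\phi(|x-y|)(x-y)\cdot(u(t,x)-u(t,y))\rho(t,x)\rho(t,y)\,dxdy,
\]
where, by \eqref{def:lam}, the interaction integral is bounded below by $\lambda V$, and the alignment cross-term is controlled via Cauchy--Schwarz by $\tfrac{1}{2}\bigl(\iint\phi|x-y|^2\rho(x)\rho(y)\,dxdy\bigr)^{1/2}\md^{1/2}$. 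The Lyapunov functional $\calL_\varepsilon := \mh + \varepsilon\calC$ is equivalent to $K+V$ for sufficiently small $\varepsilon>0$.

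In case (i), the bound $\phi\leq\psi_{\rm M}$ gives $\iint\phi|x-y|^2\rho(x)\rho(y)\,dxdy \leq 2\psi_{\rm M}V$. Young's inequality applied to $\sqrt{V\md}$, together with $\md\geq 2\psi_{\rm m}K$, allows absorption of all positive terms for $\varepsilon$ small enough, yielding $\calL_\varepsilon'\leq -c(K+V)\leq -c'\calL_\varepsilon$ and hence exponential decay of $V+K$.

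In case (ii), the new ingredient is a Jensen-type estimate: splitting the integral by $\{|x-y|\leq R\}$ and its complement, applying Jensen's inequality to the concave map $r\mapsto r^{2-\alpha}$ on the first piece (valid for $\alpha\in(0,2)$) and monotonicity of $\phi$ on the second, yields $\iint\phi|x-y|^2\rho(x)\rho(y)\,dxdy \lesssim V^{1-\alpha/2}$ (the a priori bound $V\leq \mh(0)/\lambda$ ensures this is the dominant contribution). The cross-alignment is then $\lesssim V^{(2-\alpha)/4}\md^{1/2}$, and Young's inequality gives
\[
\calL_\varepsilon'(t) \leq -c\,\md(t) - \varepsilon\lambda V(t) + C\varepsilon^2 V(t)^{(2-\alpha)/2}.
\]
The main obstacle is that $V^{(2-\alpha)/2}$ dominates $V$ as $V\to 0$, so a constant-$\varepsilon$ choice cannot fully absorb the bad term. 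Closing the argument requires combining the equivalence $\calL_\varepsilon\sim K+V$ with an interpolation (or a stage-wise bootstrap lowering $\varepsilon$ as $V$ decreases) to derive a super-linear differential inequality of the form $\calL_\varepsilon'(t) \leq -c_0\calL_\varepsilon(t)^{2/(2-\alpha)}$. Integration then gives $\calL_\varepsilon(t)\lesssim t^{-(2-\alpha)/\alpha}$, which, after taking square roots, translates to the claimed rate ${\rm d}_2(\rho(t),\delta_{\eta_c(t)})+\|u(t,\cdot)-u_\infty\|_{L^2(\rho(t))}\lesssim t^{-(1/\alpha-1/2)}$. Matching the Young constants so that the resulting exponent is precisely $2/(2-\alpha)$ is the delicate core of the argument.
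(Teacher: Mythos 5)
Your Eulerian framework for case (i) is sound: the modulated energy $\mh$, the cross-correlation $\calC$, the dissipation identity $\mh'=-\md/2$, and the Young-absorption strategy all transcribe the paper's Lagrangian computation faithfully, and for bounded $\phi$ they close to an exponential decay of $V+K$.

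For case (ii), however, there is a genuine gap precisely where you flag the difficulty. After Cauchy--Schwarz on the alignment cross term and Young's inequality you arrive at
\[
\calL_\varepsilon' \;\leq\; -c\,K \;-\; \varepsilon\lambda\, V \;+\; C\varepsilon^2\, V^{(2-\alpha)/2},
\]
and for any fixed $\varepsilon>0$ the positive remainder $C\varepsilon^2 V^{(2-\alpha)/2}$ dominates $-\varepsilon\lambda V$ once $V$ drops below $(C\varepsilon/\lambda)^{2/\alpha}$, so the right-hand side can fail to be negative. Your proposed remedy (an unspecified interpolation, or a stage-wise lowering of $\varepsilon$) is not carried out, and it is not clear it produces the exponent $2/(2-\alpha)$ you target: a formal balance $\varepsilon\sim V^{\alpha/2}$ in the display above combines the two $V$-terms into a single power $V^{(2+\alpha)/2}$ --- a different exponent --- and making any such time-dependent or staged choice rigorous (keeping $\calL_\varepsilon\sim K+V$ uniformly and controlling the contribution of $\dot\varepsilon\,\calC$) is itself a nontrivial argument that the proposal does not supply. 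So the mechanism behind your claimed super-linear ODE is missing.

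The paper bypasses the obstruction entirely by observing that the singular alignment cross term is an exact time derivative and should be incorporated into the Lyapunov functional rather than estimated. In Lagrangian variables one has $\phi(|\eta(x)-\eta(y)|)\langle\eta(x)-\eta(y),v(x)-v(y)\rangle = \partial_t\bigl(\int_0^{|\eta(x)-\eta(y)|}s\,\phi(s)\,ds\bigr)$, so
\[
\iint \phi(|\eta(x)-\eta(y)|)\,\langle \eta(x)-\eta(y),\,v(x)-v(y)\rangle\,\rho_0(x)\rho_0(y)\,dxdy
\;=\; \frac{d}{dt}\iint \Bigl(\int_0^{|\eta(x)-\eta(y)|} s\,\phi(s)\,ds\Bigr)\rho_0(x)\rho_0(y)\,dxdy,
\]
and one adds $\tfrac{\xi}{2}$ times this potential, call it $\mathcal{P}(t)$, to the functional: $\tilde{\me}_\xi := \mh + \xi\,\calC + \tfrac{\xi}{2}\,\mathcal{P}$. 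The troublesome cross term then cancels identically, yielding $\tilde{\me}_\xi' \leq -c\,(K+V)$ with no remainder whatsoever. The singularity of $\phi$ enters only through coercivity: the local bound $\phi(s)\leq B\,s^{-\alpha}$, Jensen's inequality, and the a priori boundedness of $V$ (from monotonicity of $\mh$) give $\mathcal{P}\lesssim V^{(2-\alpha)/2}$, hence $\tilde{\me}_\xi\lesssim(K+V)^{(2-\alpha)/2}$, which is exactly what upgrades the linear decay to the super-linear ODE $\tilde{\me}_\xi'\leq -c_0\,\tilde{\me}_\xi^{2/(2-\alpha)}$ and the stated rate $t^{-(2-\alpha)/\alpha}$ for $K+V$. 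Recognizing the potential structure of the singular alignment cross term is the key idea your proposal is missing; Cauchy--Schwarz destroys exactly the cancellation that makes the argument close.
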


\begin{remark}
Compared with Theorem \ref{thm_1:new} (ii), the assumption on $\psi$ in Theorem \ref{thm_3:new} (ii) allows for more singular kernels in the one-dimensional case.  
Moreover, while as a direct corollary of Theorem \ref{thm_1:new} (ii), we deduce
\[
 {\rm d}_2(\rho(t),\delta_{\eta_c(t)}) = \mathcal{O} \lt(t^{-\frac{1}{\alpha}} \rt),\quad  \|u(t,\cdot)- u_\infty\|_{L^2(\rho(t))}= \mathcal{O}\lt(t^{-\lt(\frac{1}{\alpha}-1\rt)}\rt) 
\]
the estimate of Theorem \ref{thm_3:new} provides an improved $L^2$ decay rate for the velocity field.  
This partially explains why, in the one-dimensional case, only an upper bound for the velocity decay could be obtained. We emphasize that these two results exploit completely different approaches; while Theorem \ref{thm_1:new} crucially takes advantage of the one-dimensional structure of the alignment system, the multi-dimensional result, Theorem \ref{thm_3:new}, makes use of energy estimates.
\end{remark}

\begin{remark}
In the special case of quadratic confinement $W(x)=\frac{\lambda}{2}|x|^2$ ($\lambda>0$), as mentioned before, the Euler--alignment dynamics reduce to an alignment system with a quadratic external potential. In that setting, for admissible communication weights with ``thin tails'' \eqref{thin}, it was shown in \cite[Lemma 2.1]{ST2020a} that the support diameter $D(t):={\rm diam}({\rm supp}\,\rho(t))$ remains uniformly bounded in time.  
As a consequence, we obtain a uniform lower bound on the communication weight along the flow,
\[
\psi(r)\ge \psi(D(t))\ge \psi_{\min}>0 \quad \text{for all } t\ge0,
\]
which verifies condition \eqref{bdd_psi}.  
Under this positivity, exponential flocking follows: $L^2$-flocking with an explicit rate, together with an $L^\infty$ counterpart, was established in \cite[Theorems 2.3 and 2.5]{ST2020a}.  

We emphasize that this mechanism relies essentially on the quadratic structure (via the harmonic-oscillator invariance) and is not available for general uniformly convex potentials. For $(\lambda,\Lambda)$-convex potentials beyond the quadratic case, \cite[Theorem~3.6]{ST2020a} obtained only algebraic $L^2$ decay, under the additional assumption that $\psi(0)$ is sufficiently large in terms of $\lambda$ and $\Lambda$ (with $\psi$ bounded). In contrast, Theorem~\ref{thm_3:new}(i) shows that exponential $L^2$ decay persists for all $(\lambda,\Lambda)$-convex potentials under the sole assumption \eqref{bdd_psi}, without any further tail or size condition on $\psi$.  The extension of the $L^\infty$ convergence beyond the quadratic case, however, remains open.

We also note that in \cite{ST2021b}, an anticipated Euler--alignment model was studied, where interactions are taken at anticipated positions $x^\tau = x + \tau u$. The system was recast into a generalized formulation with a matrix-valued communication kernel, and sub-exponential $L^2$-energy decay was proved under assumptions allowing decay of either $\psi$ or $D^2 W$ at infinity.
\end{remark}

%%%%%%%%%%%%%%%%%%%%%%%%%%%%%%
%
%
%
%
%
%
%
%
%
%%%%%%%%%%%%%%%%%%%%%%%%%%%%%%
\subsection{Organization of the paper}
The remainder of the paper is organized as follows. In Section \ref{sec:pre}, we collect preliminary properties of our main system. We begin with the conservation laws and the associated energy dissipation, and then reformulate the system in Lagrangian coordinates with suitable auxiliary variables. We also establish a Lagrangian--Eulerian correspondence via Wasserstein estimates, which serves as the link between characteristic estimates and Eulerian stability, and finally derive several Lipschitz-type estimates for the primitive of the communication weight, which will be used repeatedly in later sections. Section \ref{sec:ub} establishes uniform-in-time bounds for the diameter of the density support. Via the Lagrangian reformulation (Section \ref{sec:pre}), this reduces to controlling the maximal separation between characteristics. In one dimension, $\lambda$-convex interaction potentials enforce contraction of both the spatial (flow) and velocity diameters, whereas for the Coulomb--quadratic potential, we still obtain a uniform bound on the spatial diameter despite the loss of convexity. Notably, no upper bound on $W'$ is required to get the decay in the $\lambda$-convex case. Section \ref{sec:m1} is devoted to the one-dimensional dynamics under a $(\lambda,\Lambda)$-convex interaction potential. We derive a reduced system of ordinary differential inequalities for pairwise deviations of the characteristic variables and prove quantitative decay rates of the flow and velocity diameters: exponential in the case of bounded communication weights and algebraic in the weakly singular case. In Section \ref{sec:m2}, we address the one-dimensional Coulomb potential with quadratic confinement, which lies beyond the $\lambda$-convex framework. By introducing perturbation variables around the asymptotic profile, we establish dissipation estimates and show convergence to flocking states, again distinguishing between exponential and algebraic decay according to the type of communication weight. Section \ref{sec:m3} extends the analysis to the multidimensional setting with $\lambda$-convex interaction potentials. Using a hypocoercivity-type strategy, we prove exponential decay of $L^2$ deviations when the communication weight is bounded, and algebraic decay in the presence of singular weights. 
%%%%%%%%%%%%%%%%%%%%%%%%%%%%%%
%
%
%
%
%
%
%
%
%
%%%%%%%%%%%%%%%%%%%%%%%%%%%%%%

\section{Preliminaries}\label{sec:pre}

%%%%%%%%%%%%%%%%%%%%%%%%%%%%%%
%
%
%
%
%
%
%
%
%
%%%%%%%%%%%%%%%%%%%%%%%%%%%%%%
\subsection{Conservation laws and energy dissipation}
In this part,  we recall some fundamental physical properties of system \eqref{main_sys}. These basic identities highlight the conservative and dissipative mechanisms that govern the dynamics and will serve as the starting point for the large-time analysis.

First, the mass and momentum are conserved along the flow:
\[
\frac{d}{dt}\int_{\R^d} \rho(t,x)\,dx= 0,\quad \frac{d}{dt}\int_{\R^d} \rho(t,x)u(t,x)\,dx = 0.
\]
The center of mass of the system is defined by
\[
\eta_c(t) :=\int_{\R^d} x \rho(t,x)\,dx.
\]
Differentiating twice in time yields
\[
\frac{d^2}{dt^2}\eta_c(t) = \frac{d}{dt}\int_{\R^d} x\rho(t,x)\,dx = 0,
\]
and thus $\eta_c(t)$ evolves linearly. More explicitly, 
\bq\label{eq:eta:att}
\eta_c(t) = \int_{\R^d} x \rho_0(x)\,dx + t u_\infty,\quad u_\infty:=\int_{\R^d}u_0(x) \rho_0(x) \,dx.
\eq
This identity shows that the entire configuration drifts rigidly with the mean velocity $u_\infty$, while the interplay of interaction and alignment determines the relative positions of agents.

In addition to these conservation laws, the system possesses a natural energy-dissipation structure. Consider the combination of the kinetic fluctuations and the interaction energy:
\begin{align}\label{eq:diss}
\begin{aligned}
&\frac{d}{dt} \lt( \frac12\iint_{\R^d \times \R^d} |u(t,x) - u(t,y)|^2 \rho(t,x) \rho(t,y)\,dxdy + \iint_{\R^d \times \R^d} W(x-y) \rho(t,x) \rho(t,y)\,dxdy\rt) \cr
&\qquad = - \iint_{\R^d \times \R^d} \psi(x-y) |u(t,x) - u(t,y)|^2 \rho(t,x) \rho(t,y)\,dxdy \le 0.
\end{aligned}
\end{align}

Thus, the total energy decreases monotonically in time, with the alignment kernel $\psi$ being solely responsible for the dissipation. In particular, the quadratic velocity differences are damped at a rate determined by $\psi$, while the potential $W$ accounts for the conservative part of the dynamics. In the case $W\equiv 0$, once a uniform bound on the support diameter is available, the monotonicity of $\psi$ yields a positive lower bound, and the dissipation inequality \eqref{eq:diss} directly leads to exponential velocity alignment. When $W\neq 0$, the interaction energy must be controlled separately: under convexity assumptions, $W$ provides spatial coercivity, while the alignment term dissipates velocity fluctuations. In this situation, convergence estimates are typically obtained through hypocoercivity-type arguments \cite{Vil09}, which combine conservative and dissipative structures into a unified framework. This general structure, balancing conservation laws with dissipative alignment, will serve as the foundation of our large-time analysis.

%%%%%%%%%%%%%%%%%%%%%%%%%%%%%%
%
%
%
%
%
%
%
%
%
%%%%%%%%%%%%%%%%%%%%%%%%%%%%%%
\subsection{Lagrangian reformulation and auxiliary variables}\label{sec:lagr}
For the analysis of our system, it is advantageous to switch from the Eulerian to the Lagrangian description. Since there is no pressure term in \eqref{main_sys}, the dynamics along characteristics can be represented as a closed system for the flow map and velocity, where the evolution reduces to integro-differential relations in time. This structure is considerably simpler than the original Eulerian PDE, as the transport part is absorbed into the characteristic flow, and the nonlocal interactions appear explicitly in terms of the initial distribution $\rho_0$. Moreover, in the one-dimensional setting, the alignment operator admits a further simplification \cite{CCTT16,CCZ16,HKPZ19, ZZ20, CZ21}: it can be expressed through the primitive $\Psi$ of the weight $\psi$, which enables the introduction of auxiliary variables and leads to more explicit formulas for the asymptotic dynamics.

For the proofs of our main results, we make use of the Lagrangian flows associated with \eqref{main_sys}. We briefly explain the Lagrangian formulation below. 

\medskip
\noindent {\it General Lagrangian formulation.} We begin by defining the forward characteristics $\eta(t,x)$ for $x\in {\rm supp}(\rho_0)$ through
\[
\pa_t \eta(t,x) = u(t,\eta(t,x)) =: v(t,x), \quad \eta(x,0) = x.
\]
In terms of $(\eta,v)$, the system \eqref{main_sys} can be rewritten as
\begin{align}\label{main_sys2}
\begin{aligned}
&\pa_t \eta(t,x) = v(t,x),  \cr
&\pa_t v(t,x) = - (\nabla W \star \rho) (t,\eta(t,x)) - \int_{\R^d} \psi(\eta(t,x)-y)(v(t,x) - u(t,y))\rho(t,y)\,dy.
\end{aligned}
\end{align}
Since the flow map $\eta(t,\cdot)$ transports the initial distribution $\rho_0$, the density $\rho(t)$ at time $t$ is given as the pushforward of $\rho_0$ under $\eta$, namely $\rho(t) = \eta(t,\cdot) \# \rho_0$. That is, for every test function $\varphi\in C_c(\R^d)$ we have
\[
\int_{\R^d} \varphi(x) \rho(t,x)\,dx = \int_{\R^d} \varphi(\eta(t,x))\rho_0(x)\,dx.
\]
This identity represents the mass conservation of the transport equation and provides a direct way to rewrite all convolution terms in terms of the initial density $\rho_0$. In particular,
\[
(\nabla W \star \rho) (t,\eta(t,x)) = \int_{\R^d} \nabla W(\eta(t,x) - \eta(t,y))\rho_0(y)\,dy
\]
and
\[
\int_{\R^d} \psi(\eta(t,x)-y)(v(t,x) - u(t,y))\rho(t,y)\,dy = \int_{\R^d} \psi(\eta(t,x)-\eta(t,y))(v(t,x) - v(t,y))\rho_0(y)\,dy.
\]

\medskip
\noindent {\it Auxiliary variable and simplified structure in 1D.} In the one-dimensional setting, the structure of the alignment term simplifies considerably. Indeed, we verify that
\[
\int_\R \psi(\eta(t,x)-\eta(t,y))(v(t,x) - v(t,y))\rho_0(y)\,dy = \frac{\pa}{\pa t}\int_\R \Psi(\eta(t,x)-\eta(t,y))\rho_0(y)\,dy,
\]
where $\Psi$ denotes a primitive of $\psi$, that is,
\[
\Psi(r) = \int_0^r \psi(s)\,ds.
\]
This observation suggests that the cumulative effect of alignment can be absorbed into a new variable. We therefore introduce the auxiliary Lagrangian quantity
\bq\label{def:om}
\omega(t,x) := v(t,x) + \int_\R \Psi(\eta(t,x)-\eta(t,y))\rho_0(y)\,dy,
\eq
which can be regarded as the modified velocity along the characteristic. With this definition, the system
\begin{align}\label{main_sys3}
\begin{aligned}
&\pa_t \eta(t,x) = \omega(t,x) - \int_\R \Psi(\eta(t,x)-\eta(t,y))\rho_0(y)\,dy,\cr
&\pa_t \omega(t,x) =  - \int_\R \pa_x W(\eta(t,x) - \eta(t,y))\rho_0(y)\,dy.
\end{aligned}
\end{align}
This reformulation highlights the advantage of working in Lagrangian coordinates: the velocity equation reduces to a purely nonlocal forcing term, while the alignment contribution has been absorbed into the auxiliary variable $\omega$.

\medskip
\noindent{\it Asymptotic density.}
Depending on the interaction potential $W$, the asymptotic density profile $\rho_\infty$ and the corresponding asymptotic Lagrangian map $\eta_\infty$ are defined differently. We remind the reader that the asymptotic density profile defined in the introduction is given by
\begin{align}\label{asymprho}
\rho_\infty(t,x) = \begin{cases}
    \delta_{\eta_c(t)}(x) \quad &\text{if }W(x)\text{ is $\lambda$-convex,}\\[3mm]
    \frac{1}{2}1_{\lt\{\eta_c(t)-1 \le x \le \eta_c(t)+1\rt\}}\quad &\text{if }W(x)=-|x| + \frac{x^2}2 \text{ when $d=1$.}
\end{cases}
\end{align}

\medskip
\noindent{\it Case of $\lambda$-convex potentials.}
If $W$ is a $\lambda$-convex potential with $\lambda>0$ (Sections \ref{sec:m1} and \ref{sec:m3}), we set
\[
\eta_\infty(t,x) := \eta_c(t),
\]
where $\eta_c(t)$ denotes the center of mass defined in \eqref{eq:eta:att}. 
In this case, the limiting density is simply a Dirac mass located at $\eta_c(t)$:
\[
\rho_\infty(t) = \eta_\infty(t,\cdot)\#\rho_0 = \delta_{\eta_c(t)}.
\]
Thus both the time-dependent density $\rho(t)$ and its limiting profile $\rho_\infty(t)$ can be written as push-forwards of $\rho_0$:
\[
\rho(t) = \eta(t,\cdot)\#\rho_0, \quad \rho_\infty(t) = \eta_\infty(t,\cdot)\#\rho_0.
\]
Introducing 
\[
\tilde\eta(t,x):=\eta(t,x)-\eta_\infty(t,x), 
\]
we note that
\[
\int_{\R^d} \tilde\eta(t,x)\,\rho_0(x)\,dx = 0.
\]
In the one-dimensional case and with these definitions, the evolution equations take the form
\[
\begin{aligned}
\pa_t \tilde \eta(x)  & = \omega(x)  - \int_\R \Psi(\eta(x)-\eta(y)) \rho_0(y)\,dy - u_\infty, \cr
\pa_t \omega(x)   & =  - \int_\R \pa_x W(\eta(t,x) - \eta(t,y))\rho_0(y)\,dy.
\end{aligned}
\]

\medskip
\noindent {\it Case of 1D Coulomb potential with quadratic confinement.} We now specialize to the case
\[
W(x) = -|x| + \frac12 x^2,
\]
which combines the repulsive Coulomb potential with a quadratic confinement. In this case, we set
\[
\eta_\infty(t,x) := \eta_c(t) + \eta_r(x),
\]
where $\eta_c(t)$ denotes the center of mass defined in \eqref{eq:eta:att} and $\eta_r(x)$ satisfies
\[
\frac{1}{2}1_{\{-1\le x \le 1\}} = \eta_r(t,x) \# \rho_0.
\]
Here $\eta_c(t)$ describes the drift of the center of mass, while $\eta_r(x)$ represents the stationary reference profile determined solely by the initial distribution $\rho_0$.
Indeed, we have the explicit formula
\[
\eta_r(x)= 2\int_{-\infty}^x \rho_0(y)\,dy -1.
\]
Combining this identity with the center-of-mass trajectory \eqref{eq:eta:att}, we obtain the explicit representation
\begin{align*}
\begin{aligned}
\eta_\infty(t,x) &= \lt( \int_\R y\rho_0(y)\,dy +  t\int_\R u_0(y)\rho_0(y)\,dy\rt) + \lt(2\int_{-\infty}^x \rho_0(y)\,dy - 1 \rt) .
\end{aligned}
\end{align*}

As in the $\lambda$-convex potential case, both the time-dependent density $\rho(t)$ and its limiting profile $\rho_\infty(t)$ can be written as push-forwards of $\rho_0$:
\[
\rho(t) = \eta(t,\cdot)\#\rho_0, \quad \rho_\infty(t) = \eta_\infty(t,\cdot)\#\rho_0.
\]
Denoting by $
\tilde\eta(t,x):=\eta(t,x)-\eta_\infty(t,x), 
$
we find that
\[
\int_{\R^d} \tilde\eta(t,x)\,\rho_0(x)\,dx = 0.
\]
In this setting we compute
$$\begin{aligned}
\int_\R \pa_x W(\eta(x) - \eta(y))\rho_0(y)\,dy 
& = -\int_\R {\rm sgn}(\eta(x) - \eta(y))\rho_0(y)\,dy + \int_\R (\eta(x) - \eta(y))\rho_0(y)\,dy\cr
&\quad = \eta(x) - \int_\R {\rm sgn}(x-y)\rho_0(y)\,dy - \int_\R \eta(y)\rho_0(y)\,dy\cr
&\quad = \eta(x) - \eta_\infty(x).
\end{aligned}$$
Here we used the fact that
\bq\label{sgn}
{\rm sgn}(\eta(x) - \eta(y)) = {\rm sgn}(x-y), \quad x,y \in {\rm supp}(\rho_0),
\eq
which follows from the order-preserving property of characteristics.  Moreover, we find
\[
\int_\R {\rm sgn}(x-y)\rho_0(y)\,dy = \int_{-\infty}^x \rho_0(y)\,dy - \int_x^\infty\rho_0(y)\,dy = 2\int_{-\infty}^x \rho_0(y)\,dy - 1.
\]
 Differentiating yields
\bq\label{eq:uinfty:new}
\pa_t \eta_\infty(t,x) = \pa_t \eta_c(t) = \int_\R u_0(y)\rho_0(y)\,dy =:u_\infty
\eq
and
\[
\pa_x \eta_\infty(t,x) = \pa_x \eta_r(x) = 2\rho_0(x).
\]
In particular, $u_\infty$ coincides with the conserved mean velocity introduced earlier in \eqref{eq:uinfty}, confirming the consistency of this representation.

It is therefore convenient to describe the dynamics in terms of fluctuations around the asymptotic profile $\eta_\infty$. To this end, we set
\[
\tilde \eta := \eta - \eta_\infty
\]
and introduce the modified velocity
\bq\label{eq:tom}
\tilde \omega(x) := v(x) + \int_\R (\Psi(\eta(x)-\eta(y)) - \Psi(\eta_\infty(x)-\eta_\infty(y)))\rho_0(y)\,dy.
\eq
With these definitions, the evolution equations take the form
\[
\begin{aligned}
\pa_t \tilde \eta(x)  & = \tilde\omega(x)  - \int_\R (\Psi(\eta(x)-\eta(y)) - \Psi(\eta_\infty(x)-\eta_\infty(y)))\rho_0(y)\,dy - u_\infty, \cr
\pa_t \tilde \omega(x)   & = - \tilde \eta(x).
\end{aligned}
\]
This formulation shows that, in the Coulomb-quadratic case, the variable $\tilde\omega$ evolves according to a harmonic-type relation with $\tilde\eta$, while the nonlocal alignment effect is expressed through the difference of the $\Psi$-terms. Such a reformulation will be essential for analyzing the long-time asymptotics.

Summarizing the discussions above, the asymptotic Lagrangian map $\eta_\infty$ is given as
\begin{align}\label{asymp}
\eta_\infty(t,x) = \begin{cases}
    \eta_c(t) \quad &\text{if }W(x) \text{ is $\lambda$-convex,}\\[3mm]
    \eta_c(t) + 2\displaystyle\int_{-\infty}^x \rho_0(y)\,dy -1 \quad &\text{if }W(x)=-|x| + \frac{x^2}2 \text{when $d=1$}.
    \end{cases}
\end{align}

Finally, in order to quantify the maximal pairwise deviations of the characteristic variables, we introduce a unified notation for diameters. For any function $h: [0,\infty) \times {\rm supp}(\rho_0)\to\R^d$, we set
\[
D_h(t): = \sup_{x,y \in {\rm supp}(\rho_0)}|h(t,x) - h(t,y)|.
\]
In particular, $D_\eta(t)$, $D_\omega(t)$, and $D_v(t)$ denote the diameters associated with the flow map $\eta$, the auxiliary variable $\omega$, and the velocity $v$, respectively. 
By construction, $D_\eta(t)$ coincides with the diameter of the density support ${\rm supp}(\rho(t))$, since $\rho(t) = \eta(t,\cdot)\#\rho_0$ and $\eta(t,\cdot)$ is a measure-preserving bijection between 
${\rm supp}(\rho_0)$ and ${\rm supp}(\rho(t))$. Likewise, $D_v(t)$ represents the maximal oscillation of the Eulerian velocity $u(t,\cdot)$ over ${\rm supp}(\rho(t))$, and $D_\omega(t)$ admits a similar interpretation for the auxiliary field. More generally, for any such variable, we have
\[
D_h(t) = \sup_{x,y \in {\rm supp}(\rho(t))} | h(t,\eta^{-1}(t,x)) - h(t,\eta^{-1}(t,y))|.
\]
These quantities thus provide convenient measures of spatial spread and velocity fluctuations. 
Their decay will play a central role in the large-time analysis.

%%%%%%%%%%%%%%%%%%%%%%%%%%%%%%%%%%%%
%
%
%
%
%
%
%
%%%%%%%%%%%%%%%%%%%%%%%%%%%%%%%%%%%%

\subsection{Lagrangian--Eulerian correspondence via Wasserstein estimates} 
In the previous subsections, we introduced the conservative and dissipative structures of the system, together with its Lagrangian reformulation and auxiliary variables. 
It remains to clarify how estimates obtained for the Lagrangian flow map $\eta(t,\cdot)$ can be transferred into the Eulerian framework, where the density is given by $\rho(t)=\eta(t,\cdot)\#\rho_0$. For this purpose, we first characterize the asymptotic Eulerian density corresponding to each type of interaction potential $W$, and then recall a standard estimate that relates push-forward measures to Wasserstein distances. 
This Lagrangian--Eulerian correspondence will serve as the bridge between the dynamical estimates obtained for characteristic variables and the Eulerian stability statements in our main theorems.

\medskip
\noindent{\it Comparison via Wasserstein distance.}
The following lemma provides a general way to compare push-forwards of the same measure. It will allow us to transfer decay estimates for $\tilde\eta$ into quantitative convergence results for $\rho(t)$ in Wasserstein distances.

\begin{lemma}\label{lem:Wst}
Let $\rho_0 \in \mathcal{P}(\R^d)$ and let $T_1, T_2:\R^d\to\R^d$ be Borel measurable maps. Set
\[
\rho_1:=T_1\#\rho_0, \quad \rho_2:=T_2\#\rho_0, \quad \mbox{and} \quad \tilde T :=T_1 - T_2.
\]
Then the following bounds hold:
\begin{itemize}
\item[(i)] For $p \in [1,\infty)$, if $\tilde T \in L^p(\rho_0)$, then
\[
 {\rm d}_p(\rho_1,\rho_2) \leq \lt(\int_{\R^d} |\tilde T(x)|^p\, \rho_0(dx) \rt)^{\frac1p}
\]
and for $p = \infty$
\[
 {\rm d}_\infty(\rho_1,\rho_2) \leq \|\tilde T(x)\|_{L^\infty(\rho_0)}
\]
In particular, if
\[
\int_{\R^d} \tilde T(x) \,\rho_0(dx) =0,
\]
then 
\[
 {\rm d}_2^2(\rho_1, \rho_2) \leq \frac12 \iint_{\R^d \times \R^d} |\tilde T(x) - \tilde T(y)|^2 \,\rho_0(dx)\rho_0(dy).
\]
\item[(ii)] Assume that $T_2 \equiv x_c \in \R^d$ is constant with 
\[
x_c = \int_{\R^d} T_1(x)\, \rho_0(dx),
\]
and define the (essential) diameter
\[
D_{T_1} := \esssup_{x,y \sim \rho_0} |T_1(x) - T_1(y)|. 
\]
If $D_{T_1} < \infty$, then we have
\[
\frac12 D_{T_1} \leq  {\rm d}_\infty(\rho_1, \delta_{x_c}) = \esssup_{x}|T_1(x) - x_c| \leq D_{T_1}.
\]
In particular, $ {\rm d}_\infty(\rho_1, \delta_{x_c})$ is quantitatively equivalent to the diameter of $T_1({\rm supp}(\rho_0))$.
\end{itemize}
\end{lemma}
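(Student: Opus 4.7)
The key observation is that both measures $\rho_1$ and $\rho_2$ are push-forwards of the \emph{same} base measure $\rho_0$, so the map $x \mapsto (T_1(x), T_2(x))$ naturally produces a coupling between them. The whole argument reduces to choosing this coupling and reading off the definitions.

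For part (i), I would set $\pi := (T_1, T_2)\#\rho_0 \in \Pi(\rho_1, \rho_2)$ and plug it into the definition of the Wasserstein distance. For $p \in [1,\infty)$ this immediately gives
\[
 {\rm d}_p^p(\rho_1,\rho_2) \leq \iint_{\R^d \times \R^d} |x-y|^p\,d\pi(x,y) = \int_{\R^d} |T_1(x) - T_2(x)|^p \,\rho_0(dx) = \int_{\R^d} |\tilde T(x)|^p \,\rho_0(dx),
\]
and the $p=\infty$ statement follows by the analogous substitution together with the fact that $\supp \pi \subseteq \{(T_1(x),T_2(x)) : x \in \supp \rho_0\}$. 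For the improved bound under the zero-mean condition, I would use the identity
\[
\iint_{\R^d \times \R^d} |\tilde T(x) - \tilde T(y)|^2 \,\rho_0(dx)\rho_0(dy) = 2\int_{\R^d} |\tilde T(x)|^2\,\rho_0(dx) - 2\Bigl|\int_{\R^d} \tilde T(x)\,\rho_0(dx)\Bigr|^2,
\]
which, together with the vanishing mean hypothesis, gives $\int |\tilde T|^2\,\rho_0(dx) = \tfrac12 \iint |\tilde T(x)-\tilde T(y)|^2\,\rho_0(dx)\rho_0(dy)$, and then the previous Wasserstein bound for $p=2$ finishes this case.

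For part (ii), since $\rho_2 = \delta_{x_c}$, the \emph{only} coupling in $\Pi(\rho_1, \delta_{x_c})$ is $\pi = (T_1(x), x_c)\#\rho_0$, which turns the infimum in the definition of $ {\rm d}_\infty$ into an equality: $ {\rm d}_\infty(\rho_1,\delta_{x_c}) = \esssup_{x \sim \rho_0}|T_1(x) - x_c|$. For the upper bound, I would rewrite $x_c = \int T_1\,d\rho_0$ as an average and apply Jensen's inequality,
\[
|T_1(x) - x_c| = \Bigl|\int_{\R^d} \bigl(T_1(x) - T_1(y)\bigr)\rho_0(dy)\Bigr| \leq \int_{\R^d} |T_1(x) - T_1(y)|\,\rho_0(dy) \leq D_{T_1},
\]
which gives $ {\rm d}_\infty(\rho_1, \delta_{x_c}) \leq D_{T_1}$. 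For the matching lower bound, I would use the triangle inequality: for any $x,y \in \supp \rho_0$,
\[
|T_1(x) - T_1(y)| \leq |T_1(x) - x_c| + |T_1(y) - x_c| \leq 2\,\esssup_{z \sim \rho_0}|T_1(z) - x_c|,
\]
and taking the essential supremum in $(x,y)$ yields $\tfrac12 D_{T_1} \leq  {\rm d}_\infty(\rho_1, \delta_{x_c})$.

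\textbf{Expected obstacles.} There is no substantive obstacle here; this is a standard coupling argument. The only point requiring minor care is the measurability of $(T_1,T_2)$ and the verification that $\pi = (T_1,T_2)\#\rho_0$ indeed has the correct marginals, which is immediate from the definition of push-forward, and the handling of ``essential'' suprema versus genuine suprema on $\supp \rho_0$, which plays no role beyond bookkeeping.
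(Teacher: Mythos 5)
Your proof is correct and follows essentially the same route as the paper: the coupling $\pi=(T_1,T_2)\#\rho_0$ for part (i), the polarization identity to handle the zero-mean refinement, and the triangle inequality for the lower bound in part (ii). The only cosmetic differences are that you treat $p=\infty$ directly via the support of $\pi$ where the paper passes to the limit $p\to\infty$, and for the upper bound in (ii) you apply Jensen's inequality to $x_c=\int T_1\,d\rho_0$ rather than invoking $x_c\in\mathrm{conv}(T_1(\supp\rho_0))$ as the paper does -- both are equally valid.
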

\begin{proof}
(i) Let $\pi=(T_1, T_2)\#\rho_0$. Then $\pi \in \Pi(\rho_1, \rho_2)$. Thus, for $1 \leq p < \infty$, we have
\[
 {\rm d}_p^p(\rho_1,\rho_2)  \leq \int_{\R^d} |T_1(x)-T_2(x)|^p\,\rho_0(dx),
\]
which yields the assertion after taking $p$-th roots. The case $p=\infty$ follows by taking the limit $p\to\infty$.

We also find for $p=2$
\begin{align*}
\int_{\R^d} |\tilde T(x)|^2\, \rho_0(dx) &= \frac12 \iint_{\R^d \times \R^d} \lt(|\tilde T(x)|^2 + |\tilde T(y)|^2 \rt) \rho_0(dx) \rho_0(dy) \cr
&=\frac12 \iint_{\R^d \times \R^d} |\tilde T(x) - \tilde T(y)|^2 \, \rho_0(dx) \rho_0(dy) + \lt|\int_{\R^d} \tilde T(x)\,\rho_0(dx) \rt|^2.
\end{align*}
Thus, if $\int_{\R^d} \tilde T(x)\,\rho_0(dx) = 0$, we get the symmetric form. 

\medskip
\noindent (ii) We first notice that
\[
x_c = \int_{\R^d} T_1(x)\, \rho_0(dx) = \int_{\R^d} x \rho(dx)
\]
and ${\rm supp}(\rho) \subset T_1({\rm supp}(\rho_0))=:A$. 

For the upper bound, since $x_c \in {\rm conv} (A)$, for every $z \in A$, we have $|z-x_c| \leq \sup_{z,z' \in A}|z-z'|$. This implies
\[
 {\rm d}_\infty(\rho_1, \delta_{x_c})   \leq D_{T_1}.
\]
Next, by the triangle inequality, for all $x,y \in \R^d$, we get
\[
|T_1(x) - T_1(y)| \leq |T_1(x) - x_c| + |T_1(y) - x_c|.
\]
Taking essential suprema on both sides yields
\[
D_T \leq 2 \esssup_{x \sim \rho_0} |T_1(x) - x_c|.
\]
Since $x_c \in {\rm conv}(A)$, this further gives
\[
D_T \leq 2  {\rm d}_\infty(\rho_1, \delta_{x_c}).
\]
This completes the proof.
\end{proof}

%%%%%%%%%%%%%%%%%%%%%%%%%%%%%%
%
%
%
%
%
%
%
%
%
%%%%%%%%%%%%%%%%%%%%%%%%%%%%%%
\subsection{Lipschitz-type estimates for the primitive $\Psi$}

In this subsection, we collect several Lipschitz-type estimates for the primitive function $\Psi$, which will play a key role in controlling alignment terms in the Lagrangian one-dimensional formulation. Recall that  
\[
\Psi(r) = \int_0^r \psi(s)\,ds,
\]
where $\psi$ is nonnegative, radially decreasing, and locally integrable. Throughout this subsection, we omit the explicit time dependence of $\eta(t,\cdot)$ for notational convenience.

The following results quantify the increments of $\Psi$ both from below and from above, and provide scale-adapted Lipschitz bounds depending on the diameter of the one-dimensional flow map $\eta(t,x)$ and the structure of $\psi$.

\begin{lemma}\label{lem:Psi}
For each $x,y,z \in {\rm supp}(\rho_0)$ with $x>y$, we have
\bq\label{eq:Psi:lb}
\Psi(\eta(t,x)-\eta(t,z)) - \Psi(\eta(t,y)-\eta(t,z)) \ge \psi(D_\eta(t))( \eta(t,x) - \eta(t,y))
\eq
and
\bq\label{eq:Psi:ub}
\Psi(\eta(t,x) -\eta(t,z)) - \Psi(\eta(t,y)- \eta(t,z)) \le 2 \Psi\lt(\frac{1}{2}(\eta(t,x)-\eta(t,y)) \rt).
\eq
\end{lemma}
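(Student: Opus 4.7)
The plan is to apply the fundamental theorem of calculus ($\Psi'=\psi$) together with the monotonicity of $\phi$ on $[0,\infty)$, after using the order-preserving property of one-dimensional characteristics recorded in \eqref{sgn} to locate the signs. Drop the time variable for brevity and set $A:=\eta(x)-\eta(z)$ and $B:=\eta(y)-\eta(z)$, so that $A>B$ with $A-B=\eta(x)-\eta(y)>0$. Both desired bounds then become statements about $\int_B^A \psi(s)\,ds$.

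For the lower bound \eqref{eq:Psi:lb}, I would use the crude but efficient estimate $|s|\le\max(|A|,|B|)\le D_\eta(t)$ valid for every $s\in[B,A]$; this is immediate since $x,y,z\in\mathrm{supp}(\rho_0)$. As $\psi(s)=\phi(|s|)$ with $\phi$ nonincreasing on $[0,\infty)$, one has $\psi(s)\ge \psi(D_\eta(t))$ on this interval, and integrating over a segment of length $A-B=\eta(x)-\eta(y)$ yields \eqref{eq:Psi:lb}.

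For the upper bound \eqref{eq:Psi:ub}, I would reparametrize by the centre $m:=(A+B)/2$ and half-width $h:=(A-B)/2>0$, and consider
\[
g(m):=\int_{m-h}^{m+h}\psi(s)\,ds = \Psi(m+h)-\Psi(m-h).
\]
Since $2\Psi(h)=\int_{-h}^h\psi(s)\,ds=g(0)$ by evenness of $\psi$, the inequality \eqref{eq:Psi:ub} reduces to showing $g(m)\le g(0)$ for all $m\in\R$. This is a standard unimodal-shift estimate: differentiating gives $g'(m)=\phi(|m+h|)-\phi(|m-h|)$, and an elementary check using $h>0$ shows $|m+h|\ge|m-h|$ for $m\ge 0$ with the reverse inequality for $m\le 0$; combined with the monotonicity of $\phi$, this delivers $g'(m)\le 0$ on $[0,\infty)$ and $g'(m)\ge 0$ on $(-\infty,0]$, so $g$ is maximized at $m=0$.

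I do not anticipate any genuine obstacle: both bounds reduce to one-line consequences of the monotonicity of $\phi$. The only subtle point worth flagging is that $\Psi$ is not globally concave on $\R$ (it is odd), so the upper bound cannot be obtained by a direct concavity argument on $\Psi$; one must instead exploit the symmetry of $\psi$ around $0$, which is precisely what the unimodal-shift differentiation above does.
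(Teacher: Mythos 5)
Your lower-bound argument is essentially the paper's in different clothing: the paper invokes the mean value theorem to produce a point $\xi$ on the segment between $\eta(x)-\eta(z)$ and $\eta(y)-\eta(z)$ with $|\xi|\le D_\eta(t)$, whereas you integrate $\psi$ and bound it pointwise from below by $\psi(D_\eta(t))$ on the same segment; these are the same estimate. For the upper bound, however, you take a genuinely different route. The paper splits into cases according to the location of $\eta(z)$ relative to the interval $[\eta(y),\eta(x)]$: when $\eta(z)$ lies outside, the two arguments of $\Psi$ have the same sign and the increment equals $\int_{\eta(y)-\eta(z)}^{\eta(x)-\eta(z)}\psi(r)\,dr$, bounded by $\Psi(\eta(x)-\eta(y))\le 2\Psi\bigl(\tfrac12(\eta(x)-\eta(y))\bigr)$ via subadditivity; when $\eta(z)$ lies inside, oddness rewrites the increment as $\Psi(\eta(x)-\eta(z))+\Psi(\eta(z)-\eta(y))$ and concavity of $\Psi$ on $[0,\infty)$ closes the estimate. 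You instead reparametrize by the midpoint $m=(A+B)/2$ and half-width $h=(A-B)/2$, observe that $g(m)=\Psi(m+h)-\Psi(m-h)$ satisfies $g(0)=2\Psi(h)$, and show $g$ is maximized at $m=0$ by differentiating and using only that $\phi$ is radially nonincreasing together with the elementary inequality $|m+h|\ge|m-h|$ for $m\ge 0$, $h>0$ (and its mirror for $m\le 0$). This treats all three of the paper's configurations uniformly, avoids the case split, and never invokes concavity of $\Psi$ directly — only its ultimate source, the monotonicity of $\phi$. The subtlety you flag (that $\Psi$ is odd, hence not globally concave, so one cannot just quote concavity across sign changes) is exactly what the paper's case split is designed to navigate; your symmetry argument sidesteps it cleanly. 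Both proofs are correct; yours is arguably the more elementary and self-contained.
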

\begin{proof}
For the lower bound, by applying the mean value theorem, we get
\[
\Psi(\eta(x)-\eta(z)) - \Psi(\eta(y)-\eta(z)) = \psi(\xi(x,y,z))\lt(\eta(x) - \eta(y)\rt),
\]
where $\xi(x,y,z)$ is given as
\[
\xi(x,y,z) = \tau \lt(\eta(x)-\eta(z) \rt) + (1 - \tau) (\eta(y)-\eta(z))
\]
for some $\tau \in [0,1]$. Since $|\xi(x,y,z)| \le D_\eta(t)$ and $\psi$ is nonincreasing,
the inequality \eqref{eq:Psi:lb} follows.

For the upper bound, we distinguish three cases: $\eta(z) < \eta(y)$, $\eta(y) \le  \eta(z) \le \eta(x)$, and $\eta(z) > \eta(x)$. If $\eta(z)<\eta(y)$ or $\eta(z)>\eta(x)$, then we obtain
\[
\Psi(\eta(x) - \eta(z)) - \Psi(\eta(y)-\eta(z)) = \int_{\eta(y)-\eta(z)}^{\eta(x) - \eta(z)} \psi(r)\,dr \le \int_0^{\eta(x) - \eta(y)} \psi(r)\,dr = \Psi(\eta(x) - \eta(y)).
\]
On the other hand, if $\eta(y)\le \eta(z)\le \eta(x)$, then
\begin{align*}
\Psi(\eta(x) - \eta(z)) - \Psi(\eta(y)-\eta(z)) &= \int_0^{\eta(x)-\eta(z)} \psi(r)\,dr +  \int_0^{\eta(z)-\eta(y)} \psi(r)\,dr \\
&= \Psi(\eta(x) - \eta(z)) + \Psi(\eta(z) - \eta(y)).
\end{align*}
Then it follows that
\[
\Psi(\eta(x) - \eta(y)) \le \Psi(\eta(x) - \eta(z)) + \Psi(\eta(z) - \eta(y)) \le 2\Psi \lt( \frac{1}{2} (\eta(x) - \eta(y)) \rt)
\]
since $[0,\infty) \ni r \mapsto \Psi(r)$ is concave. This completes the proof.
\end{proof}

The previous lemma provides a general Lipschitz-type control, with coefficients depending on the global diameter $D_\eta$. Under additional assumptions on the communication weight $\psi$, these bounds can be refined. In the bounded-weight case, the estimates reduce to expressions depending solely on the local increment $\eta(x)-\eta(y)$. In the singular case, we still obtain a lower bound involving $D_\eta$, while the upper bound can be expressed purely in terms of the increment.
\begin{corollary}\label{cor:Psi}
Let $\alpha \in (0,1)$. Suppose $x,y,z \in {\rm supp}(\rho_0)$ with $x>y$ and $|\eta(x)-\eta(y)|\le R$. Then, the following Lipschitz-type estimates hold:
\begin{itemize}
\item [(i)] If $\psi$ satisfies \eqref{psi:type1}, then 
\[
\psi_{\rm m}(\eta(x)-\eta(y)) \le \Psi(\eta(x)-\eta(z)) - \Psi(\eta(y)-\eta(z)) \le \psi_{\rm M}(\eta(x)-\eta(y)).
\]
\item [(ii)] If $\psi$ satisfies \eqref{psi:type2}, then 
\[
A(D_\eta)^{-\alpha}(\eta(x)-\eta(y)) \le \Psi(\eta(x)-\eta(z)) - \Psi(\eta(y)-\eta(z)) \le \frac{2^\alpha B}{1-\alpha}(\eta(x)-\eta(y))^{1-\alpha}.
\]
\end{itemize}
\end{corollary}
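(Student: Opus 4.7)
The plan is to derive both estimates as direct consequences of Lemma \ref{lem:Psi}, specializing the general bounds \eqref{eq:Psi:lb}--\eqref{eq:Psi:ub} to the two structural assumptions on $\psi$. Since both assumptions \eqref{psi:type1} and \eqref{psi:type2} are only effective on the ball of radius $R$, we will work under the tacit condition $D_\eta \le R$, which is the regime in which the corollary will be applied (the uniform-in-time diameter bounds justifying this are established separately in Section \ref{sec:ub}). Under this condition, the quantity $\psi(D_\eta)$ is controlled from below by the explicit profile of $\psi$, and the concave majorant $\Psi$ admits an explicit pointwise estimate near the origin.

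For case (i), the lower bound is immediate from \eqref{eq:Psi:lb}: since $\psi(D_\eta)\ge \psi_{\rm m}$ by \eqref{psi:type1}, we obtain the claimed $\psi_{\rm m}(\eta(x)-\eta(y))$. For the upper bound, I would apply \eqref{eq:Psi:ub} and then use $\psi \le \psi_{\rm M}$ on $[0,R]$ to get $\Psi(r) \le \psi_{\rm M}\, r$ for $r \le R/2$, so that the factor of $2$ in \eqref{eq:Psi:ub} cancels against the $\tfrac12$ inside $\Psi$, yielding exactly $\psi_{\rm M}(\eta(x)-\eta(y))$.

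For case (ii), the lower bound again follows from \eqref{eq:Psi:lb} together with the pointwise lower bound $\psi(D_\eta)\ge A D_\eta^{-\alpha}$ coming from \eqref{psi:type2}. For the upper bound, invoking \eqref{eq:Psi:ub} reduces the estimate to evaluating $\Psi\bigl(\tfrac12(\eta(x)-\eta(y))\bigr)$, which can be computed explicitly using the upper pointwise bound $\psi(r)\le B r^{-\alpha}$: direct integration on $[0,\tfrac12(\eta(x)-\eta(y))]$ gives $\frac{B}{1-\alpha}\bigl(\tfrac12(\eta(x)-\eta(y))\bigr)^{1-\alpha}$, and multiplying by $2$ produces the constant $\frac{2^{\alpha}B}{1-\alpha}$ after rearranging the powers of $2$.

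No serious obstacle arises here; the argument is essentially bookkeeping with the constants coming from the explicit profile of $\psi$ near the origin. The only point requiring attention is the appearance of $D_\eta$ in the lower bounds, which reflects the fact that the chain rule in Lemma \ref{lem:Psi} evaluates $\psi$ at an intermediate point whose magnitude is controlled only by the global diameter $D_\eta$, not by the local increment $\eta(x)-\eta(y)$. This asymmetry between lower and upper bounds is precisely what forces the algebraic (rather than exponential) decay rate in the singular case of our main theorems.
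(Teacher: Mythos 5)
Your proposal is correct and follows essentially the same route as the paper: lower bounds directly from \eqref{eq:Psi:lb} together with the pointwise lower bound on $\psi$, upper bounds by inserting the majorants $\Psi(r)\le\psi_{\rm M}r$ and $\Psi(r)\le\frac{B}{1-\alpha}r^{1-\alpha}$ into \eqref{eq:Psi:ub}. The only difference is that you spell out the bookkeeping (the cancellation of the factor $2$ against the $\tfrac12$ inside $\Psi$ in case (i), and the rearrangement $2\cdot(1/2)^{1-\alpha}=2^\alpha$ in case (ii)), which the paper leaves implicit.
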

\begin{proof}
The lower bounds are immediate from \eqref{eq:Psi:lb} under assumptions \eqref{psi:type1}--\eqref{psi:type2}. For the upper bounds, observe that for  weights of type \eqref{psi:type1},
\[
\Psi(r) \le \psi_{\rm M} r,
\]
while for type \eqref{psi:type2},
\[
\Psi(r) \le \frac{B}{1-\alpha} r^{1-\alpha}, \quad 0 \leq r \leq R.
\]
Substituting these into \eqref{eq:Psi:ub} yields the result.
\end{proof}

Finally, we provide a complementary averaged estimate, which compares increments at two different points and shows that the effective Lipschitz constant of $\Psi$ decreases with scale.
\begin{lemma}\label{lem:Psi:avg}
Let $\Psi:\R \to \R$ be a $C^1$ odd function, increasing and concave on $[0,\infty)$. Then for any $a,b \in \R$ with $ab>0$, the increment of $\Psi$ satisfies the averaged Lipschitz estimate
\[
|\Psi(a) -\Psi(b)| \le \min\lt\{\frac{\Psi(a)}{a}, \frac{\Psi(b)}{b}\rt\}|a-b|.
\]
Moreover, the map $r \mapsto \frac{\Psi(r)}{r}$ is decreasing on $(0,\infty)$.
\end{lemma}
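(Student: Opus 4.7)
The plan is to derive both assertions from a single geometric fact, namely that for a concave function on $[0,\infty)$ vanishing at the origin, the secant slope from $0$, $r \mapsto \Psi(r)/r$, is nonincreasing.

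First I would prove the second assertion. Fix $0 < r < s$ and write $r = (r/s)\,s + (1-r/s)\cdot 0$. Since $\Psi$ is odd we have $\Psi(0)=0$, and concavity on $[0,\infty)$ gives
\[
\Psi(r) \;\geq\; \tfrac{r}{s}\Psi(s) + \lt(1-\tfrac{r}{s}\rt)\Psi(0) \;=\; \tfrac{r}{s}\Psi(s),
\]
i.e., $\Psi(r)/r \geq \Psi(s)/s$. This yields the monotonicity claim.

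For the averaged Lipschitz inequality, the condition $ab>0$ splits into the two sign cases. The case $a,b<0$ reduces to the positive one by oddness: $\Psi(a)-\Psi(b)=-(\Psi(-a)-\Psi(-b))$ and $\Psi(a)/a = \Psi(-a)/(-a)$, while $|a-b|=|(-a)-(-b)|$. So I may assume $0<b\leq a$. By the monotonicity established above, $\min\{\Psi(a)/a,\,\Psi(b)/b\}=\Psi(a)/a$, and the desired inequality becomes
\[
\Psi(a)-\Psi(b) \;\leq\; \frac{\Psi(a)}{a}\,(a-b).
\]
Multiplying through by $a>0$ and rearranging, this is equivalent to $b\Psi(a)\leq a\Psi(b)$, i.e., $\Psi(a)/a \leq \Psi(b)/b$—precisely the secant-slope monotonicity applied to the pair $0<b\leq a$.

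There is no substantive obstacle, since both claims are essentially restatements of the chord inequality for a concave function vanishing at the origin; establishing the secant monotonicity once suffices to close both parts simultaneously.
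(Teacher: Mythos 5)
Your proof is correct and follows essentially the same route as the paper: both arguments reduce everything to the monotonicity of the secant slope $r\mapsto\Psi(r)/r$, which is the content of concavity together with $\Psi(0)=0$. The only cosmetic difference is that the paper invokes the mean value theorem to obtain the three-chord inequality in one stroke, whereas you first derive the secant monotonicity from the convex-combination form of concavity and then observe that the Lipschitz bound is algebraically equivalent to it; your version has the small bonus of not using differentiability.
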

\begin{proof}
Without loss of generality, assume $a>b>0$. By the mean value theorem and the concavity of $\Psi$, we obtain
\[
0 \le \frac{\Psi(a)-\Psi(b)}{a-b} \le \frac{\Psi(b) - \Psi(0)}{b-0}=\frac{\Psi(b)}{b}.
\]
In particular, we obtain
\[
0 \le \frac{\Psi(a)-\Psi(b)}{a-b} \le \frac{\Psi(a)}{a} \le \frac{\Psi(b)}{b},
\]
which proves the desired inequality and shows that $r\mapsto \Psi(r)/r$ is decreasing.
\end{proof}

%%%%%%%%%%%%%%%%%%%%%%%%%%%%%%
%
%
%
%
%
%
%
%
%
%%%%%%%%%%%%%%%%%%%%%%%%%%%%%
\section{Uniform-in-time bounds for diameter of density support}\label{sec:ub}
In this section, we establish uniform-in-time bounds for the diameter of the density support. 
As explained in Section \ref{sec:lagr}, the quantity $D_\eta(t)$ coincides with the diameter of 
${\rm supp}(\rho(t))$, since $\rho(t)$ is obtained as the push-forward of $\rho_0$ under the 
Lagrangian flow $\eta(t,\cdot)$. Thus, controlling the support diameter is equivalent to 
controlling the maximal distance between characteristics. In particular, the study of $D_\eta(t)$ reduces to analyzing the evolution of extreme trajectories and their velocity differences. 

The behavior of $D_\eta(t)$ crucially depends on the convexity properties of the interaction potential. For $\lambda$-convex potentials, convexity induces contraction and guarantees $D_\eta(t)\to0$ as $t\to\infty$. In contrast, for the Coulomb potential with quadratic confinement, convexity is lost and genuine contraction fails, but we will still prove a uniform-in-time bound. The main proposition below summarizes both cases.

\begin{proposition}\label{prop:1dlc:0}
Let $d=1$ and let $(\eta,v)$ be a global solution to \eqref{main_sys2} with sufficient regularity.  Suppose that the communication weight function $\psi$ is a nonnegative, radially decreasing, locally integrable function and $\rho_0$ is compactly supported.
\begin{itemize}
\item[(i)] If the interaction potential $W$ satisfies the one-dimensional $\lambda$-convexity condition \eqref{eq:lc:1d}, then flow and velocity diameters decay to zero:
\[
D_\eta(t) + D_\omega(t) + D_v(t) \to 0 \quad \mbox{as } t \to \infty.
\]
\item[(ii)] If the interaction potential $W$ is given by \eqref{def_W_cc}, then flow diameter is uniformly bounded in time:
\[
D_\eta(t) \le 2 + \max\lt\{ \sqrt{(D_\eta(0)-2)^2+D_\omega(0)^2}, \ \max\{D_{\omega}(0),2\Psi(1)+1\}+2\rt\}=:\overline{D}
\]
for all $t\geq0$.
\end{itemize}
\end{proposition}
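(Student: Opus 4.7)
The plan is to work in the Lagrangian formulation \eqref{main_sys3} with the auxiliary variable $\omega$ from \eqref{def:om}, exploiting the one-dimensional order-preservation of characteristics. Under this property, the extremal labels $x_M := \max\mathrm{supp}(\rho_0)$ and $x_m := \min\mathrm{supp}(\rho_0)$ remain extremal for all $t\geq 0$, so $D_\eta(t) = \eta(t,x_M)-\eta(t,x_m)$. Setting $a(t):=D_\eta(t)$ and $b(t):=\omega(t,x_M)-\omega(t,x_m)$, Lemma~\ref{lem:Psi} applied to the alignment integral gives $\dot a \leq b - \psi(D_\eta)\,a$. For the momentum component: under \eqref{eq:lc:1d} the $\lambda$-convexity yields $\dot b \leq -\lambda a$ (Part (i)); under the Coulomb--quadratic choice \eqref{def_W_cc}, the explicit form $W'(r)=-\mathrm{sgn}(r)+r$, combined with the order-preservation identity \eqref{sgn} and $\int_{x_m}^{x_M}\rho_0 = 1$, produces the exact relation $\dot b = 2-a$.

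For Part (i), the natural pair Lyapunov $V := \tfrac12 b^2 + \tfrac\lambda2 a^2$ formally obeys $\dot V\leq -\lambda\psi(a)a^2$, but controlling the sign when $b<0$ requires a global energy argument since $\lambda$-convexity alone gives no upper bound on $|\dot b|$. To this end, I would introduce
\[
E_\omega(t) := \tfrac12\iint(\omega(x)-\omega(y))^2\rho_0(dx)\rho_0(dy) + \iint W(\eta(x)-\eta(y))\rho_0(dx)\rho_0(dy),
\]
and verify via a three-particle computation and the mean value theorem applied to the $\Psi$-differences that $\dot E_\omega = -\iiint \psi(\xi)(\eta(x)-\eta(z))W'(\eta(x)-\eta(z))\rho_0^{\otimes 3}\leq 0$ by $\lambda$-convexity. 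Combined with $W(r)\geq W(0)+\tfrac{\lambda}{2}r^2$, this bounds second moments uniformly; pairing this with the pair ODE inequalities then closes a bootstrap that gives a uniform bound on $a$ and yields the integrable dissipation $\int_0^\infty \psi(a)a^2\,dt < \infty$. Uniform Lipschitz regularity of $t\mapsto a(t)$ forces $a(t)\to 0$ via Barbalat's lemma; monotonicity of $b$ (from $\dot b\leq -\lambda a\leq 0$) together with a LaSalle-type invariance argument gives $b\to 0$; finally $D_v\to 0$ follows from the decomposition $v=\omega - I$ with $|I(x)-I(y)|\leq 2\Psi(D_\eta/2)\to 0$ via \eqref{eq:Psi:ub}.

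For Part (ii), shifting variables $\tilde a := a-2$, $\tilde b := b$ gives the damped rotator $\dot{\tilde a} = \tilde b - K$, $\dot{\tilde b} = -\tilde a$, where $0\leq K\leq 2\Psi(a/2)$ by Lemma~\ref{lem:Psi}. The Lyapunov $V := \tilde a^2 + \tilde b^2$ satisfies $\dot V = -2\tilde a K$: dissipative on $\{\tilde a \geq 0\}$ but potentially increasing on $\{\tilde a<0\}$. If the trajectory stays in $\{\tilde a\geq 0\}$, then $V(t)\leq V(0) = (D_\eta(0)-2)^2 + D_\omega(0)^2$, which yields the first candidate in $\overline D$. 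Otherwise, during an excursion into $\{\tilde a<0\}$, the constraint $a\in[0,2)$ forces $|\tilde a|\leq 2$ and $K\leq 2\Psi(1)$, while the unperturbed rotator structure $\dot{\tilde b}=-\tilde a\in[0,2]$ keeps each excursion of bounded duration; tracking $\tilde b$ across crossings of $\{\tilde a=0\}$ gives the amplitude bound $|\tilde b| \leq \max\{D_\omega(0), 2\Psi(1)+1\}$, whence $|\tilde a|\leq |\tilde b| + 2$ produces the second candidate in $\overline D$. The principal obstacle is precisely this excursion analysis: since the anti-dissipative half-plane admits a priori growth of $V$, one must rule out accumulation over multiple crossings. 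This is handled by leveraging the unit-frequency rotation structure of the $(\tilde a,\tilde b)$-oscillator together with the uniform bound $K\leq 2\Psi(1)$ in the bad region, which cap each excursion's contribution to $|\tilde b|$ by a universal constant depending only on $\Psi(1)$.
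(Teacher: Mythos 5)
Your setup — Lagrangian formulation, reduction to the pair $(a,b)=(D_\eta,D_\omega)$, the ODIs $\dot a\le b$ (or the sharper $\dot a\le b-\psi(D_\eta)a$), $\dot b\le -\lambda a$ — matches the paper's. But from there the two parts diverge in correctness.

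\textbf{Part (i): genuine gap.} You correctly note that the Lyapunov $V=\tfrac12 b^2+\tfrac\lambda2 a^2$ needs $b\ge 0$ to close, but you then abandon the pointwise pair system and reach for the global energy $E_\omega$. This is where the argument breaks down. First, the coercivity $W(r)\ge W(0)+\tfrac\lambda2 r^2$ applied to $E_\omega$ only bounds the \emph{second moment} $\iint|\eta(x)-\eta(y)|^2\rho_0\rho_0$, not the diameter $a=D_\eta$, which is an $L^\infty$-type quantity. The claimed ``bootstrap that gives a uniform bound on $a$'' is not substantiated, and I do not see how the pair ODIs close it without already knowing $b\ge0$. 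Second, the integrability $\int_0^\infty\psi(a)a^2\,dt<\infty$ is not delivered by $\dot E_\omega\le0$: the dissipation of $E_\omega$ involves a triple-averaged quantity $\iiint\psi(\xi(x,y,z))(\eta(x)-\eta(y))W'(\eta(x)-\eta(y))\rho_0^{\otimes3}$, and the weight $\psi(\xi)$ there is evaluated at an intermediate separation that can be as large as $D_\eta$; there is no obvious way to extract the pointwise $\psi(a)a^2$ from it. The paper's route is far more elementary and avoids both issues: since $a>0$ for all $t$ (characteristics never cross) and $\dot a\le b$, if $b$ ever became nonpositive it would be strictly negative thereafter (because $\dot b\le -\lambda a<0$), forcing $a$ to hit zero in finite time --- a contradiction. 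Hence $b>0$ always. Then $\dot b\le-\lambda a$ with $b\ge0$ gives directly $\int_0^\infty a\,dt\le b(0)/\lambda<\infty$, and $\dot a\le b\le b(0)$ gives a one-sided Lipschitz bound, so $a\to0$ by (one-sided) Barbalat. Finally $b\to0$ follows from $\dot a\ge b-2\Psi(a/2)$ and $a\to0$. No global energy, no uniform bound on $a$, no LaSalle is needed. You should look for this positivity argument before reaching for heavier machinery.

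\textbf{Part (ii): right idea, incomplete bookkeeping.} The shifted rotator $\dot{\tilde a}=\tilde b-K$, $\dot{\tilde b}=-\tilde a$ with $0\le K\le 2\Psi(a/2)$ and the Lyapunov $V=\tilde a^2+\tilde b^2$ with $\dot V=-2\tilde aK$ is exactly the paper's structure. The issue you flag --- that $V$ can grow on $\{\tilde a<0\}$ and one must rule out accumulation over many crossings --- is real, and your proposal does not actually close it; ``unit-frequency rotation plus $K\le 2\Psi(1)$ in the bad region'' is a heuristic, not a proof that the growth does not pile up. The paper sidesteps accumulation cleanly by working backwards from the current time $t$: it locates the \emph{last} time $t_p\le t$ that $\tilde a=0$, and, when $\tilde b(t_p)$ is large, the \emph{last} time $t_l\le t_p$ that $\tilde b\le 2\Psi(1)+1$, then shows $\tilde a\le 0$ on $[t_l,t_p]$, $\dot X\ge 1$ there, and integrates $dY/dX\le 2-X$ to get $\tilde b(t_p)-\tilde b(t_l)\le 2$. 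Because one only ever crosses this single last excursion, no accumulation can occur. This is the ingredient your sketch is missing; also note the $+2$ increment over the excursion should be added to the $\tilde b$ bound, so the chain is $|\tilde a(t)|\le \tilde b(t_p)\le\max\{D_\omega(0),2\Psi(1)+1\}+2$, not ``$|\tilde b|\le\max\{\cdots\}$ then $|\tilde a|\le|\tilde b|+2$.''
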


\begin{remark}The upper bound assumption on the quotient $W'$ in \eqref{eq:lc:1d:ub} is not required to get the decay estimate in Proposition \ref{prop:1dlc:0} (i).
\end{remark}

%%%%%%%%%%%%%%%%%%%%%%%%%%%%%%
%
%
%
%
%
%
%
%
%
%%%%%%%%%%%%%%%%%%%%%%%%%%%%%%

\subsection{$\lambda$-convex potential case}

We begin with the case of $\lambda$-convex interaction potentials, corresponding to part (i) of Proposition \ref{prop:1dlc:0}. In this setting, convexity provides a coercive mechanism ensuring contraction of trajectories. The argument reduces to analyzing the relative separation and velocity difference of two characteristics, which satisfy a closed system of differential inequalities. 

In the sequel, we fix $x,y\in {\rm supp}(\rho_0)$ with $x>y$. Then, we have the monotonicity of characteristics:
\bq\label{eq:safe}
\eta(t,x) >\eta(t,y),\quad \text{for all } t\ge0,
\eq
since characteristics never collide.

Combining the reformulated system \eqref{main_sys3} with the $\lambda$-convexity of $W$ \eqref{eq:lc:1d}, we obtain
\begin{align}\label{eq_main_2}
\begin{aligned}
\pa_t (\eta(t,x) - \eta(t,y) )&= \omega(t,x) - \omega(t,y) \cr
&\quad - \lt(\int_\R \lt(\Psi(\eta(t,x)-\eta(t,z)) - \Psi(\eta(t,y)-\eta(t,z)) \rt)\rho_0(z)\,dz\rt),\cr
\pa_t ( \omega(t,x) -  \omega(t,y)) &\le  - \lambda ( \eta(t,x) - \eta(t,y)).
\end{aligned}
\end{align}
Since $\Psi$ is an increasing function, the first equation in \eqref{eq_main_2} yields a simple upper bound
\bq\label{eq:facile}
\pa_t (\eta(t,x) - \eta(t,y) ) \le \omega(t,x) - \omega(t,y). 
\eq

These inequalities suggest studying an auxiliary two-dimensional system of ordinary differential inequalities (ODIs). Although the system is elementary, it exhibits strong contraction properties which are sufficient to conclude the proof of Proposition \ref{prop:1dlc:0}.

\begin{lemma}\label{lem:1dlc:basic}
Let $\kappa \geq 0$ and let $(X,Y)$ be a $C^1([0,\infty);(0,\infty) \times \R)$ curve satisfying  
\begin{equation}\label{ODI_0}
\begin{split}
\kappa (Y(t) - f(X(t)) ) \leq  \dot{X}(t) &\le Y(t), \\
\dot{Y}(t) &\le -\lambda X(t),
\end{split}
\end{equation}
for some $\lambda>0$ and an increasing continuous function $f: [0,\infty) \to [0,\infty)$ with $f(0) = 0$. Then the following properties hold:
\begin{itemize}
\item[(i)] $Y(t)$ is strictly decreasing and $Y(t)>0$ for all $t\ge 0$.
\item[(ii)] $\lim_{t\to \infty} X(t) =0$.
\item[(iii)] If $\kappa > 0$, then $\lim_{t \to \infty} Y(t) =0$.
\end{itemize}
\end{lemma}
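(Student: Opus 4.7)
The plan is to proceed by a simple dissipation and monotonicity argument, treating the three items in order and exploiting the quadratic energy $\frac{\lambda}{2}X^2+\frac{1}{2}Y^2$ as the central object.

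For part (i), I would derive strict monotonicity of $Y$ directly from $\dot Y(t)\le -\lambda X(t)<0$, which is immediate from $X>0$. Positivity of $Y$ then follows by contradiction: if $Y(t_0)\le 0$ at some $t_0\ge 0$, then strict decrease gives $Y(t)<Y(t_0)\le 0$ for $t>t_0$, hence eventually $Y(t)\le Y(t_0+1)<0$; integrating the upper bound $\dot X\le Y$ over a sufficiently long interval would drive $X$ below zero, contradicting $X>0$. Hence $Y(t)>0$ for all $t\ge 0$.

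For part (ii), the natural quadratic energy $E(t):=\frac{\lambda}{2}X(t)^2+\frac{1}{2}Y(t)^2$ does the work. Using $X,Y>0$ together with the two inequalities, I compute
\[
\dot E(t)=\lambda X\dot X+Y\dot Y\le \lambda XY+Y(-\lambda X)=0,
\]
so $E$ is non-increasing and $X,Y$ are uniformly bounded. Integrating $\dot Y\le -\lambda X$ with $Y\ge 0$ then yields
\[
\int_0^\infty X(s)\,ds\le \frac{Y(0)}{\lambda}<\infty.
\]
To upgrade this $L^1$ control to $X(t)\to 0$, I would check boundedness of $\dot X$: the upper bound is $\dot X\le Y\le Y(0)$, while the lower bound $\dot X\ge \kappa(Y-f(X))\ge -\kappa f(X)$ is finite because $X$ is bounded and $f$ is continuous with $f(0)=0$. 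Thus $X$ is Lipschitz, hence uniformly continuous, and the standard fact that a uniformly continuous nonnegative function in $L^1(0,\infty)$ vanishes at infinity concludes. The edge case $\kappa=0$ is in fact vacuous, since $\dot X\ge 0$ combined with $X\ge X(0)>0$ contradicts $X\in L^1$.

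For part (iii), assume $\kappa>0$ and argue by contradiction. Since $Y$ is positive and decreasing, it converges to some $L\ge 0$; suppose $L>0$. By (ii), $X(t)\to 0$, and continuity of $f$ with $f(0)=0$ gives $f(X(t))\to 0$. Hence for $t$ sufficiently large the lower bound $\dot X\ge \kappa(Y-f(X))\ge \kappa L/2>0$ forces $X(t)\to\infty$, contradicting (ii). Thus $L=0$.

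The main obstacle is the passage from $X\in L^1$ to $X(t)\to 0$ in part (ii): it requires some form of regularity for $X$, and the lower bound $\kappa(Y-f(X))\le \dot X$ serves precisely to supply uniform continuity through a bound on $\dot X$. Beyond this technical point, the argument is essentially a textbook dissipation estimate for a coupled pair of scalar differential inequalities.
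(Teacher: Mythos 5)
Your proof is correct and follows essentially the same strategy as the paper's: monotonicity from $\dot Y\le-\lambda X<0$, positivity by contradiction, $L^1$-integrability of $X$ from integrating $\dot Y\le-\lambda X$, a Barbalat-type conclusion for $X\to0$, and a contradiction argument for $Y\to0$ exploiting $\kappa>0$. The one place you go beyond the paper is the uniform-continuity step in (ii): the paper only invokes the one-sided bound $\dot X\le Y(0)$ before citing the ``uniformly continuous $+$ $L^1$ $+$ nonnegative $\Rightarrow\to0$'' fact, whereas you derive a genuine two-sided bound $|\dot X|\le\max\{Y(0),\,\kappa f(\sup X)\}$ by first establishing boundedness of $X$ via the quadratic energy $E=\tfrac\lambda2 X^2+\tfrac12 Y^2$ (which is the same Lyapunov quantity the paper later uses in Proposition \ref{prop:1dlc:0}). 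Your version is the more careful reading, since a one-sided derivative bound does not give uniform continuity in the classical sense, although the weaker conclusion (nonnegative $L^1$ function with $\dot X\le M$ tends to zero) does hold by a backward-looking argument. Your observation that the case $\kappa=0$ is vacuous under the standing assumption $X>0$ is also correct and worth noting.
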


Before proving the lemma, let us emphasize the role of the lower bound in \eqref{ODI_0}. 
If $\kappa=0$, then the condition $\dot X \ge 0$ merely ensures that $X$ is nondecreasing. 
In this case, \eqref{ODI_0}$_2$ still implies that $Y(t)$ is strictly decreasing and converges to some nonnegative limit $Y_*\ge 0$, but it does not rule out the possibility that $Y_*>0$. 
By contrast, when $\kappa>0$ the inequality $\dot X \ge \kappa(Y-f(X))$ prevents such a scenario: if $Y$ were to converge to a positive limit, the inequality would force $\dot X$ to remain strictly positive for large $t$, which is incompatible with the convergence of $X(t)$ to zero. 
Hence, the additional lower bound on $\dot X$ is precisely what guarantees that $Y(t)$ vanishes in the large-time limit. 
In the context of \eqref{eq_main_2}, this condition reflects the contribution of the alignment term, which is expressed through the primitive function $\Psi$ and plays an essential role in the asymptotic contraction of the system.

\begin{proof}[Proof of Lemma \ref{lem:1dlc:basic}]
(i) The monotonicity of $Y$ follows directly from \eqref{ODI_0}$_2$ together with the positivity of $X$. For the positivity of $Y$, suppose that $Y(t_0) \le 0$ for some $t_0 \ge 0$. Since $Y$ is strictly decreasing, there exists $t_1>t_0$ such that $Y(t_1) <0$. From $\eqref{ODI_0}_1$, we then obtain 
\[
\dot{X}(t) \le Y(t) \le Y(t_1) <0, \quad \text{for all }  t \ge t_1,
\]
which leads to $X(t) = 0$ in a finite time for some $t > t_1$, leading to a contradiction. 

\medskip
\noindent (ii) From \eqref{ODI_0}$_2$, we deduce
\[
\int_0^t X(s)\,ds \le \frac{1}{\lambda}\lt(Y(0)-Y(t)\rt) \le \frac{Y(0)}{\lambda} <+\infty,
\]
proving $X \in L^1([0,\infty))$. On the other hand, \eqref{ODI_0}$_1$ yields
\[
\dot{X}(t) \le Y(t) \le Y(0) \quad \mbox{for all } t \geq 0,
\]
hence $X$ is uniformly continuous on $[0,\infty)$. It is a standard fact that a uniformly continuous, nonnegative, integrable function must converge to zero as $t\to\infty$. Therefore $X(t)\to 0$ as claimed.

\medskip
\noindent (iii) By (i), $Y(t)$ is positive and strictly decreasing, hence the limit $Y_* = \lim_{t \to \infty} Y(t)$ exists with $Y_*\ge 0$. Assume $Y_*>0$. Since $X(t)\to 0$ as $t\to\infty$ and $f$ is continuous with $f(0)=0$, there exists $t_0 \geq 0$ such that $f(X(t)) < \frac{Y_*}{2}$ for all $t\ge t_0$. Then from \eqref{ODI_0}$_1$ we obtain
\[
\dot{X}(t) \ge \kappa( Y(t) - f(X(t))) \ge  \frac{\kappa Y_*}{2}>0\quad \text{for all }  t \ge t_0,
\]
which contradicts $X(t)\to 0$. Hence $Y_*=0$.
\end{proof}

The proof relies on reducing the dynamics of \eqref{eq_main_2} to a two-dimensional system of differential inequalities involving the distance of two characteristics and the corresponding velocity difference. This reduction makes it possible to apply Lemma \ref{lem:1dlc:basic}, which immediately yields the decay of $D_\eta$ and $D_\omega$, while the control of $D_v$ requires an additional estimate involving the primitive function $\Psi$. 
We now provide the details.

\begin{proof}[Proof of Proposition \ref{prop:1dlc:0} (i)]
Fix $x,y\in {\rm supp}(\rho_0)$ with $x>y$, and set
\[
X(t):= \eta(t,x)-\eta(t,y),\quad Y(t):= \omega(t,x) - \omega(t,y).
\]
From \eqref{eq:safe}--\eqref{eq:facile} it follows that $(X,Y)$ satisfies the assumptions of Lemma \ref{lem:1dlc:basic}. In particular, if $x,y$ are chosen so that
\[
x-y = D_{\eta}(0),
\]
then we have
\[
X(t)=D_{\eta}(t),\quad Y(t)=D_{\omega}(t)
\]
thanks to \eqref{eq:safe} and Lemma \ref{lem:1dlc:basic} (i), respectively.

By Lemma \ref{lem:1dlc:basic} (ii) and (iii), we immediately obtain
\[
D_\eta(t) + D_\omega(t) \to 0 \quad \mbox{as } t \to \infty.
\]
Moreover, the dynamics admit an explicit uniform bound. Indeed, since $Y(t)>0$ by Lemma \ref{lem:1dlc:basic} (i), we compute
\[
\frac{d}{dt} \lt(\lambda X(t)^2 + Y(t)^2\rt) \le 0,
\]
which yields
\[
D_\eta(t) \le \frac{1}{\lambda^{1/2}} \lt( \lambda D_\eta(0)^2 + D_\omega(0)^2 \rt)^{1/2} \quad \text{for all}\quad t\ge0.
\]
Finally, to control $D_v(t)$ we recall the definition of $\omega$ \eqref{def:om} and the upper bound \eqref{eq:Psi:ub}. For arbitrary $x,y\in {\rm supp}(\rho_0)$ with $x>y$ we have
\[
|v(x) -v(y)| \le |\omega(x)-\omega(y)| + 2\Psi\lt( \frac{\eta(x)-\eta(y)}{2}\rt).
\]
Taking the supremum over such pairs yields
\bq\label{D_v:estimate}
D_v(t) \le D_\omega(t) + 2\Psi\lt(\frac{D_\eta(t)}{2}\rt).
\eq
In particular, this inequality is of the form
\[
\dot X \geq Y - f(X), \quad f(r)=2\Psi\lt(\frac r2\rt).
\]
The function $f$ is continuous, increasing on $[0,\infty)$, and satisfies $f(0)=0$, thus the assumptions of Lemma \ref{lem:1dlc:basic} are satisfied. Consequently, Lemma \ref{lem:1dlc:basic} (iii) ensures that $Y(t)\to 0$, and together with \eqref{D_v:estimate} we deduce $D_v(t)\to 0$. Combining with the convergence of $D_\eta$ and $D_\omega$ concludes the proof.
 \end{proof}

%%%%%%%%%%%%%%%%%%%%%%%%%%%%%%
%
%
%
%
%
%
%
%
%
%%%%%%%%%%%%%%%%%%%%%%%%%%%%%%

\subsection{Coulomb--quadratic potential case}

We next turn to part (ii) of Proposition \ref{prop:1dlc:0}, where the interaction potential is given by \eqref{def_W_cc}. Unlike the $\lambda$-convex case, this potential is not convex, and thus contraction estimates cannot be applied directly. Instead, we analyze the coupled dynamics of characteristic separations and velocity differences through a suitable Lyapunov functional, which allows us to establish a uniform-in-time bound on $D_\eta(t)$.

We exploit the special structure of the Coulomb--quadratic potential, which reduces the dynamics to a system of two variables $(X,Y)$ similar to the $\lambda$-convex case but with an additional shift parameter $r>0$.  
The following auxiliary lemma provides the necessary uniform bound on $X(t)$, which in our application corresponds to the spatial diameter $D_\eta(t)$. 

\begin{lemma}\label{lem:aux:flock}
Let $(X,Y) \in C^1([0,\infty);[0,\infty) \times \R)$ satisfy
\bq\label{eq:auxsys}
\begin{split}
Y(t) - f(X(t)) \le &\dot{X}(t) \le Y(t), \\
&\dot{Y}(t) =-(X(t)-r)
\end{split}
\eq
for some $r>0$ and an increasing continuous function $f:[0,\infty) \to [0,\infty)$.
Then we have
\[
X(t) \le r + \max\lt\{ \sqrt{(X_0-r)^2+Y_0^2},\,\, \max\{Y_0,f(r)+1\} + \frac{r^2}{2} \rt\}
\]
for all $t\ge0$.
\end{lemma}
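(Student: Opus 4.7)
The plan is to introduce the Lyapunov quantity $V(t):=\tfrac12\bigl[(X(t)-r)^2 + Y(t)^2\bigr]$ and track its evolution on the two regions $A:=\{t:X(t)>r\}$ and $B:=\{t:X(t)<r\}$. A direct computation combined with the sandwich $Y-f(X)\le \dot X\le Y$ yields $\dot V = (X-r)(\dot X-Y)$, so $\dot V\le 0$ on $A$, while on $B$ one has $0\le \dot V\le (r-X)f(X)\le f(r)\dot Y$ after using $f(X)\le f(r)$ and $\dot Y=r-X\ge 0$. In particular $V$ is non-increasing on every $A$-interval, which is the mechanism I intend to use to convert a bound on $Y$ into a bound on $X$.

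\smallskip

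The central estimate is the uniform bound $\sup_{t\ge 0} Y(t)\le \max\{Y_0,f(r)+1\}+\tfrac{r^2}{2}$. Since $\dot Y=r-X$, $Y$ is non-decreasing on $B$ and non-increasing on $A$, so it suffices to bound $Y$ at the right endpoint of every maximal $B$-interval $[a,b]$ (with $X(a)=X(b)=r$, allowing $a=0$). At the left endpoint one has $Y(a)\le \max\{Y_0,f(r)\}$: if $a=0$ this is trivial, and if $a>0$ then the $A\to B$ transition forces $\dot X(a)\le 0$, which combined with $\dot X\ge Y-f(X)$ and $X(a)=r$ gives $Y(a)\le f(r)$. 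The crucial observation is that as soon as $Y$ exceeds $f(r)+1$ on $[a,b]$, the monotonicity of $f$ and the inclusion $X\le r$ yield $\dot X\ge Y-f(r)\ge 1$, so $X$ is strictly increasing at unit speed on the remainder of the excursion. Choosing $s\in[a,b]$ as the first time where $Y(s)=\max\{Y_0,f(r)+1\}$ (if no such $s$ exists the bound is immediate) and changing variables $t\mapsto X$ then gives
\[
Y(b)-Y(s)=\int_s^b (r-X)\,dt \le \int_{X(s)}^{r}(r-X)\,dX = \tfrac{(r-X(s))^2}{2}\le \tfrac{r^2}{2},
\]
where the last step uses the standing hypothesis $X(s)\ge 0$. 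Monotonicity of $Y$ on the two regions then extends this bound to every $t\ge 0$.

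\smallskip

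Once this is in hand the bound on $X$ follows quickly. If $X(t)\le r$, the claimed inequality is trivial; otherwise $t$ lies in some $A$-interval $[b_k,c_k]$, and monotonicity of $V$ there gives $(X(t)-r)^2/2\le V(b_k)$. If $b_k=0$ this produces $X(t)-r\le \sqrt{(X_0-r)^2+Y_0^2}$; if $b_k>0$ then $X(b_k)=r$ and $Y(b_k)\ge 0$, so $V(b_k)=Y(b_k)^2/2$ and the first step yields $X(t)-r\le \max\{Y_0,f(r)+1\}+r^2/2$. Taking the maximum of the two contingencies completes the proof. The main obstacle is the estimate on $Y$: the quantity $V-f(r)Y$ is non-increasing only on $B$, and a naive iteration over infinitely many $A$--$B$ cycles would leave open the possibility of unbounded amplification of $Y$ (and hence of $X$). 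What rules this out is precisely the interplay between the constraint $X\ge 0$ and the lower bound $\dot X\ge Y-f(r)$ on $B$: large values of $Y$ on $B$ both shorten the residence in $B$ (because $\dot X\ge 1$ forces $X$ to reach $r$ before travelling a distance exceeding $r$) and cap the increment of $Y$ on each excursion through the change-of-variables estimate. Identifying and exploiting this coupling is the essential technical point.
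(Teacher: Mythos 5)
Your proposal is correct and rests on the same core ideas as the paper's proof: the Lyapunov functional $V=\tfrac12[(X-r)^2+Y^2]$ with the two-sided bound $-f(X)(X-r)_+\le \dot V\le f(X)(r-X)_+$, the observation that $Y>f(r)+1$ together with $X\le r$ forces $\dot X\ge 1$, and the change of variables $t\mapsto X$ to extract the increment bound $Y(b)-Y(s)\le r^2/2$. The difference is organizational rather than conceptual. The paper fixes a target time $t$, traces back to the last hitting time $t_p$ of the line $X=r$, further traces back to the last time $t_l$ at which $Y\le f(r)+1$, and then must prove a separate claim that $X\le r$ throughout $[t_l,t_p]$ — which requires a minimization argument on $[s_1,t_p]$ to rule out excursions above $r$. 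You instead partition $[0,\infty)$ a priori into the open regions $A=\{X>r\}$ and $B=\{X<r\}$, observe that $Y$ is monotone on each maximal component, and note that at each $A\!\to\!B$ transition the one-sided derivative condition $\dot X(a)\le 0$ together with $X(a)=r$ gives $Y(a)\le f(r)$ directly. This makes the ``first time $Y$ reaches $\max\{Y_0,f(r)+1\}$'' automatically lie inside a single $B$-component, so the containment that the paper proves as a separate claim is built into the partition for free. Your formulation is arguably cleaner in that it makes explicit where the uniform bound $\sup_t Y(t)\le\max\{Y_0,f(r)+1\}+r^2/2$ comes from, and then the $X$-bound follows by reading off $V$ at the left endpoint of the $A$-component containing $t$. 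Both routes deliver the stated constant; yours avoids the backward-in-time claim at the small cost of a case check at the start of a $B$-component when it abuts a set $\{X=r\}$ of positive measure (handled, since $\dot X=0$ there forces $Y\le f(r)$).
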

\begin{proof}
We introduce the Lyapunov-type functional
\[
\calL(t):= \frac{1}{2}\lt((X-r)^2+Y^2\rt).
\]
Differentiating and using \eqref{eq:auxsys} gives
\[
\dot\calL(t)=(X(t)-r)\dot X(t)+Y(t)\dot Y(t)
=(X(t)-r)\lt(\dot X(t)-Y(t)\rt).
\]
Since $Y-f(X)\le \dot X\le Y$, we have $- f(X)\le \dot X-Y\le 0$, hence
\begin{equation}\label{eq:dL-two-sided}
- f(X(t)) (X(t)-r)_+ \le \dot\calL(t) \le f(X(t)) (r-X(t))_+,
\end{equation}
where, for a real number $a$, $a_+ := \max\{a,0\}$ denotes the positive part of $a$.

\medskip
\noindent {\it Step 1: the case $X(s)\ge r$ for all $s\in[0,t]$.} From \eqref{eq:dL-two-sided} we obtain $\dot{\mathcal L}(s)\le 0$ on $[0,t]$, hence
\[
\calL(t)\le \calL(0)=\frac{1}{2}\lt((X_0-r)^2 + Y_0^2\rt).
\]

\medskip
\noindent {\it Step 2: the case $X$ crosses $r$ on $[0,t]$.}
Let
\[
t_p:=\sup\{s\in[0,t]: X(s)=r\}
\]
be the last time before $t$ at which $X$ hits the line $X=r$. By continuity, on the interval $[t_p,t]$ the trajectory stays entirely on one side of $X=r$; namely, either $X(s)\ge r$ for all $s\in[t_p,t]$ or $X(s)\le r$ for all $s\in[t_p,t]$.

\smallskip
\noindent {\it Case A: $X(s)\ge r$ on $[t_p,t]$.}
By \eqref{eq:dL-two-sided}, $\dot{\mathcal L}(s)\le 0$ on $[t_p,t]$, and thus
\[
\calL(t) \le \calL(t_p) = \frac{1}{2}Y(t_p)^2.
\]
We now bound $Y(t_p)$. At $t_p$, we get $X(t_p)=r$. From the inequalities $Y-f(X)\le \dot X \le Y$, we obtain
\[
Y(t_p)-f(r) \le \dot X(t_p)  \le Y(t_p).
\]
Since $t_p$ is the last hitting time of the line $X=r$, we have $\dot X(t_p)\ge 0$, which in turn yields $Y(t_p)\ge 0$. 
If $Y(t_p)\le f(r)+1$, the desired bound follows immediately. Otherwise, assume $Y(t_p)>f(r)+1$. In this case, we introduce
\[
\scrB:= \{s\in[0,t_p]: Y(s) \le f(r)+1\},\quad
t_l:=
\begin{cases}
\sup \scrB,& \scrB\neq\emptyset,\\ 0,& \scrB=\emptyset.
\end{cases}
\]
By construction, we find
\bq\label{eq:yts}
Y(t_l) = \begin{cases} f(r)+1 &\text{if}\,\,\, \scrB \neq \emptyset,\\
Y_0 > f(r)+1&\text{if}\,\,\, \scrB = \emptyset.
\end{cases}
\eq
 
 \medskip

\noindent {\it Claim.} On the interval $[t_l,t_p]$ the trajectory never rises above the line $X=r$, in other words,
\[
X(s)\le r \quad \text{for all } s\in[t_l,t_p].
\]

 \medskip
\noindent {\it Proof of claim.}
If not, there exists $s_0\in[t_l,t_p]$ such that $X(s_0)>r$. We now set
\[
s_1:=\inf\{\,s\ge s_0:\ X(s)=r\,\}.
\]
Then it is clear that $s_1<t_p$. Let $\tau\in(s_1,t_p)$ be a point where $X$ attains its minimum on $[s_1,t_p]$. Then $\dot X(\tau)=0$, and thus
\[
Y(\tau)\le \dot X(\tau)+f(X(\tau))\le f(r).
\]
Since $Y(t_p)>f(r)+1$, continuity yields $\tau_*\in(\tau,t_p)$ with $Y(\tau_*)=f(r)+1$, contradicting the definition of $t_l$. This proves the claim.

On $[t_l,t_p]$, we have $Y\ge f(r)+1$ by definition of $t_l$ and $X\le r$ by the claim, thus
\[
\dot{X} \ge Y - f(X) \ge Y - f(r) \ge 1.
\]
Hence
\[
\frac{dY}{dX}=\frac{\dot Y}{\dot X}=\frac{-(X-r)}{\dot X} \le r-X\quad \text{on }[t_l,t_p],
\]
and integrating from $X(t_l)$ to $X(t_p)=r$ gives
\[
Y(t_p)-Y(t_l) \le  \int_{X(t_l)}^{r}(r-X)\,dX  \le  \int_{0}^{r}(r-X)\,dX  =  \frac{r^2}{2}.
\]
Since $Y(t_l)\le \max\{Y_0,f(r)+1\}$ due to \eqref{eq:yts}, we conclude
\[
Y(t_p) \le \max\{Y_0,f(r)+1\} + \frac{r^2}{2}.
\]
Therefore, we obtain
\[
\calL(t_p) = \frac{1}{2}Y(t_p)^2\le \frac{1}{2}\lt(\max\{Y_0, f(r)+1\} + \frac{r^2}{2}\rt)^2.
\]

\medskip
\noindent {\it Case B: $X(s)\le r$ on $[t_p,t]$.} In this case, we directly have $X(t)\le r$, thus the desired estimate
\[
X(t) \le  r+\max\lt\{ \sqrt{(X_0-r)^2+Y_0^2},\, \max\{Y_0,f(r)+1\}+\frac{r^2}{2}\rt\}
\]
is immediate since the right-hand side is greater than $r$.
 
 We finally combine the above estimates together with $\frac{1}{2}(X(t)-r)^2 \le \calL(t)$ to conclude the desired result.
\end{proof}

 We are now ready to establish a uniform-in-time bound on the diameter of the density support in the case of the repulsive Coulomb potential with quadratic confinement. 
The strategy is to track the dynamics of two extreme characteristics, whose separation $X(t)$ and velocity difference $Y(t)$ evolve according to an auxiliary system of the form treated in Lemma \ref{lem:aux:flock}. The nonlinear alignment term can be controlled using Lemma \ref{lem:Psi}, ensuring that the assumptions of Lemma \ref{lem:aux:flock} are satisfied with $r=2$. This will yield a uniform bound on $X(t)=D_\eta(t)$ for all times.

\begin{proof}[Proof of Proposition \ref{prop:1dlc:0} (ii)]
We choose $x,y \in {\rm supp}(\rho_0)$ such that $D_\eta(0) = x-y$ so that by construction 
\[
D_\eta(t) = \eta(t,x)-\eta(t,y), \quad t \geq 0.
\]
From the reformulated system, we find
\begin{align*}
\pa_t \eta(t,x) &= \omega(t,x) - \int_\R \Psi(\eta(t,x)-\eta(t,z))\rho_0(z) \,dz, \cr
\pa_t \omega(t,x) &= -\eta(t,x) +(\eta_c(t) +1), \cr
\pa_t \eta(t,y) &= \omega(t,y) - \int_\R \Psi(\eta(t,y)-\eta(t,z))\rho_0(z) \,dz, \cr
\pa_t \omega(t,y) &= -\eta(t,y) +(\eta_c(t) -1),
\end{align*}
where we used that for the rightmost/leftmost Lagrangian variables we get $\eta_r(x) = 1$ and $\eta_r(y) = -1$.

Setting
\[
X(t):= D_\eta(t)=\eta(t,x) -\eta(t,y), \quad Y(t):=\omega(t,x)- \omega(t,y),
\]
we derive
\begin{align*}
\dot{X}(t) &= Y(t) - \int_\R \lt(\Psi(\eta(t,x)-\eta(t,z)) - \Psi(\eta(t,y)-\eta(t,z))\rt)\rho_0(z) \,dz,\cr
\dot{Y}(t) &= - (X(t) - 2).
\end{align*}
 On the other hand, it follows from Lemma \ref{lem:Psi}
\[
 0\le \int_\R \lt(\Psi(\eta(t,x)-\eta(t,z)) - \Psi(\eta(t,y)-\eta(t,z))\rt)\rho_0(z) \,dz \le 2\Psi\lt(\frac{1}{2}X(t)\rt).
\]
Moreover, the order-preserving property of characteristics guarantees $X(t)>0$ for all $t\ge 0$.  Thus the pair $(X(t),Y(t))$ satisfies exactly the structural assumptions of Lemma \ref{lem:aux:flock} with 
\[
r=2 \quad \mbox{and} \quad f(r)=2\Psi\lt(\frac r2\rt). 
\]
Applying that lemma, we conclude that
\[
X(t)=D_\eta(t) \le 2 + \max\lt\{ \sqrt{(D_\eta(0)-2)^2+D_\omega(0)^2}, \ \max\{D_{\omega}(0),2\Psi(1)+1\}+2\rt\},
\]
for all $t\ge 0$, which is precisely the asserted uniform bound.
\end{proof}

%%%%%%%%%%%%%%%%%%%%%%%%%%%%%%
%
%
%
%
%
%
%
%
%
%%%%%%%%%%%%%%%%%%%%%%%%%%%%%
\section{One-dimensional dynamics with $(\lambda,\Lambda)$-convex interaction potential}\label{sec:m1}
 
 This section is devoted to the proof of Theorem \ref{thm_1:new}, which concerns the long-time asymptotics of \eqref{main_sys} in the one-dimensional setting with an attractive $(\lambda,\Lambda)$-convex interaction potential. Our goal is to establish quantitative decay rates for the characteristic diameters, as summarized in the following proposition.

\begin{proposition}\label{thm_1}Let $d=1$ and let $(\eta,v)$ be a global solution to \eqref{main_sys2} with sufficient regularity.  Suppose that $W$ satisfies \eqref{eq:lc:1d:ub}, and $\rho_0$ is compactly supported.
\begin{itemize}
\item[(i)]  If the communication weight function $\psi$ satisfies \eqref{psi:type1} (bounded and strictly positive near the origin), then the flow and velocity diameters decay exponentially fast:
\[
e^{-\psi_{\rm M} t} \lesssim D_\eta(t) + D_\omega(t) \lesssim e^{-\min\{\psi_{\rm m},\lambda/\psi_{\rm M}\} t},\quad D_v(t) \lesssim e^{-\min\{\psi_{\rm m},\lambda/\psi_{\rm M}\} t}
\]
for all $t\ge 0$.
\item[(ii)] If the communication weight $\psi$ satisfies \eqref{psi:type2} (weakly singular near the origin with exponent $\alpha \in (0,1)$), then the decay is only algebraic:
\[
D_\eta(t) = \Theta \lt(t^{-\frac{1}{\alpha}}\rt),\quad D_\omega(t)  = \Theta \lt(t^{-\frac{1}{\alpha} + 1}\rt), \quad  D_v(t) = \mathcal{O}  \lt(t^{-\frac{1}{\alpha}+1}\rt)
\]
as $t \to \infty$.
\end{itemize}
\end{proposition}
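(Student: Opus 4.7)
The plan is to reduce the analysis to a two-dimensional system of differential inequalities for a single extremal pair of characteristics, then extract decay rates by Lyapunov-type and integral arguments. Pick extremal points $(x^*,y^*)\in\supp(\rho_0)$ with $x^*-y^*=D_\eta(0)$, set $X(t):=\eta(t,x^*)-\eta(t,y^*)$ and $Y(t):=\omega(t,x^*)-\omega(t,y^*)$, and denote by $I(t)$, $J(t)$ the corresponding alignment and force integrals so that $\dot X=Y-I$ and $\dot Y=-J$. Since one-dimensional characteristics cannot cross, $X(t)=D_\eta(t)$ for all $t\ge0$, and the argument of Lemma \ref{lem:1dlc:basic}(i) yields $Y(t)>0$ throughout. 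The two-sided convexity \eqref{eq:lc:1d:ub} gives $\lambda X\le J\le\Lambda X$, while Corollary \ref{cor:Psi} applied to this pair (for which $\eta(t,x^*)-\eta(t,y^*)=D_\eta(t)=X(t)$) yields $\psi_{\rm m} X\le I\le\psi_{\rm M} X$ in case (i) and $A X^{1-\alpha}\le I\le\tfrac{2^\alpha B}{1-\alpha}X^{1-\alpha}$ in case (ii). Both are valid once $X\le R$, which holds for all large times by Proposition \ref{prop:1dlc:0}(i).

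For case (i), I would establish the upper bound via a hypocoercivity-style quadratic Lyapunov functional $\calL=a Y^2+2cXY+b X^2$, with coefficients $(a,b,c)$ tuned so that $\calL$ is positive-definite and $\dot\calL\le-2\mu\calL$ for $\mu$ of order $\min\{\psi_{\rm m},\lambda/\psi_{\rm M}\}$. This rate reflects the slowest eigenvalue of the linearized system $\dot X=Y-\psi X$, $\dot Y=-\lambda X$, whose characteristic polynomial $\mu^2+\psi\mu+\lambda=0$ interpolates between $\psi/2$ in the underdamped regime ($\psi^2<4\lambda$) and $\lambda/\psi$ in the overdamped regime ($\psi^2>4\lambda$). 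The lower bound $D_\eta(t)\gtrsim e^{-\psi_{\rm M} t}$ is immediate from $\dot X\ge Y-\psi_{\rm M} X\ge-\psi_{\rm M} X$ together with $Y>0$. For $D_v$, the inequality $D_v\le D_\omega+2\Psi(D_\eta/2)\le D_\omega+\psi_{\rm M} D_\eta$ (from \eqref{D_v:estimate} and the bound $\Psi(r)\le\psi_{\rm M} r$ for $r\le R$) transfers the exponential decay to the velocity.

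For case (ii), I would start from $V_0:=\tfrac12 Y^2+\tfrac{\lambda}{2}X^2$, for which a direct computation using $Y>0$, $J\ge\lambda X$, and $I\ge A X^{1-\alpha}$ gives
\[
\dot V_0 = -Y(J-\lambda X)-\lambda X I \le -\lambda A X^{2-\alpha}.
\]
Combining this dissipation with the integral identity $Y(t)=\int_t^\infty J\,ds$ (using $Y(\infty)=0$), which yields $\lambda\int_t^\infty X\,ds\le Y(t)\le\Lambda\int_t^\infty X\,ds$, together with the emergent slow-manifold balance $Y\sim AX^{1-\alpha}$, I would extract $X(t)=\Theta(t^{-1/\alpha})$ and thus $Y(t)=\Theta(t^{-1/\alpha+1})$. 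The lower bound on $X$ is obtained by contradiction: if $X(t_n)\ll t_n^{-1/\alpha}$ along a sequence, then $Y\ge\lambda\int_t^\infty X\,ds$ stays large compared with $\tfrac{2^\alpha B}{1-\alpha}X^{1-\alpha}$, forcing $\dot X\ge Y-\tfrac{2^\alpha B}{1-\alpha} X^{1-\alpha}>0$ at such times and contradicting the decay. Finally, $D_v\le D_\omega+2\Psi(D_\eta/2)\lesssim D_\omega+D_\eta^{1-\alpha}\lesssim t^{-1/\alpha+1}$ yields the stated velocity bound.

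The main obstacle is pinning down the sharp rate $t^{-1/\alpha}$ in case (ii). Using $V_0$ alone only gives $X\lesssim t^{-(1-\alpha)/\alpha}$, off by a full power, because $V_0$ is dominated on the slow manifold by $Y^2\sim X^{2-2\alpha}\gg X^2$, so $\sqrt{V_0}$ controls $X^{1-\alpha}$ rather than $X$. Resolving this requires either a refined Lyapunov functional that isolates the slow component (for instance, tracking the deviation $Y-AX^{1-\alpha}$ so that the effective reduced dynamics $\dot X\approx-\lambda X^{1+\alpha}/(A(1-\alpha))$ emerge explicitly) or a bootstrap combining the integrability $\int_0^\infty X^{2-\alpha}\,dt<\infty$ with upgraded integral estimates on $Y$; the same slow-manifold analysis also underlies the contradiction argument for the lower bound.
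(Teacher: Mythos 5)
Your reduction of the problem to a two-dimensional system of ODIs for the extremal pair $(X,Y)=(D_\eta,D_\omega)$, using the order-preserving property of characteristics, Corollary~\ref{cor:Psi}, and the two-sided convexity \eqref{eq:lc:1d:ub}, matches the paper's reduction exactly. The gaps are in extracting the rates from the ODIs, in both parts.

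For part (i), the quadratic-form hypocoercivity ansatz $\mathcal{L}=aY^2+2cXY+bX^2$ is precisely the approach the paper explicitly improves upon. If you carry out the computation, the one-sided nature of the inequalities forces a choice of sign of $c$; with $c>0$ the term $2cY^2$ from $2cY\dot X\le 2cY(Y-c_2X)$ has the wrong sign, and with $c<0$ the term $2cX\dot Y\le -2c\Lambda X^2$ picks up $\Lambda$, so the achievable decay constant necessarily deteriorates as $\Lambda$ grows, whereas the stated rate $\min\{\psi_{\rm m},\lambda/\psi_{\rm M}\}$ is $\Lambda$-independent. The paper's argument (Lemma~\ref{lem:ODI_1}) instead uses $X\ge 0$, $Y>0$ to prove that the barrier $Y(t)\le c_1X(t)$ is reached in finite time and then propagates forward (once touched, $\dot X\ge 0$ while $\dot Y<0$ so the inequality persists), which immediately yields $\dot Y\le -(\lambda/c_1)Y$ and hence pure decay of $Y$ at rate $\lambda/c_1$; plugging this back into $\frac{d}{dt}(\lambda X^2+Y^2)\le -2c_2(\lambda X^2+Y^2)+2c_2 Y^2$ closes the estimate. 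Your lower bound $X\gtrsim e^{-\psi_{\rm M}t}$ via $\dot X\ge -\psi_{\rm M}X$ is fine.

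For part (ii), you correctly diagnose that $V_0=\tfrac12Y^2+\tfrac{\lambda}{2}X^2$ is off by a power, and your "slow-manifold" intuition $Y\sim X^{1-\alpha}$ is exactly the right balance, but you do not produce a proof of the sharp two-sided rate, and the suggested repairs (refined Lyapunov, bootstrap from $\int X^{2-\alpha}<\infty$) are left at the level of a plan. The paper's key missing ingredient is a \emph{phase-plane reparametrization}: since $Y$ is strictly decreasing to $0$ (Lemma~\ref{lem:1dlc:basic}), one can write $X(t)=g(Y(t))$ and set $h:=g^{1-\alpha}$; the ODIs translate into differential inequalities for $h(y)$ in the $y$-variable, and the same kind of persistence-of-barrier argument as in (i) establishes $y/c_1\le h(y)\le 2y/c_2$ for $y$ near $0$ (the upper barrier needs the smallness condition \eqref{dom:cutoff} and positivity of $\alpha$). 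With this two-sided linear behaviour of $h$, the "clock" identities $\tfrac{1}{\Lambda}\int_{Y(t)}^{Y(t_i)}\frac{dy}{g(y)}\le t-t_i\le \tfrac{1}{\lambda}\int_{Y(t)}^{Y(t_i)}\frac{dy}{g(y)}$ integrate out to $Y(t)=\Theta(t^{-(1-\alpha)/\alpha})$ and hence $X(t)=g(Y(t))=\Theta(t^{-1/\alpha})$ — both upper and lower bounds in one stroke. Your contradiction sketch for the lower bound also has a gap: smallness of $X$ at isolated times $t_n$ does not force $\int_{t_n}^\infty X\,ds$ (and hence $Y(t_n)$) to be comparatively large, so the conclusion $\dot X(t_n)>0$ does not follow; the paper's barrier on $h$ avoids this issue by giving uniform two-sided control on a full neighbourhood of the origin in $y$.
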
 

\begin{remark}
In case (ii), the algebraic decay rates reflect the weaker alignment effect induced by the singular weight.  
While the spatial diameter $D_\eta(t)$ decays at the dominant rate $t^{-\frac1\alpha}$,  
the auxiliary variable $\omega$ and the velocity $v$ inherit slower convergence, 
decaying only at order $t^{-\frac1\alpha+1}$.  
This discrepancy highlights the difference between the contraction of trajectories 
and the relaxation of velocity fluctuations.
\end{remark}

To investigate quantitative decay estimates, we make use of the local assumptions on the communication weights \eqref{psi:type1} and \eqref{psi:type2}. 
In particular, Corollary \ref{cor:Psi} provides Lipschitz-type control of the primitive $\Psi$, which translates the alignment interaction into inequalities involving the pairwise distances of characteristics. 
Moreover, by Proposition \ref{prop:1dlc:0}, we already know that $D_\eta(t)\to 0$ as $t\to\infty$, and hence there exists $t_0>0$ such that
\[
D_\eta(t) \le R \quad \text{for all } t \ge t_0.
\]

In this case, Corollary \ref{cor:Psi}, combined with the one-dimensional relation \eqref{main_sys3}${}_2$ and the $(\lambda, \Lambda)$-convexity bound \eqref{eq:lc:1d:ub}, yields the differential inequality
\[
\pa_t(\omega(t,x)-\omega(t,y)) \ge -\Lambda(\eta(t,x)-\eta(t,y)).
\]
Recalling that $X(t)$ and $Y(t)$ denote the pairwise deviations 
\[
X(t) := \eta(t,x)-\eta(t,y), \quad Y(t) := \omega(t,x)-\omega(t,y),
\]
the inequality above reads
\[
\dot{Y}(t) \ge -\Lambda X(t).
\]
Collecting these estimates, we deduce that the dynamics of $(X,Y)$ are governed, for all $t\ge t_0$, by the following system of ODIs:
\bq\label{eq:1dlc:master}
\begin{aligned}
Y (t)- c_1 X^{1-\alpha}(t)\le &\dot{X}(t) \le Y(t) - c_2 X^{1-\alpha}(t),\\
-\Lambda X(t) \le &\dot{Y}(t) \le -\lambda X(t)
\end{aligned}
\eq
with some constants $c_1 \ge c_2 > 0$ and $\lambda >0$. Here the exponent $\alpha \in [0,1)$ encodes the local behavior of $\psi$ near the origin as specified in \eqref{psi:type1}--\eqref{psi:type2}. The system \eqref{eq:1dlc:master} will serve as the reduced model for analyzing the large-time dynamics in the one-dimensional case.

%%%%%%%%%%%%%%%%%%%%%%%%%%%%%%%%%%%%
%
%
%
%
%
%
%
%
%
%
%%%%%%%%%%%%%%%%%%%%%%%%%%%%%%%%%%%%
\subsection{Bounded communication weights}\label{sec:1lb}

We begin with the case of bounded communication weights, which corresponds to the ODI system \eqref{eq:1dlc:master} with $\alpha=0$. In this case, the alignment operator is Lipschitz continuous, and the interaction terms lead to exponential convergence. The precise estimate is stated as follows.

\begin{lemma}\label{lem:ODI_1}
Let $(X,Y)$ be a $C^1([0,\infty);[0,\infty) \times \R)$ curve satisfying  
\bq \label{ODI_1}
\begin{aligned}
Y(t) - c_1X(t) \le &\dot{X}(t) \le Y(t) - c_2 X(t), \\
-\Lambda X(t) \le &\dot{Y}(t) \le -\lambda X(t)
\end{aligned}
\eq
for some $\Lambda \ge \lambda >0$ and $c_1 \ge c_2 >0$. Then $Y(t)>0$ for all $t\ge0$, and we have the two-sided bound
\bq\label{exp:lbub}
C_1 e^{-c_1 t}  \leq  X(t) + Y(t) \leq C_2 e^{-\min\lt\{c_2,\lambda/c_1 \rt\}t}, \quad t \geq 0
\eq
for some positive constants $C_1, C_2$ independent of $t$.
\end{lemma}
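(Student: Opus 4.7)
The proof proceeds in three steps, combining a sign argument with a hypocoercive Lyapunov estimate.

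First I would establish the strict positivity $Y(t)>0$ by contradiction. If $Y(t_0)\le 0$ for some $t_0\ge 0$, then $\dot Y\le -\lambda X\le 0$ forces $Y(t)\le Y(t_0)\le 0$ for all $t\ge t_0$. Substituting into the upper ODI on $\dot X$ gives $\dot X\le Y(t_0)-c_2 X$, and a Gronwall comparison drives $X$ toward $Y(t_0)/c_2<0$, contradicting $X\ge 0$; the borderline case $Y(t_0)=0$ with $X(t_0)>0$ is ruled out because $\dot Y(t_0)<0$ would push $Y$ strictly negative immediately after $t_0$, reducing to the previous case. With $Y>0$ in hand, the lower bound in \eqref{exp:lbub} is immediate: the lower ODI yields $\dot X\ge -c_1 X$, so $X(t)\ge X(0)e^{-c_1 t}$ by Gronwall, hence $X+Y\ge X(0)e^{-c_1 t}$.

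For the upper bound I would work with a hypocoercive-type modified energy
\[
\mathcal{L}(t):=\lambda X(t)^2+Y(t)^2-\delta X(t)Y(t),
\]
with $\delta>0$ small enough to ensure $\mathcal{L}$ is positive definite. The unmodified energy $\lambda X^2+Y^2$, differentiated via the upper ODIs, only produces the dissipation $-2\lambda c_2 X^2$; adding the cross term $-\delta XY$ generates additional $Y^2$-dissipation, at the price of invoking both the upper bound on $\dot Y$ and the lower bound on $\dot X$, since the cross term contributes factors with mixed signs. A careful computation, followed by Young's inequality on the residual $XY$ term, yields $\dot{\mathcal L}\le -\kappa\mathcal{L}$ for some $\kappa>0$; choosing $\delta$ proportional to an appropriate combination of $\lambda,c_1,c_2,\Lambda$ brings $\kappa$ to the order of $\min\{c_2,\lambda/c_1\}$, from which the claimed upper bound on $X+Y$ follows. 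An alternative path, which I would use as a consistency check, is the change of variables $Z:=Y-\mu X$ with $\mu:=\lambda/c_1$, for which the identity $\mu c_1=\lambda$ collapses the cross terms and yields the clean inequality $\dot Z\le -\mu Z$; feeding $Y\le \mu X+Z(0)e^{-\mu t}$ back into $\dot X\le Y-c_2 X$ then gives, via Duhamel, the exponential decay of $X$ at the advertised rate.

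The main obstacle is precisely the hypocoercivity: neither $X$ nor $Y$ individually satisfies a decaying dissipation, and the two rates $c_2$ and $\lambda/c_1$ appearing in the final bound correspond to two independent mechanisms, namely the direct damping of $X$ via $c_2$ and the slow-eigenvalue decay of the linearized $(X,Y)$ system. Balancing the cross term via Young's inequality so as to simultaneously control $X^2$ and $Y^2$ with the correct rate is the delicate point, and verifying that both coefficient regimes $c_2\ge\lambda/c_1$ and $c_2<\lambda/c_1$ are captured by a single argument may require either splitting into subcases or refining the Lyapunov functional (for instance, by choosing $\mu=\lambda/c_1$ in the overdamped regime $c_1 c_2\ge\lambda$ and $\mu=c_2$ in the opposite one).
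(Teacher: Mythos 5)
Your positivity argument and the lower bound are fine; the bound $\dot X \ge -c_1 X$ (valid once $Y>0$ is known) is actually a slightly more direct route to the lower estimate than the paper's, which instead integrates $\frac{d}{dt}(\Lambda X^2 + Y^2)\ge -2c_1(\Lambda X^2 + Y^2)$, and the two agree up to constants. The gap is in the upper bound, and it is structural, not just a matter of constants.

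The modified energy $\mathcal{L}=\lambda X^2 + Y^2 -\delta XY$ cannot reach the claimed rate. Estimating the cross term $-\delta\,\partial_t(XY)$ forces you to use the \emph{lower} bound $\dot Y\ge -\Lambda X$, i.e. $-\delta X\dot Y \le \delta\Lambda X^2$, and Young's inequality on the leftover $\delta c_1 XY$ then pins $\delta$ to $\delta \lesssim \lambda c_2/(\Lambda + c_1^2)$, giving a decay rate that collapses as $\Lambda\to\infty$. The lemma's rate $\min\{c_2,\lambda/c_1\}$, by contrast, carries no trace of $\Lambda$. The paper records precisely this obstruction in the remark following the lemma, explicitly contrasting its argument with ``standard hypocoercivity methods, where the presence of $\Lambda$ typically reduces the decay rate.'' Your alternative route is closer in spirit but also does not close: the derivation of $\dot Z\le -\mu Z$ for $Z=Y-\mu X$, $\mu=\lambda/c_1$, is correct (the cross terms cancel and $-\mu Y=-\mu Z-\mu^2 X\le -\mu Z$ since $X\ge 0$), but feeding $Y\le\mu X + Z_0 e^{-\mu t}$ back into $\dot X\le Y-c_2 X$ yields $\dot X\le (\mu-c_2)X + |Z_0|e^{-\mu t}$: when $\mu\ge c_2$ (i.e. $\lambda\ge c_1 c_2$, a perfectly admissible regime) the homogeneous part does not decay at all, and when $\mu<c_2<2\mu$ the Duhamel convolution gives rate $c_2-\mu<\mu=\min\{c_2,\mu\}$. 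Switching to $\mu=c_2$ in the other regime does not collapse the cross term; one needs $\mu^2-c_1\mu+\lambda\ge 0$, i.e. $c_2^2-c_1c_2+\lambda\ge 0$, which can fail. What the paper actually does is a phase-plane barrier argument exploiting $X\ge 0$: the cone $\{Y\le c_1 X\}$ is forward-invariant (on its boundary $\dot X\ge 0$ while $\dot Y<0$) and is reached in finite time; inside it $\dot Y\le -\lambda X\le -(\lambda/c_1)Y$, giving exponential decay of $Y$ with a $\Lambda$-free rate, which is then inserted as a source into $\frac{d}{dt}(\lambda X^2+Y^2)\le -2c_2(\lambda X^2+Y^2)+2c_2Y^2$ and Gronwalled. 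That invariant-cone step is the missing ingredient in your proposal.
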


\begin{remark}
The estimate \eqref{exp:lbub} provides both lower and upper exponential bounds for $X(t)+Y(t)$. In particular, the upper decay rate depends only on $c_2$ and $\lambda / c_1$, and is therefore sharper than the one obtained by standard hypocoercivity methods, where the presence of $\Lambda$ typically reduces the decay rate when $\Lambda$ is large.  
The key ingredient of the proof is that $X(t)\ge0$ holds for all $t\ge0$, which allows us to employ a phase-plane argument to derive an improved exponential decay.
\end{remark}

\begin{proof}[Proof of Lemma \ref{lem:ODI_1}]
We first recall from Lemma \ref{lem:1dlc:basic} (i) that $Y(t)>0$ for all $t\ge0$. With $X(t),Y(t)>0$, differentiating and using \eqref{ODI_1} yields
\[
\frac{d}{dt}\lt(\Lambda X^2 +Y^2\rt) \ge -2c_1\Lambda X^2 \ge -2c_1\lt(\Lambda X^2 +Y^2\rt),
\]
so that
\[
\Lambda X^2(t) + Y(t)^2 \ge \lt(\Lambda X_0^2 +Y_0^2\rt)e^{-2c_1 t},
\]
which establishes the lower bound in \eqref{exp:lbub}.

For the upper bound, observe  \eqref{ODI_1} implies that
\[
\frac{d}{dt}\lt(\lambda X^2 +Y^2\rt) \le -2 c_2\lambda X^2 \le 0,
\]
which immediately yields
\[
\lambda X^2(t) + Y^2(t) \le \lambda X_0^2 +Y_0^2.
\]
This bound, however, does not yet provide quantitative decay. To obtain a sharper decay estimate, we rewrite the inequality as
\bq\label{eq:folk:pre}
\frac{d}{dt}\lt(\lambda X^2 +Y^2\rt) \le -2 c_2 \lt(\lambda X^2 +Y^2\rt) + 2c_2 Y^2.
\eq
Hence, the key step is to obtain a quantitative decay estimate for $Y(t)$, and then close the estimate for $\lambda X^2 +Y^2$.

To this end, we claim that there exists $t_i\ge0$ such that 
\bq\label{Y:claim}
Y(t) \le c_1 X(t)\quad \text{for all }  t \ge t_i.
\eq
Suppose first that $Y(t_i) \le c_1 X(t_i)$ for some $t_i \ge 0$. Then, if $Y(t_*) = c_1 X(t_*)$  for some $t_* \ge t_i$, we have
\[
\dot{X}(t_*) \ge 0\quad \mbox{while } \dot{Y}(t_*) <0,
\]
due to \eqref{ODI_1}. This provides that $Y(t_* +\e) < Y(t_*)= c_1 (X_*) \le c_1 X(t_*+\e)$ for all sufficiently small $\e>0$. This shows that once the relation holds, it remains true thereafter.

It remains to prove that such a finite $t_i$ exists. Define
\[
t_i:= \inf \lt\{t \ge 0: Y(t) \le c_1 X(t) \rt\}.
\]
If $Y_0 \le c_1 X_0$, then $t_i=0$. Otherwise, for $t \in [0,t_i)$, we have $\dot{X}(t) \ge 0$, and hence
\[
\dot{Y}(t) \le - \lambda X_0.
\]
Thus $Y$ decreases at least linearly until the threshold $Y = c_1 X$ is reached, and consequently $t_i<\infty$. In particular, it follows from 
\[
c_1 X_0 \le c_1 X(t_i) = Y(t_i) \le Y_0 - \lambda X_0 t_i,
\]
that
\[
t_i \le \frac{1}{\lambda}\lt(\frac{Y_0}{X_0} - c_1 \rt)_+.
\]
Having established the claim \eqref{Y:claim}, we can combine it with 
$\dot{Y}\le -\lambda X$ to obtain
\[
\dot{Y}(t) \le -\frac{\lambda}{c_1}Y(t)\quad \text{for all }  t \ge t_i.
\]
By Gr{\"o}nwall's inequality, we then find
\[
Y(t) \le Y(t_i)e^{-\frac{\lambda}{c_1}(t-t_i)} \quad \text{for all }  t \ge t_i,
\]
and since $Y$ is monotone decreasing, this further implies
\[
Y(t)   \le Y_0 e^{\frac{Y_0}{c_1 X_0}} e^{-\frac{\lambda}{c_1}t} \quad \text{for all }  t \ge 0.
\]
At this point, the decay of $Y$ allows us to return to \eqref{eq:folk:pre} and apply Gr\"onwall's lemma again to deduce
\[
\lambda X(t)^2 + Y(t)^2 \le \lt(\lambda X_0^2 +Y_0^2\rt)e^{-2c_2t} + 2c_2 Y_0^2e^{\frac{2Y_0}{c_1X_0}}\lt(\frac{1}{c_2} + \frac{c_1}{\Lambda}\rt) e^{- 2\mathfrak{h}(c_2,\lambda/c_1)t}
\]
for all $t\ge 0$, where
\[
\mathfrak{h}\lt(c_2,\frac\lambda{c_1} \rt):=\frac{1}{\frac{1}{c_2}+\frac{c_1}{\lambda}}= \frac{c_2 \lambda}{\lambda + c_1 c_2}
\]
is the harmonic mean of $c_2$ and $\frac{\lambda}{c_1}$. This establishes the desired upper bound and completes the proof of Lemma \ref{lem:ODI_1}. 
\end{proof}

We now prove Proposition \ref{thm_1} (i), concerning the bounded communication case.

 \begin{proof}[Proof of Proposition \ref{thm_1} (i)]
By Proposition \ref{prop:1dlc:0} and Corollary \ref{cor:Psi} (i), for all sufficiently large times $t\ge t_0$ the pair
\[
X(t):=D_\eta(t), \quad Y(t):=D_\omega(t)
\]
satisfies the ODI system \eqref{ODI_1} with the choices
\[
c_1=\psi_{\rm M},\quad c_2=\psi_{\rm m},\quad (\lambda, \Lambda)=\text{constants in \eqref{eq:lc:1d:ub}}.
\]
Applying Lemma \ref{lem:ODI_1} then yields the two-sided exponential bounds
\[
e^{-\psi_{\rm M} t} \lesssim  X(t)+Y(t) \lesssim  e^{-\min\{\psi_{\rm m}, \frac\lambda{\psi_{\rm M}}\} t}, \quad t\ge t_0.
\]
Since $t_0$ is fixed, this implies the stated estimate for $D_\eta+D_\omega$ on $[0,\infty)$ after adjusting constants.

Finally, using \eqref{D_v:estimate} together with Corollary \ref{cor:Psi} (i) gives
\[
D_v(t) \lesssim D_\omega(t)+ D_\eta(t),
\]
hence $D_v(t)$ decays with the same upper rate $e^{-\min\lt\{\psi_{\rm m}, \frac\lambda{\psi_{\rm M}}\rt\} t}$. This completes the proof.
\end{proof}

%%%%%%%%%%%%%%%%%%%%%%%%%%%%%%%%%%%%
%
%
%
%
%
%
%
%
%
%
%%%%%%%%%%%%%%%%%%%%%%%%%%%%%%%%%%%%

\subsection{Weakly singular communication weights}\label{sec:1dws}
 In Lemma \ref{lem:ODI_1}, we exploited the interplay between trajectories and velocity differences to derive exponential decay in the bounded case. A similar trajectory-based argument will also be effective for the weakly singular setting $\alpha \in (0,1)$, although the analysis becomes more delicate due to the lack of uniform coercivity. We first set up a general framework that reduces the dynamics to a one-dimensional relation between $X$ and $Y$.
 
 Let $(X(t),Y(t))$ be a $C^1([0,\infty); (0,\infty) \times \R)$ solution to \eqref{eq:1dlc:master}. From Lemma \ref{lem:1dlc:basic}, we know that $Y(t)$ is strictly decreasing and tends to $0$ as $t\to\infty$. Thus $Y:[0,\infty)\to(0,Y_0]$ is invertible, and we may reparametrize $X$ in terms of $Y$. More precisely, if $\tau:(0,Y_0]\to[0,\infty)$ denotes the inverse of $Y$, we set
 \[
 g(y) := X(\tau(y)), \quad y \in (0,Y_0], \quad g(0):=0.
 \]
 By construction, 
 \bq\label{eq:connect}
X(t)=g(Y(t))\quad \text{for all } t\ge0.
\eq
 Moreover, since $(X,Y)$ is $C^1$ in time, the function $g$ is continuous on $[0,Y_0]$ and differentiable on $(0,Y_0]$, with $g(y)>0$ whenever $y>0$. This reparametrization allows us to reduce the two-dimensional dynamics of $(X,Y)$ to a one-dimensional relation between $X$ and $Y$, which is a standard device in the phase-plane method.

From the second line of \eqref{eq:1dlc:master} we have $\dot{Y}\le -\lambda X=-\lambda g(Y)$, and thus,
\[
\frac{dY(t)}{\lambda  g(Y(t))} \le -dt.
\]
For any fixed $t_i \ge 0$, we integrate both sides in $[t_i,t]$ to obtain
\bq\label{clock:ub}
t - t_i \le \frac{1}{\lambda}\int_{Y(t)}^{Y(t_i)} \frac{dy}{g(y)}.
\eq
A symmetric manipulation of the lower bound $\dot{Y}\ge -\Lambda X=-\Lambda g(Y)$ gives
\bq\label{clock:lb}
 \frac{1}{\Lambda}\int_{Y(t)}^{Y(t_i)} \frac{dy}{g(y)} \le t - t_i.
\eq
These two relations will provide upper and lower bounds on $Y(t)$, respectively. The bounds on $X(t)$ will then follow from \eqref{eq:connect}.

To derive quantitative bounds, we analyze the asymptotic behavior of $g$ near $0$. This can be achieved by examining the differential inequalities satisfied by $g$. Differentiating the relation \eqref{eq:connect} yields
\[
\dot{X}(t) = g'(Y(t))\dot{Y}(t).
\]
Since $-\Lambda X \le \dot{Y} \le -\lambda X$, the sign of $\dot{X}(t)$ and $\dot{Y}(t)$ must be carefully identified to determine the direction of the inequality for $g'$. Note in particular that $\dot{Y}<0$, while the sign of $\dot{X}$ can be characterized in specific regimes.
Indeed, we can readily find that
\bq\label{eq:g1}
g'(y) \le \frac{c_1 g^{1-\alpha}(y)- y}{\lambda g(y)}\quad \text{if }  g^{1-\alpha}(y) \le \frac{y}{c_1},
\eq
and
\bq\label{eq:g2}
g'(y) \ge \frac{c_2 g^{1-\alpha}(y)- y}{\Lambda g(y)}\quad \text{if }  g^{1-\alpha}(y) \ge \frac{y}{c_2}.
\eq
These inequalities describe the local trajectory of $(X,Y)$ in the phase plane and will play a crucial role in capturing the large-time asymptotics.

With these preparations in place, we can now establish the following lemma, which provides the precise algebraic decay of $(X,Y)$ for weakly singular communication weights.

\begin{lemma}\label{lem:ODI_2}
Let $(X,Y)$ be a $C^1([0,\infty);(0,\infty) \times \R)$ curve satisfying
\bq \label{ODI_2}
\begin{aligned}
Y(t) - c_1X^{1-\alpha}(t) \le &\dot{X}(t) \le Y(t) - c_2X^{1-\alpha}(t), \\
-\Lambda X(t) \le &\dot{Y}(t) \le -\lambda X(t), 
\end{aligned}
\eq
where $\alpha \in (0,1)$ with $\Lambda \ge \lambda >0$ and $c_1 \ge c_2 >0$. Then we have
\[
X(t) = \Theta \lt(t^{-\frac{1}{\alpha}}\rt), \quad Y(t) = \Theta \lt(t^{-\frac{1}{\alpha}+1 }\rt)
\]
as $t \to \infty$.
\end{lemma}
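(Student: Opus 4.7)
The plan is to work with the phase-plane reparametrization already set up in the excerpt. Since \eqref{ODI_2} implies the hypotheses of Lemma \ref{lem:1dlc:basic} with $\kappa=1$ and $f(r)=c_1 r^{1-\alpha}$, the function $Y$ strictly decreases from $Y_0$ to $0$, so $g(y):=X(Y^{-1}(y))$ is well-defined and $C^1$ on $(0,Y_0]$. The heart of the argument will be to establish the asymptotic sandwich
\[
g(y)=\Theta\bigl(y^{1/(1-\alpha)}\bigr)\quad\text{as }y\to 0^+,
\]
and then plug it into the clock inequalities \eqref{clock:ub}--\eqref{clock:lb}.

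For the sandwich, I would fix constants $M_+>c_2^{-1/(1-\alpha)}$ and $M_-\in(0,c_1^{-1/(1-\alpha)})$ and introduce the barrier curves $\Phi_\pm(y):=M_\pm y^{1/(1-\alpha)}$. To establish the upper bound $g\le \Phi_+$ for small $y$, first note that in the region $\{g>\Phi_+\}$ one has $Y<X^{1-\alpha}/M_+^{1-\alpha}$, and since $M_+^{1-\alpha}>1/c_2$ the upper line in \eqref{ODI_2} yields $\dot X\le -\varepsilon X^{1-\alpha}$ with $\varepsilon:=c_2-M_+^{-(1-\alpha)}>0$; integrating, $X^\alpha$ decreases at least linearly, so $X$ would vanish in finite time, forcing the trajectory to enter $\{g\le\Phi_+\}$. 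To rule out re-crossings, at any contact point $\bar y$ the geometric constraint yields $g'(\bar y)\le \Phi_+'(\bar y)=\tfrac{M_+}{1-\alpha}\bar y^{\alpha/(1-\alpha)}$, while \eqref{eq:g2}---which applies since $c_2 g(\bar y)^{1-\alpha}=c_2 M_+^{1-\alpha}\bar y>\bar y$---gives
\[
g'(\bar y)\ge \frac{c_2 M_+^{1-\alpha}-1}{\Lambda M_+}\,\bar y^{-\alpha/(1-\alpha)},
\]
and for $\bar y$ sufficiently small the latter dominates the former, contradiction. The lower barrier is handled analogously using \eqref{eq:g1} and the observation that in $\{g<\Phi_-\}$ one has $\dot X\ge \varepsilon' Y>0$ with $\varepsilon':=1-c_1 M_-^{1-\alpha}>0$, which is incompatible with $X(t)\to 0$.

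Substituting $g(y)=\Theta(y^{1/(1-\alpha)})$ into \eqref{clock:ub}--\eqref{clock:lb} and evaluating the elementary integral $\int y^{-1/(1-\alpha)}dy = -\tfrac{1-\alpha}{\alpha}\,y^{-\alpha/(1-\alpha)}+\text{const}$ give $t=\Theta\bigl(Y(t)^{-\alpha/(1-\alpha)}\bigr)$ as $t\to\infty$. Inverting yields $Y(t)=\Theta\bigl(t^{-(1-\alpha)/\alpha}\bigr)=\Theta\bigl(t^{-1/\alpha+1}\bigr)$, and hence $X(t)=g(Y(t))=\Theta\bigl(Y(t)^{1/(1-\alpha)}\bigr)=\Theta\bigl(t^{-1/\alpha}\bigr)$.

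The hard part is the barrier comparison: one must verify that the phase-plane inequalities force $|g'|$ to blow up at rate $\bar y^{-\alpha/(1-\alpha)}$ at any re-crossing of $\Phi_\pm$, whereas $\Phi_\pm'$ grow only at rate $\bar y^{\alpha/(1-\alpha)}$. The resulting separation by a factor of $\bar y^{-2\alpha/(1-\alpha)}$ is precisely the slow-manifold attraction that renders each barrier one-sided invariant for small $\bar y$ and pins down the final decay exponents.
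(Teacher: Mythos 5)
Your argument is correct and follows the same phase-plane strategy as the paper: reparametrize $X = g(Y)$, establish a two-sided power-law sandwich $g(y)=\Theta(y^{1/(1-\alpha)})$ near $y=0$ by showing the barrier curves are one-sided invariant (via the slope bounds \eqref{eq:g1}--\eqref{eq:g2} at contact points) and not avoidable (your finite-time-vanishing / increase-versus-$X\to 0$ arguments, where the paper instead uses a clock-integral and a pigeonhole argument), and then feed the sandwich into the clock inequalities \eqref{clock:ub}--\eqref{clock:lb}. The only presentational difference is that the paper changes variables to $h=g^{1-\alpha}$ so that the barriers become straight lines $y/c_1$ and $2y/c_2$, which makes the contact-point slope comparison exact (the numerator in \eqref{eq:h1} vanishes identically) rather than requiring smallness of $\bar y$ as your upper-barrier comparison does; the content is otherwise identical.
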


\begin{proof}
Let us denote a function $g:[0,Y_0] \to [0,\infty)$ as defined in \eqref{eq:connect}. We introduce
\[
h(y):= g^{1-\alpha}(y).
\]
This change of variables is natural because the exponent $1-\alpha$ precisely balances the nonlinearity in \eqref{ODI_2}.
In terms of $h$, the differential inequalities for $g$ in \eqref{eq:g1}--\eqref{eq:g2} become
\bq\label{eq:h1}
 h'(y) \le \frac{(1-\alpha)(c_1 h(y)- y)}{\lambda h^{\frac{1+\alpha}{1-\alpha}}(y)}\quad \text{if }  h(y) \le \frac{y}{c_1},
\eq
and
\bq\label{eq:h2}
 h'(y) \ge \frac{(1-\alpha)(c_2 h(y)- y)}{\Lambda h^{\frac{1+\alpha}{1-\alpha}}(y)}\quad \text{if }  h(y) \ge \frac{y}{c_2}.
\eq
Our goal is to prove that there exists $y_i>0$ such that
\bq\label{eq:h:goal}
\frac{y}{c_1} \le h(y) \le \frac{2y}{c_2} \quad \text{for all }  y\in[0, y_i].
\eq
This relation captures the local linear behavior of $h$ near the origin, and consequently allows us to determine the decay rates of $X$ and $Y$.

\medskip
\noindent {\it Step 1: Establishing the lower bound for $h$.} We first show that there exists $0<y_1\le Y_0$ such that
\bq\label{eq:step1}
h(y) \ge \frac{y}{c_1}\quad \text{for all }  y\in [0,y_1].
\eq

\smallskip
\noindent  {\it Step 1(a): Persistence once the barrier is crossed.} Suppose that for some $y_1\in(0,Y_0]$ we have $h(y_1)\ge \frac{y_1}{c_1}$. We claim that then \eqref{eq:step1} holds for all smaller $y$. Indeed, if not, then there exists $\tilde y \in (0,y_1)$ such that $h(\tilde y) < \frac{\tilde y}{c_1}$. Let
\[
y_*:=\sup\lt\{[0,y_1]: h(y) < \frac{y}{c_1}\rt\}.
\]
 We then observe that 
 \[
 h(y_*) = \frac{y_*}{c_1} \quad \mbox{and} \quad h'(y_*) \ge \frac{1}{c_1}.
 \]
On the other hand, inequality \eqref{eq:h1} gives $h'(y_*)\le0$, which is a contradiction.
Thus, the lower barrier $y/c_1$ is preserved once it is reached.

\smallskip
\noindent {\it Step 1(b): Existence of the crossing point.} It remains to show that such a point $y_1$ exists. Suppose to the contrary that $h(y)<\frac{y}{c_1}$ for all $y\in(0,Y_0]$. Then by \eqref{eq:h1} we have $h'(y)\le0$ on $(0,Y_0]$, so $h$ is nonincreasing. This is incompatible with $h(0)=0$ and $h>0$ on $(0,Y_0]$.
To make this precise, set $y_1:=c_1 h(y_0)<Y_0$. Then $h(y_1)\ge h(Y_0)=\frac{y_1}{c_1}$, and thus this leads to a contradiction. Hence, a crossing point $y_1$ does exist, and \eqref{eq:step1} follows.

Step 1 thus establishes the desired lower control of $h$ near zero.
This is directly analogous to the barrier argument used in Lemma \ref{lem:ODI_1}, and it ensures that $h$ grows at least linearly with slope $1/c_1$.

In the next step, we turn to the complementary upper bound, which is subtler.
Unlike the lower estimate, the upper bound requires exploiting the positivity of $\alpha$ in order to prevent $h$ from exceeding a linear growth rate.

\medskip
\noindent {\it Step 2: Establishing the upper bound for $h$.} We now show that $h$ cannot stay strictly above the barrier $\frac{2y}{c_2}$ near the origin. More precisely, there exists $0<y_2\le Y_0$ such that
\bq\label{eq:step3}
h(y) \le \frac{2y}{c_2}\quad \text{for all }  y\in [0,y_2].
\eq
To prepare for the proof, we first fix a cutoff scale $y_0\in(0,Y_0]$ small enough to guarantee
\bq\label{dom:cutoff}
\sup_{y \in [0,y_0]}h^{\frac{2\alpha}{1-\alpha}}(y) = \sup_{y \in [0,y_0]}g^{2\alpha}(y) < \frac{c_2^2 (1-\alpha)}{4\Lambda}.
\eq
The existence of such a $y_0$ follows from the continuity of $g$ and the fact that $g(0)=0$.
Within this interval $[0,y_0]$, we now establish the upper bound in two substeps.
 
\smallskip
\noindent {\it Step 2(a): Persistence once the barrier is crossed.} If $h(y_2) \le \frac{2y_2}{c_2}$ for some $0<y_2\le y_0$, then \eqref{eq:step3} holds automatically on $[0,y_2]$. Indeed, suppose instead that $h(\tilde y) > \frac{2\tilde y}{c_2}$ for some $\tilde y \in (0, y_2)$, and define 
\[
y^*:=\sup\lt\{[0,y_1]: h(y) > \frac{2y}{c_2}\rt\}.
\]
Then, we find 
\[
h(y^*) = \frac{2y^*}{c_2} \quad \mbox{and} \quad h'(y^*) \le \frac{2}{c_2}. 
\]
However, applying \eqref{eq:h2} at $y^*$ yields
\[
h'(y^*) \ge  \frac{(1-\alpha)(c_2 h(y^*)- y^*)}{\Lambda h^{\frac{1+\alpha}{1-\alpha}}(y^*)}=  \frac{(1-\alpha)c_2}{2\Lambda h^{\frac{2\alpha}{1-\alpha}}(y^*)} > \frac{2}{c_2},
\]
where the last inequality follows from \eqref{dom:cutoff}. This contradiction shows that once $h$ touches the barrier, it cannot cross above it again.

\smallskip
\noindent {\it Step 2(b): Existence of a crossing point.}  It remains to prove that $h$ does meet the barrier at some finite $y_2$. Suppose to the contrary that $h(y) > \frac{2y}{c_2}$ for all $y \in(0,y_0]$. By rearranging the inequality \eqref{eq:h2}, we deduce that
\[
h^{\frac{2\alpha}{1-\alpha}}(y)h'(y) \ge \frac{(1-\alpha)\lt(c_2 - \frac{1}{\frac{h(y)}{y}}\rt)}{\Lambda}\ge \frac{(1-\alpha)c_2}{2\Lambda},
\]
in other words,
\[
(h^{\frac{1+\alpha}{1-\alpha}})'(y) \ge \frac{(1+\alpha)c_2}{2\Lambda} =: \e >0.
\]
Integrating from $0$ with $h(0)=0$ gives
\[
h^{\frac{1+\alpha}{1-\alpha}}(y) \ge \e y, \quad y \in [0,y_0].
\]
Thus, we get
\[
g(y) = h^{\frac{1}{1-\alpha}}(y)\ge (\e y)^{\frac{1}{1+\alpha}}, \quad y \in [0,y_0].
\]
We put this into \eqref{clock:ub} with sufficiently large $t_i$ satisfying $Y(t_i) \le y_0$. Then, for any $t \ge t_i$, we have
\[
t - t_i \le \frac{1}{\lambda}\int_{Y(t)}^{Y(t_i)} \frac{dy}{g(y)} \le  \frac{1}{\lambda}\int_{0}^{y_0} \frac{dy}{(\e y)^{\frac{1}{1+\alpha}}} = \frac{(1+\alpha)}{\alpha \e^{\frac{1}{1+\alpha}}}y_0^{\frac{\alpha}{1+\alpha}} <+\infty,
\]
which contradicts the fact that $t\to\infty$ while $Y(t)\to0$. Hence, $h$ must cross the barrier at some finite $y_2\le y_0$.

\medskip
\noindent {\it Step 3: Deriving the optimal decay rates for $X(t)$ and $Y(t)$.} Combining the results of {\it Steps 1 \& 2}, we select
\[
y_i:= \min\{y_1,y_2\}>0, 
\]
so that the two-sided estimate \eqref{eq:h:goal} holds for all $y \in [0,y_i]$. We now exploit this local behavior of $h$ near the origin to deduce the asymptotic time decay of $Y(t)$ and $X(t)$. 

Take a sufficiently large time $t_i \ge 0$ such that $Y(t_i) = y_i$. Applying the inequality \eqref{clock:ub} together with the bound \eqref{eq:h:goal} and the relation $g(y) = h^{\frac{1}{1-\alpha}}(y)$, we obtain, for all $t \ge t_i$,
\[
t - t_i \le \frac{1}{\lambda}\int_{Y(t)}^{y_i} \frac{dy}{h^{\frac{1}{1-\alpha}}(y)} \le \frac{c_1^{\frac{1}{1-\alpha}}}{\lambda}\int_{Y(t)}^{y_i} \frac{dy}{y^{\frac{1}{1-\alpha}}}. 
\]
Evaluating the integral on the right-hand side yields
\[
t - t_{i} \le \frac{c_1^{\frac{1}{1-\alpha}}}{\lambda}\lt(\frac{1-\alpha}{\alpha}\rt)\lt(Y(t)^{-\frac{\alpha}{1-\alpha}}- {y_i}^{-\frac{\alpha}{1-\alpha}}\rt),
\]
and thus
\[
Y(t) \le \lt(\lt(\frac{\lambda \alpha}{c_1^{\frac{1}{1-\alpha}}(1-\alpha)}\rt)(t-t_i) + y_i^{-\frac{\alpha}{1-\alpha}}\rt)^{-\frac{1-\alpha}{\alpha}}.
\]
A similar argument using \eqref{clock:lb} and the bound \eqref{eq:h:goal} provides the lower bound estimate
\[
Y(t) \ge \lt(\lt(\frac{2^{\frac{1}{1-\alpha}}\Lambda \alpha}{c_2^{\frac{1}{1-\alpha}}(1-\alpha)}\rt)(t-t_i) + y_i^{-\frac{\alpha}{1-\alpha}}\rt)^{-\frac{1-\alpha}{\alpha}}.
\]
Together, these bounds establish the precise algebraic decay rate of $Y(t)$.
Finally, recalling the relation $X(t)=g(Y(t))$ and the bounds in \eqref{eq:h:goal}, we obtain the corresponding decay estimate for $X(t)$, thereby completing the proof.
\end{proof}

Based on Lemmas \ref{lem:ODI_1} and \ref{lem:ODI_2}, we provide the proof of Proposition \ref{thm_1} (ii).

\begin{proof}[Proof of Proposition \ref{thm_1} (ii)]
Let $X(t),Y(t)$ be as defined above. By Proposition \ref{prop:1dlc:0}, we know that there exists $t_0>0$ such that $D_\eta(t)\le R$ for all $t \ge t_0$. This allows us to apply Lemma \ref{lem:ODI_2}, which yields the precise algebraic decay of $D_\eta(t)$ and $D_\omega(t)$ in the case $\alpha \in (0,1)$. Finally, recalling \eqref{D_v:estimate} together with Corollary \ref{cor:Psi}, we obtain the corresponding decay estimate for $D_v(t)$.
This establishes Proposition \ref{thm_1} (ii).
\end{proof}

%%%%%%%%%%%%%%%%%%%%%%%%%%%%%%%%%%%%
%
%
%
%
%
%
%
%
%
%
%%%%%%%%%%%%%%%%%%%%%%%%%%%%%%%%%%%%
 \subsection{Proof of Theorem \ref{thm_1:new}}
We now translate the decay estimates obtained in the Lagrangian framework back to the original Eulerian coordinates in order to complete the proof of Theorem \ref{thm_1:new}. Recall that the asymptotic profile is represented by the moving Dirac mass $\rho_\infty(t) = \delta_{\eta_c(t)}$, where the center of mass trajectory $\eta_c(t)$ lies in the convex hull of the current support of $\rho(t)$. More precisely,
\[
\eta_\infty(t,x) = \eta_c(t) = \int_\R \eta(t,x)\rho_0(x)\,dx \in \text{conv}\lt\{\eta(t,x): x \in {\rm supp}(\rho_0)\rt\}.
\]
Consequently, the diameter of the relative displacement satisfies
\[
D_{\tilde{\eta}}(t) \le D_{\eta}(t).
\]
Combining this relation with Lemma \ref{lem:Wst} yields the upper bound
\[
 {\rm d}_\infty (\rho(t), \rho_\infty(t)) \le D_{\tilde{\eta}}(t) \le D_\eta(t).
\]
On the other hand, since $\rho_\infty(t)$ is concentrated at the center of mass, any coupling between $\rho(t)$ and $\rho_\infty(t)$ must transport at least half of the mass across a distance comparable to the support diameter of $\rho(t)$. This provides the complementary lower bound
\[
 {\rm d}_\infty(\rho(t),\rho_\infty(t)) \ge \frac{1}{2}D_\eta(t).
\]
Together, these two inequalities show that the asymptotic behavior of the density in Wasserstein distance is quantitatively equivalent to the decay of the flow diameter $D_\eta(t)$.

For the velocity field, we exploit the fact that the limiting velocity $u_\infty$ belongs to the convex hull of the instantaneous velocities along characteristics:
\begin{align*}
u_\infty &= \int_\R u_0(y)\rho_0(y)\,dy = \int_\R u(t,y)\rho(t,y)\,dy \cr
&= \int_\R u(t,\eta(t,y)) \rho_0(y)\,dy \in \text{conv}\lt\{u(t,\eta(t,y)):y \in {\rm supp}(\rho_0)\rt\}.
\end{align*}
Therefore, for any $x \in {\rm supp}(\rho_0)$ we obtain
\[
|u(t,\eta(t,x)) - u_\infty| \le D_v(t),
\]
and hence the uniform estimate
\[
\|u(t,\cdot) - u_\infty\|_{L^\infty({\rm supp}(\rho(t)))} \le D_v(t).
\]

Putting these Eulerian interpretations together, Proposition \ref{thm_1} ensures the decay of both $D_\eta(t)$ and $D_v(t)$ with the corresponding exponential or algebraic rates depending on the communication weights. As a consequence, the Wasserstein distance between $\rho(t)$ and the moving Dirac mass $\rho_\infty(t)$ converges to zero, and the velocity field aligns uniformly to $u_\infty$. This proves the desired stability estimates in Theorem \ref{thm_1:new}.

%%%%%%%%%%%%%%%%%%%%%%%%%%%%%%%%%%%%
%
%
%
%
%
%
%
%
%
%
%%%%%%%%%%%%%%%%%%%%%%%%%%%%%%%%%%%%
\section{One-dimensional dynamics with Coulomb--quadratic potential}\label{sec:m2}

In this section, we investigate the large-time behavior in the case of a repulsive Coulomb potential with quadratic confinement \eqref{def_W_cc}. This case is not covered by the $(\lambda,\Lambda)$-convex framework and requires a more delicate analysis. To this end, we introduce perturbation variables around the reference profile $(\eta_\infty,u_\infty)$, which allow us to quantify deviations of trajectories and velocities from the asymptotic state.

We recall the perturbed system:
\[
\begin{aligned}
\pa_t \tilde \eta(x)  & = \tilde\omega(x)  - \int_\R (\Psi(\eta(x)-\eta(y)) - \Psi(\eta_\infty(x)-\eta_\infty(y)))\rho_0(y)\,dy - u_\infty, \cr
\pa_t \tilde \omega(x)   & = - \tilde \eta(x),
\end{aligned}
\]
where 
\[
\tilde \eta = \eta - \eta_\infty, \quad \tilde \omega(x) = v(x) + \int_\R (\Psi(\eta(x)-\eta(y)) - \Psi(\eta_\infty(x)-\eta_\infty(y)))\rho_0(y)\,dy.
\]
It is worth noticing that $\tilde \eta$ does not have a sign preserving property \eqref{sgn}. This lack of monotonicity makes the analysis substantially more involved.

To quantify the contraction of trajectories and velocity discrepancies in this perturbative setting, we introduce the diameters for $\tilde\eta$ and  $\tilde \omega$. Analogous to the previous definition, we set
\[
D_{\tilde\eta}(t) := \sup_{x,y \in {\rm supp}(\rho_0)}|\tilde\eta(t,x) - \tilde\eta(t,y)|, \quad D_{\tilde\omega}(t) := \sup_{x,y \in {\rm supp}(\rho_0)}|\tilde\omega(t,x) - \tilde\omega(t,y)|.
\]
These functionals measure the maximal deviations from the asymptotic profile $\tilde \eta_\infty$ in position and in the modified velocity, respectively, and their decay properties will play an important role in establishing convergence toward flocking states.

The goal of this section is to establish the quantitative decay of these diameters and related fluctuation energies under suitable assumptions on the communication weight $\psi$. Specifically, we prove that compactly supported solutions converge toward flocking states, with exponential decay in the case of bounded weights and algebraic decay in the case of weakly singular weights. This is summarized in the following proposition.

\begin{proposition}\label{thm_2}Let $d=1$ and let $(\eta,v)$ be a global solution to \eqref{main_sys2} with compactly supported initial density $\rho_0$ and sufficient regularity. 
Suppose that the assumptions of Proposition \ref{prop:1dlc:0} are satisfied, which ensures $D_\eta(t)\le \overline{D}$ for all $t\ge0$. Assume further that the communication weight satisfies
\[
\psi(x) \ge \psi_{\rm m}:= \psi(\bar D)>0 \quad \mbox{for all } |x| \le \bar D.
\]
Then the following decay estimates hold:
\begin{itemize}
\item[(i)] If $\psi$ is bounded from above, then the relative diameters and the velocity diameter decay exponentially:
\[
D_{\tilde{\eta}}(t) + D_{\tilde{\omega}}(t) + D_v(t) \to 0 \quad \mbox{as } t \to \infty.
\]
\item[(ii)]  If $\psi$ satisfies $\psi(x) \le B|x|^{-\alpha}$ for some $\alpha \in (0,1)$ and all $|x| \le \bar D$, then the decay is algebraic:
\[
\iint_{\R \times \R} \lt((\tilde \eta(t,x) - \tilde \eta(t,y))^2 + (\tilde\omega(t,x) -\tilde\omega(t,y))^2 \rt)\rho_0(x)\rho_0(y)\,dxdy = \mathcal{O}\lt(t^{-(\frac{3}{2\alpha}-1) }\rt)
\]
and
\[
\iint_{\R \times \R} (v(t,x) - v(t,y))^2 \rho_0(x)\rho_0(y)\,dxdy = \mathcal{O} \lt(t^{-(1-\alpha)(\frac{3}{2\alpha}-1) }\rt)
\]
as $t \to \infty$.
\end{itemize}
\end{proposition}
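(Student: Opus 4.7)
My plan is to carry out a hypocoercivity argument on the perturbed Lagrangian system
\[
\pa_t\Delta\tilde\eta = \Delta\tilde\omega - (N(x)-N(y)), \qquad \pa_t\Delta\tilde\omega = -\Delta\tilde\eta,
\]
where $\Delta h(x,y):=h(x)-h(y)$, $N(x):=\int_\R F(x,z)\rho_0(z)\,dz$, and $F(x,z):=\Psi(\eta(x)-\eta(z))-\Psi(\eta_\infty(x)-\eta_\infty(z))$. The working Lyapunov functional is
\[
\calL_\e(t) := \tfrac12\iint\lt[(\Delta\tilde\eta)^2 + (\Delta\tilde\omega)^2\rt]\rho_0\rho_0 \,-\, \e\iint\Delta\tilde\eta\,\Delta\tilde\omega\,\rho_0\rho_0,
\]
which for small $\e>0$ is equivalent to $\mf(t) := \tfrac12\iint[(\Delta\tilde\eta)^2+(\Delta\tilde\omega)^2]\rho_0\rho_0$. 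The cross term is included so that the alignment dissipation, which sits naturally on $\Delta\tilde\eta$ via $N$, also drains $\Delta\tilde\omega$.

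The structural fact driving the argument is that $F(x,z)$ is antisymmetric under $x\leftrightarrow z$ and has the same sign as $\tilde\eta(x)-\tilde\eta(z)$ (by monotonicity of the Lagrangian flow and positivity of $\psi$). Combined with the lower bound $\psi\ge\psi_{\rm m}$ on $|r|\le\overline{D}$, valid in view of Proposition \ref{prop:1dlc:0}(ii), this gives the coercivity identity
\[
\iint\Delta\tilde\eta\,(N(x)-N(y))\rho_0\rho_0 = \iint F(x,z)(\tilde\eta(x)-\tilde\eta(z))\rho_0\rho_0 \ge \psi_{\rm m}\iint(\Delta\tilde\eta)^2\rho_0\rho_0.
\]
Differentiating $\calL_\e$ along the flow, substituting the equations and applying Cauchy--Schwarz and Young to the residual coupling $\iint\Delta\tilde\omega(N(x)-N(y))\rho_0\rho_0$ yields the working inequality
\[
\dot\calL_\e \,\le\, -(\psi_{\rm m}-\e)\iint(\Delta\tilde\eta)^2\rho_0\rho_0 \,-\, \tfrac{\e}{2}\iint(\Delta\tilde\omega)^2\rho_0\rho_0 \,+\, \tfrac{\e}{2}\iint F(x,z)^2\rho_0\rho_0.
\]

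For part (i), the uniform bound $|F(x,z)|\le\psi_{\rm M}|\tilde\eta(x)-\tilde\eta(z)|$ gives $\iint F^2\rho_0\rho_0 \ls \iint(\Delta\tilde\eta)^2\rho_0\rho_0$, and choosing $\e$ small enough closes the estimate into $\dot\calL_\e\le -c\calL_\e$, delivering exponential $L^2$ decay of $\mf$. To upgrade to $D_{\tilde\eta}+D_{\tilde\omega}+D_v\to 0$, I combine this $L^2$ decay with the uniform Lipschitz regularity in $x$ of the Lagrangian variables (from Proposition \ref{prop:1dlc:0}(ii) and the smoothness of the classical solution) via the one-dimensional Gagliardo--Nirenberg interpolation $\|f\|_\infty^2\ls \|f\|_{L^2(\rho_0)}\|\pa_x f\|_{L^\infty}$. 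The estimate for $D_v$ then follows from the identity $v(x)-v(y)=\Delta\tilde\omega-(N(x)-N(y))$ together with the linear Lipschitz structure of $F$.

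For part (ii), the singular structure of $\psi$ only provides the sublinear bound $|F(x,z)|\le\Psi(|\tilde\eta(x)-\tilde\eta(z)|)\le C|\Delta\tilde\eta|^{1-\alpha}$ (from the concavity of $\Psi$ and Corollary \ref{cor:Psi}(ii)), so Jensen's inequality on the probability measure $\rho_0\otimes\rho_0$ gives $\iint F^2\rho_0\rho_0 \le C\,\mf^{1-\alpha}$. The hypocoercive bound then becomes $\dot\calL_\e \ls -\calL_\e + \calL_\e^{1-\alpha}$, which \emph{by itself} does not force $\calL_\e\to 0$. To extract the sharp polynomial rate, I couple this inequality with the physical energy--dissipation identity $\tfrac{d}{dt}[E-E(\rho_\infty)]=-\iint\psi(\eta(x)-\eta(y))(v(x)-v(y))^2\rho_0\rho_0$; in the singular regime the dissipation is bounded below by the fractional expression $\iint|x-y|^{-\alpha}|v(x)-v(y)|^2\rho_0\rho_0$, and interpolating this against the quadratic part of $E-E(\rho_\infty)$ produces a genuinely superlinear decay inequality $\dot\calL_\e \ls -\calL_\e^{1+2\alpha/(3-2\alpha)}$, whence direct ODE integration delivers the announced rate $\mf(t)=\mathcal{O}(t^{-(3/(2\alpha)-1)})$. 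For the velocity estimate, I insert the identity $v(x)-v(y)=\Delta\tilde\omega-(N(x)-N(y))$ and use $\iint|N(x)-N(y)|^2\rho_0\rho_0\ls \mf^{1-\alpha}$ (again via Jensen); the second term dominates for small $\mf$ and produces the strictly slower rate $t^{-(1-\alpha)(3/(2\alpha)-1)}$.

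The main obstacle is part (ii): the sublinear bound on $F$ generates a source term $\calL_\e^{1-\alpha}$ against which the hypocoercive estimate is not self-closing, and naively the Lyapunov would equilibrate at a nonzero value. Overcoming this requires the coupled use of the physical energy--dissipation inequality, and the precise balance between the hypocoercive transfer of dissipation and the fractional velocity dissipation is exactly what pins down the sharp exponent $3/(2\alpha)-1$.
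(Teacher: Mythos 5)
Your plan for the $L^2$ part of (i) is sound: the cross-term Lyapunov $\calL_\e$, the sign/coercivity identity, and the bound $\iint F^2\rho_0\rho_0\le\psi_{\rm M}^2\iint(\Delta\tilde\eta)^2\rho_0\rho_0$ do close a linear ODI and give exponential $L^2$ decay. The paper organizes the hypocoercivity differently (a time-dependent combination $(1+\gamma)X+Y+\gamma Z$ with $Z$ the unweighted velocity dissipation, drawing on the two Lagrangian dissipation identities \eqref{est_uw_l2_2}--\eqref{est_uw_l2_4}), but your functional works for this step. However, the two remaining steps have genuine gaps.

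\emph{The $L^\infty$ upgrade in (i).} The interpolation $\|f\|_\infty^2\lesssim\|f\|_{L^2(\rho_0)}\|\pa_x f\|_\infty$ is not available here. It requires $\rho_0$ to be bounded away from zero on an interval (otherwise a function supported where $\rho_0$ is tiny can be large in $L^\infty$ with small $L^2(\rho_0)$ norm), and, more importantly, you would need a \emph{time-uniform} Lipschitz bound in the Lagrangian label $x$ for $\tilde\eta(t,\cdot),\tilde\omega(t,\cdot),v(t,\cdot)$. Nothing in the hypotheses provides such a bound; Proposition \ref{prop:1dlc:0}(ii) only controls the range $D_\eta(t)$, not $\pa_x\eta$. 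The paper avoids this entirely by running a \emph{pointwise-in-$x$} hypocoercivity ODE for $\tfrac12((\eta(x)-\eta_\infty(x))^2+(v(x)-v_\infty)^2)+\zeta(\eta(x)-\eta_\infty(x))(v(x)-v_\infty)$, in which the only coupling to other labels enters as a source $g(t)=\big(\iint|v(y)-v(z)|^2\rho_0\rho_0\big)^{1/2}$ that decays exponentially by the $L^2$ estimate. This works uniformly over $x\in\operatorname{supp}\rho_0$ with no regularity in $x$.

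\emph{Part (ii).} You correctly identify that Jensen's inequality only gives $\iint F^2\rho_0\rho_0\lesssim X^{1-\alpha}$ and that the resulting ODI $\dot\calL_\e\lesssim-\calL_\e+\calL_\e^{1-\alpha}$ does not force decay. The proposed repair does not work as stated. The claim that ``the dissipation is bounded below by the fractional expression $\iint|x-y|^{-\alpha}|v(x)-v(y)|^2\rho_0\rho_0$'' conflates the lower bound $\psi\ge\psi_{\rm m}$ (which gives the linear dissipation $\psi_{\rm m}Z$ already used) with the upper bound $\psi\le B|x|^{-\alpha}$; the latter bounds the dissipation from \emph{above}, not below, so no superlinear inequality $\dot\calL_\e\lesssim-\calL_\e^{1+2\alpha/(3-2\alpha)}$ emerges from the mechanism you describe. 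The paper's actual fix has two ingredients that are absent from your proposal. First, a sharper replacement for the Jensen bound (Lemma \ref{lem:wsc}): for every threshold $\delta>0$,
\[
\int_\R|\Psi(\eta(x)-\eta(z))-\Psi(\eta_\infty(x)-\eta_\infty(z))|^2\rho_0(z)\,dz\le\Big(\frac{\Psi(\delta)}{\delta}\Big)^2\int_\R|\tilde\eta(x)-\tilde\eta(z)|^2\rho_0(z)\,dz+\Psi(\delta)^2\delta,
\]
obtained by splitting into the ``good'' set where one of the arguments has modulus $\ge\delta$ (then Lemma \ref{lem:Psi:avg} gives a Lipschitz constant $\Psi(\delta)/\delta$) and the ``bad'' set where both are $<\delta$ (which has $\rho_0$-mass at most $\delta$ because $\pa_x\eta_\infty=2\rho_0$). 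Second, a \emph{time-decaying} threshold $\delta(t)\sim(1+t)^{-1/(2\alpha)}$ together with the matched hypocoercivity weight $\gamma(t)\sim\delta(t)^{2\alpha}$, yielding a Gr\"onwall inequality of the form $\dot\calL(t)\le-\kappa\,\delta(t)^{2\alpha}\calL(t)+\delta(t)^3$ whose integration pins down the exponent $\tfrac{3}{2\alpha}-1$ exactly. Without the threshold splitting and the time-dependent $\delta$, I do not see how to reach the stated rate.
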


\begin{remark}
When the communication weight is constant, the long-time asymptotics can be obtained without uniform control of the diameter, since the system admits direct coercivity estimates. In fact, if $\psi(\cdot)=\bar{\psi}>0$, then $\Psi(s)=\bar{\psi}s$ and the dynamics reduce to
$$\begin{aligned}
&\pa_t (\tilde \eta(x) - \tilde \eta(y) )= (\tilde\omega(x) - \tilde\omega(y)) - \bar{\psi}(\tilde \eta(x) - \tilde \eta(y)),\cr
&\pa_t (\tilde \omega(x) - \tilde \omega(y)) = - (\tilde \eta(x) - \tilde \eta(y)).
\end{aligned}$$
In this case, hypocoercivity arguments (similar to those in the proof of Proposition \ref{thm_1} for the bounded weight case in Section \ref{sec:1lb}) yield the exponential decay
\[
D_{\tilde \eta}(t)+D_{\tilde \omega}(t)\to 0 \quad \text{as }t\to\infty.
\]
Furthermore, the pointwise estimate
\[
|v(x)-v(y)| \le |\tilde \omega(x) - \tilde \omega (y)| + \bar{\psi} |\tilde \eta (x) - \tilde \eta (y)|
\]
implies that $D_v(t)$ decays exponentially as well. This recovers the sharp decay obtained in \cite{CCZ16} by explicit formulas for characteristics.
\end{remark}

\begin{remark}\label{rmk:relax}
In fact, the assumptions on the local behavior of the communication weight $\psi$ as in Theorem \ref{thm_2:new} hold only \emph{a priori} on $|x|\le R$ for some $R>0$. However, they imply that they hold for all $|x| \le \overline{D}$ with possibly different constants due to the fact that $\psi$ is a positive monotone decreasing function of $|x|$. 
\end{remark}

We next establish two fundamental dissipation estimates for the perturbation variables. These estimates, formulated in terms of $\tilde\eta$, $\tilde\omega$, and $v$, quantify the decay of relative fluctuations and will serve as the starting point for deriving both exponential and algebraic convergence rates in the subsequent analysis.

\begin{lemma}\label{lem:aux-energy}Let $d=1$ and let $(\eta,v)$ be a global solution to \eqref{main_sys2} with compactly supported initial density $\rho_0$ and sufficient regularity.  Suppose that the assumptions of Proposition \ref{thm_2} are satisfied. Then the following dissipation estimates hold:
\begin{align}\label{est_uw_l2_2}
\begin{aligned}
&\frac12\frac{d}{dt} \iint_{\R \times \R} \lt((\tilde \eta(x) - \tilde \eta(y))^2 + (\tilde\omega(x) - \tilde\omega(y))^2 \rt)\rho_0(x)\rho_0(y)\,dxdy\cr
&\quad \leq - \psi_{\rm m}\iint_{\R \times \R}  (\tilde \eta(x) - \tilde \eta(y))^2 \rho_0(x)\rho_0(y)\,dxdy
\end{aligned}
\end{align}
and
\begin{align}\label{est_uw_l2_4}
\begin{aligned}
&\frac12\frac{d}{dt}\iint_{\R \times \R} \lt((\tilde \eta(x) - \tilde \eta(y))^2 + (v(x) - v(y))^2 \rt)\rho_0(x)\rho_0(y)\,dxdy\cr
&\quad\leq -  \psi_{\rm m}\iint_{\R \times \R}  (v(x) - v(y))^2 \rho_0(x)\rho_0(y)\,dxdy.
\end{aligned}
\end{align}
\end{lemma}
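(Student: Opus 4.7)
The plan is to exploit the Hamiltonian-like cancellation built into the perturbation system for the Coulomb--quadratic potential. Writing $\tilde\eta_{xy}:=\tilde\eta(x)-\tilde\eta(y)$ and $\tilde\omega_{xy}:=\tilde\omega(x)-\tilde\omega(y)$, the equations $\pa_t\tilde\omega=-\tilde\eta$ and $\pa_t\tilde\eta=\tilde\omega-I$ force the cross term to cancel in $\tfrac12\pa_t(\tilde\eta_{xy}^2+\tilde\omega_{xy}^2)$, leaving only the alignment contribution. A parallel cancellation applies to $(\tilde\eta,v)$ once one combines $\pa_t\tilde\eta=v-u_\infty$ with the velocity equation $\pa_t v=-\tilde\eta-\int\psi(\eta(x)-\eta(z))(v(x)-v(z))\rho_0(z)\,dz$, the latter following from \eqref{main_sys2} via the identity $(\pa_x W\star\rho)(\eta(x))=\tilde\eta(x)$ established in Section \ref{sec:lagr}.

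For \eqref{est_uw_l2_2}, I introduce the kernel
\[
G(x,z) := \Psi(\eta(x)-\eta(z)) - \Psi(\eta_\infty(x)-\eta_\infty(z)),
\]
which is antisymmetric in $(x,z)$ since $\Psi$ is odd. After the cancellation, multiplying by $\rho_0(x)\rho_0(y)$ and integrating produces a triple integral
\[
-\iiint \tilde\eta_{xy}\bigl[G(x,z)-G(y,z)\bigr]\rho_0(x)\rho_0(y)\rho_0(z)\,dxdydz.
\]
Symmetrizing in $(x,y)$ and using the zero-mean property $\int\tilde\eta\,\rho_0=0$ together with $\iint G\,\rho_0\rho_0=0$ (both from antisymmetry) collapses this to the double integral
\[
-\iint(\tilde\eta(x)-\tilde\eta(z))\,G(x,z)\,\rho_0(x)\rho_0(z)\,dxdz.
\]
Setting $a:=\eta(x)-\eta(z)$ and $b:=\eta_\infty(x)-\eta_\infty(z)$, so that $a-b=\tilde\eta(x)-\tilde\eta(z)$, the mean value theorem gives $\Psi(a)-\Psi(b)=\psi(\xi)(a-b)$ for some $\xi$ between $a$ and $b$. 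Proposition \ref{prop:1dlc:0} provides $|a|\le D_\eta(t)\le\overline{D}$, while $|b|\le 2\le\overline{D}$ since $\eta_\infty(x)-\eta_\infty(z)=\eta_r(x)-\eta_r(z)\in[-2,2]$; hence $|\xi|\le\overline{D}$ and the monotonicity of $\psi$ forces $\psi(\xi)\ge\psi(\overline{D})=\psi_{\rm m}$. This delivers the pointwise inequality
\[
(\tilde\eta(x)-\tilde\eta(z))\,G(x,z) = \psi(\xi)(a-b)^2 \ge \psi_{\rm m}(\tilde\eta(x)-\tilde\eta(z))^2,
\]
which closes the first estimate.

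The proof of \eqref{est_uw_l2_4} follows the same template with the antisymmetric kernel $H(x,z):=\psi(\eta(x)-\eta(z))(v(x)-v(z))$ in place of $G$; antisymmetry of $H$ uses that $\psi$ is even in its argument. The analogous two-variable reduction produces
\[
\iint\psi(\eta(x)-\eta(z))(v(x)-v(z))^2\rho_0(x)\rho_0(z)\,dxdz \ge \psi_{\rm m}\iint(v(x)-v(z))^2\rho_0\rho_0,
\]
where $\psi(\eta(x)-\eta(z))\ge\psi_{\rm m}$ again follows from $|\eta(x)-\eta(z)|\le\overline{D}$.

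The principal technical step is the symmetrization reducing the triple integral to the coercive double integral; without both the antisymmetry of $G$ (resp.\ $H$) and the zero-mean identity $\int\tilde\eta\,\rho_0=0$, the residual terms would not vanish and one could not express the dissipation purely in terms of the left-hand quadratic functionals. Once this reduction is in place, the a priori diameter control $D_\eta\le\overline{D}$ furnished by Proposition \ref{prop:1dlc:0} makes the coercivity step elementary.
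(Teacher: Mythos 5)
Your proposal is correct and follows essentially the same route as the paper: exploit the Hamiltonian cancellation of cross terms, symmetrize the resulting triple integral down to a coercive double integral via the antisymmetric kernels $G$ and $H$, apply the mean value theorem to $\Psi$, and then bound $\psi(\xi)\ge\psi(\overline D)=\psi_{\rm m}$ using the uniform diameter bound from Proposition~\ref{prop:1dlc:0} together with $|\eta_\infty(x)-\eta_\infty(z)|\le 2\le\overline D$. (One trivial nit: $\int\tilde\eta\,\rho_0=0$ comes from the centering of $\eta_\infty$ at $\eta_c(t)$, not from antisymmetry of $G$; in fact the antisymmetry of $G$ alone makes that term drop, so the misattribution has no effect on the argument.)
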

\begin{proof}From the definition of $\tilde\eta$ and $\tilde\omega$ we compute
\begin{align}\label{eq:rasys}
\begin{aligned}
\pa_t (\tilde \eta(x) - \tilde \eta(y) )& = (\tilde\omega(x) - \tilde\omega(y))  - \int_\R (\Psi(\eta(x)-\eta(z)) - \Psi(\eta_\infty(x)-\eta_\infty(z)))\rho_0(z)\,dz \cr
&\quad + \int_\R (\Psi(\eta(y)-\eta(z)) - \Psi(\eta_\infty(y)-\eta_\infty(z)))\rho_0(z)\,dz,\cr
\pa_t (\tilde \omega(x) - \tilde \omega(y)) & = - (\tilde \eta(x) - \tilde \eta(y)),
\end{aligned}
\end{align}
where we used $\pa_t \eta_\infty(x) = u_\infty$ for all $x \in {\rm supp}(\rho_0)$ by \eqref{asymp}. 

Testing \eqref{eq:rasys} against $(\tilde \eta(x)-\tilde \eta(y), \tilde \omega(x)-\tilde \omega(y))$ and integrating with respect to $\rho_0(x)\rho_0(y)$, we obtain
 \begin{align}\label{eq:free:1}
\begin{aligned}
&\frac12\frac{d}{dt}  \iint_{\R \times \R} \lt((\tilde \eta(x) - \tilde \eta(y))^2 + (\tilde\omega(x) - \tilde\omega(y))^2 \rt)\rho_0(x)\rho_0(y)\,dxdy\cr
&\quad = -\iiint_{\R \times \R \times \R} (\tilde \eta(x) - \tilde \eta(y))(\Psi(\eta(x)-\eta(z)) - \Psi(\eta_\infty(x)-\eta_\infty(z)))\rho_0(z)\rho_0(x)\rho_0(y)\,dxdydz \cr
&\qquad +\iiint_{\R \times \R \times \R} (\tilde \eta(x) - \tilde \eta(y)) (\Psi(\eta(y)-\eta(z)) - \Psi(\eta_\infty(y)-\eta_\infty(z)))\rho_0(z)\rho_0(x)\rho_0(y)\,dxdydz\cr
&\quad =2\iiint_{\R \times \R \times \R} (\tilde \eta(x) - \tilde \eta(y)) (\Psi(\eta(y)-\eta(z)) - \Psi(\eta_\infty(y)-\eta_\infty(z)))\rho_0(z)\rho_0(x)\rho_0(y)\,dxdydz\cr
&\quad = -2\iiint_{\R \times \R \times \R}  ( \tilde  \eta(x) - \tilde \eta(z)) (\Psi(\eta(y)-\eta(z)) - \Psi(\eta_\infty(y)-\eta_\infty(z)))\rho_0(z)\rho_0(x)\rho_0(y)\,dxdydz\cr
&\quad = - \iint_{\R \times \R} (\tilde \eta(y) - \tilde \eta(z)) (\Psi(\eta(y)-\eta(z)) - \Psi(\eta_\infty(y)-\eta_\infty(z)))\rho_0(z)\rho_0(y)\,dydz\cr
&\quad = - \iint_{\R \times \R}  \psi(\bar\zeta(x,y))(\tilde \eta(x) - \tilde \eta(y))^2 \rho_0(x)\rho_0(y)\,dxdy.
\end{aligned}
\end{align}
Here we used the symmetry of the integrals (exchanging $x$ and $y$ in the second term, then relabeling indices) and 
$$\begin{aligned}
\bar\zeta(x,y) = \bar\tau \lt(\eta(x)-\eta(y) \rt) + (1 - \bar\tau) (\eta_\infty(x)-\eta_\infty(y))
\end{aligned}$$
for some $\bar \tau \in [0,1]$. Since $\text{sgn}(\eta(x)-\eta(y)) = \text{sgn}(x-y)= \text{sgn}(\eta_\infty(x)-\eta_\infty(y))$, then they have the same sign, Proposition \ref{prop:1dlc:0} gives
\[
|\bar\zeta(x,y)| \leq \max\lt\{ D_{\eta}(t), 2\lt|\int_y^x \rho_0(z)\,dz\rt| \rt\} \leq \overline{D}.
\]
Hence $\psi(\bar\zeta(x,y))\ge \psi(\overline{D})=\psi_{\rm m}>0$, which proves \eqref{est_uw_l2_2}.  

Similarly, we also find
\begin{align}\label{est_uw_l2_3}
\begin{aligned}
&\frac12\frac{d}{dt}  \iint_{\R \times \R} \lt((\tilde \eta(x) - \tilde \eta(y))^2 + (v(x) - v(y))^2 \rt)\rho_0(x)\rho_0(y)\,dxdy\cr
&\quad = -\iint_{\R \times \R} \psi(\eta(x) - \eta(y))|v(x) - v(y)|^2 \,\rho_0(x) \rho_0(y)\,dxdy,
\end{aligned}
\end{align}
and again $\psi(\eta(x)-\eta(y))\ge\psi_{\rm m}$ by Proposition \ref{prop:1dlc:0}. This yields \eqref{est_uw_l2_4}.
\end{proof}

Now, for brevity of notation, we introduce
\begin{align*}
X &:=  \iint_{\R \times \R} (\tilde \eta(x) - \tilde \eta(y))^2  \rho_0(x)\rho_0(y)\,dxdy,\qquad
Y := \iint_{\R \times \R}  (\tilde\omega(x) - \tilde\omega(y))^2\rho_0(x)\rho_0(y)\,dxdy, \cr
Z &:= \iint_{\R \times \R}  (v(x) - v(y))^2\rho_0(x)\rho_0(y)\,dxdy,\qquad
\tilde{Z} :=   \iint_{\R \times \R}  \psi(\eta(x)-\eta(y))(v(x) - v(y))^2\rho_0(x)\rho_0(y)\,dxdy.
\end{align*}
With these definitions, inequalities \eqref{est_uw_l2_2} and \eqref{est_uw_l2_4} take the form
\bq\label{eq:abb}
\frac{d}{dt}(X+Y) \le -2\psi_{\rm m} X,\quad \frac{d}{dt}(X+Z) \le -2\psi_{\rm m} Z.
\eq
The estimates \eqref{eq:abb} provide dissipation in $X$ and $Z$ but not directly in $Y$. Nevertheless, they are already sufficient to deduce the decay of $X+Z$. Indeed, by \eqref{est_uw_l2_3} we have
\[
\frac{d}{dt}(X+Z) = - 2\tilde{Z},
\]
which shows that $\tilde{Z} \in L^1([0,\infty))$ since $X+Z$ is nonnegative. Hence, $(X+Z)(t)$ is absolutely continuous with finite limit as $t\to\infty$.
Analogously, we can see that $X,Z \in L^1([0,\infty))$ due to \eqref{eq:abb}. This yields
$(X + Z)(t) \to 0$ as $t \to \infty$, or equivalently,
\[
\iint_{\R \times \R} \lt((\tilde \eta(t,x) - \tilde \eta(t,y))^2 + (v(t,x) - v(t,y))^2 \rt)\rho_0(x)\rho_0(y)\,dxdy\to 0
\]
as $t \to \infty$.

This proves the $L^2$ energy decay for general nonnegative, non-decreasing, locally integrable communication weights $\psi$. To obtain quantitative convergence rates, however, we need to extract dissipation also in the $Y$-variables. This is the central step that distinguishes the bounded and the weakly singular weight cases, which will be analyzed in the following subsections.

%%%%%%%%%%%%%%%%%%%%%%%%%%%%%%%%%%%%
%
%
%
%
%
%
%
%
%
%
%%%%%%%%%%%%%%%%%%%%%%%%%%%%%%%%%%%%
\subsection{Bounded communication weights}

We first analyze the case where the communication weight $\psi$ is bounded. 
In this setting, we establish exponential convergence toward flocking states. 
The proof proceeds in two steps: we begin by deriving $L^2$ dissipation estimates for the perturbation variables, and then lift these estimates to uniform-in-$x$ ($L^\infty$) control of the trajectories and velocities. This two-step strategy will allow us to complete the proof of Proposition \ref{thm_2} (i).

%%%%%%%%%%%%%%%%%%%%%%%%%%%%%%%%%%%%
%
%
%
%
%
%
%
%
%
%
%%%%%%%%%%%%%%%%%%%%%%%%%%%%%%%%%%%%
\subsubsection{$L^2$ estimates}

To obtain dissipation in the $\tilde\omega$ variables, we recall \eqref{eq:tom} and estimate
\begin{align}\label{eq:omega:mani}
(\tilde\omega(x) - \tilde\omega(y))^2 
&\leq 3(v(x) - v(y))^2 + 3\int_\R \lt|\Psi(\eta(x)-\eta(z)) -\Psi(\eta_\infty(x)-\eta_\infty(z))\rt|^2 \rho_0(z)\,dz\cr
&\quad + 3\int_\R \lt|\Psi(\eta(y)-\eta(z)) -\Psi(\eta_\infty(y)-\eta_\infty(z))\rt|^2 \rho_0(z)\,dz.
\end{align}
By applying the mean value theorem and boundedness of $\psi$, we obtain
$$\begin{aligned}
&(\tilde\omega(x) - \tilde\omega(y))^2\cr
&\quad \leq 3(v(x) - v(y))^2  + 3\psi_{\rm M}^2\lt(\int_\R |\tilde\eta(x) - \tilde\eta(z)|^2\rho_0(z)\,dz + \int_\R |\tilde\eta(y) - \tilde\eta(z)|^2\rho_0(z)\,dz \rt).
\end{aligned}$$
Integrating with respect to $\rho_0(x)\rho_0(y)$, this yields
\begin{align*}
\begin{aligned}
&\iint_{\R \times \R} (\tilde\omega(x) - \tilde\omega(y))^2 \rho_0(x)\rho_0(y)\,dxdy\cr 
&\quad \leq  3\iint_{\R \times \R} (v(x) - v(y))^2\rho_0(x)\rho_0(y)\,dxdy  + 6\psi_{\rm M}^2\iint_{\R \times \R} |\tilde\eta(x) - \tilde\eta(y)|^2\rho_0(x)\rho_0(y)\,dxdy,
\end{aligned}
\end{align*}
this is,
$$\begin{aligned}
&-\iint_{\R \times \R} (v(x) - v(y))^2\rho_0(x)\rho_0(y)\,dxdy \cr
&\quad \leq  -\frac13\iint_{\R \times \R} (\tilde\omega(x) - \tilde\omega(y))^2 \rho_0(x)\rho_0(y)\,dxdy + 2\psi_{\rm M}^2\iint_{\R \times \R} |\tilde\eta(x) - \tilde\eta(y)|^2\rho_0(x)\rho_0(y)\,dxdy.
\end{aligned}$$
In terms of $X,Y, Z$ variables, this inequality can be rewritten as
\[
-Z \le -\frac{1}{3}Y + 2\psi_{\rm M}^2 X.
\]
We now combine this with the differential inequalities from Lemma \ref{lem:aux-energy}. Introducing a small parameter $\gamma>0$, we compute
\[
\begin{aligned}
\frac{d}{dt}((1+\gamma)X + Y + \gamma Z)  &= \frac{d}{dt}(X+Y) + \gamma\frac{d}{dt}(X+Z) \le -2\psi_{\rm m} X - 2\gamma \psi_{\rm m} Z \cr
 &\le -2\psi_{\rm m}X - \gamma \psi_{\rm m} Z - \frac{\gamma}{3}\psi_{\rm m} Y + 2\psi_{\rm M}^2 \gamma \psi_{\rm m} X \cr
 &\le  -\psi_{\rm m}\lt( (2 - 2\psi_{\rm M}^2\gamma) X + \frac{\gamma}{3}Y + \gamma Z \rt).
\end{aligned}
\]
Choosing 
\[
\gamma:= \frac{1}{1+2\psi_{\rm M}^2}<1,
\] 
we deduce
\[
\frac{d}{dt}((1+\gamma)X + Y + \gamma Z) \le -\frac{\psi_{\rm m} \gamma}{3}((1+\gamma)X + Y + \gamma Z).
\]
Thus, by Gr\"onwall's inequality, we have
\[
(1+\gamma)X(t) + Y(t) + \gamma Z(t) \le e^{-\frac{\psi_{\rm m} \gamma}{3}t}((1+\gamma)X_0 + Y_0 + \gamma Z_0),
\]
that is,
\bq\label{eq:L2exp}
\frac12  \iint_{\R \times \R} (\tilde \eta(x) - \tilde \eta(y))^2 + (v(x) - v(y))^2 +(\tilde\omega(x) - \tilde\omega(y))^2 \rho_0(x)\rho_0(y)\,dxdy \to 0
\eq
exponentially fast as $t \to \infty$. 

%%%%%%%%%%%%%%%%%%%%%%%%%%%%%%%%%%%%
%
%
%
%
%
%
%
%
%
%
%%%%%%%%%%%%%%%%%%%%%%%%%%%%%%%%%%%%
\subsubsection{$L^\infty$ estimates: proof of Proposition \ref{thm_2} (i)}

We now upgrade the $L^2$ decay to uniform-in-space convergence. The key idea is to introduce a modified energy functional containing a cross term between positions and velocities. This strategy allows the dissipation present in the velocity variable to be transferred to the position variable, and is a typical hypocoercivity-type argument. As a result, we obtain exponential convergence in the $L^\infty$ sense, complementing the $L^2$ estimates established above.

 For each $x \in {\rm supp}(\rho_0)$ we have
\begin{align*}
\pa_t (\eta(x) - \eta_\infty(x)) &= (v(x) - v_\infty), \\
\pa_t (v(x) - v_\infty(x)) &= - (\eta(x)-\eta_\infty(x)) -\int_\R \psi(\eta(x)-\eta(z))(v(x)-v(z))\rho_0(z)\,dz,
\end{align*}
where $v_\infty:=u_\infty$ as in \eqref{eq:uinfty:new}. 
Note that the interaction term can be decomposed as
\begin{equation*}
\begin{split}
& \int_\R \psi(\eta(x)-\eta(z))(v(x)-v(z))\rho_0(z)\,dz\\
&\quad = -\int_\R \psi(\eta(x)-\eta(z))((v(x)-v_\infty)-(v(z)-v_\infty))\rho_0(z)\,dz \\
&\quad =  -\lt(\int_\R \psi(\eta(x)-\eta(z))\rho_0(z)\,dz\rt) (v(x)-v_\infty) + \lt(\int_\R \psi(\eta(x)-\eta(z))(v(z)-v_\infty)\rho_0(z)\,dz\rt).
\end{split}
\end{equation*}
By Proposition \ref{prop:1dlc:0}, the first coefficient is bounded below by $\psi_{\rm m}>0$, i.e.
\[
\int_\R \psi(\eta(x)-\eta(z))\rho_0(z)\,dz \ge \psi(D_\eta(t)) \ge \psi(\overline{D}) \ge {\psi}_m>0.
\]
On the other hand, by recalling $
v_\infty = \int_\R \rho_0(y)v(y)\,dy$, the second term can be estimated in terms of the $L^2$ energy:
\bq\label{eq:help}
\begin{split}
\lt|\int_\R \psi(\eta(x)-\eta(z))(v(z)-v_\infty)\rho_0(z)\,dz\rt| &\le \psi_{\rm M} \lt|\int_\R |v(z)-v_\infty| \rho_0(z)\,dz\rt| \\
&\le \psi_{\rm M} \iint_{\R\times \R} |v(y)-v(z)|\rho_0(y)\rho_0(z)\,dydz \\
&\le \psi_{\rm M} \lt(\iint_{\R\times \R} |v(y)-v(z)|^2\rho_0(y)\rho_0(z)\,dydz\rt)^{1/2}\\
&=: \psi_{\rm M} g(t).
\end{split}
\eq
Here $g(t)$ decays exponentially fast as time goes to infinity thanks to \eqref{eq:L2exp}. With this decomposition, we obtain
$$
\begin{aligned}
\frac{1}{2}\frac{\pa}{\pa t} \lt((\eta(x)-\eta_\infty(x))^2 + (v(x)-v_\infty)^2\rt) &= -(v(x)-v_\infty) \lt(\int_\R \psi(\eta(x)-\eta(z))\rho_0(z)(v(x)-v(z))\,dz\rt)\cr
&\le- {\psi}_m(v(x)-v_\infty)^2 + |v(x)-v_\infty|\psi_{\rm M}g(t) \cr
&\le -\frac{{\psi}_m}{2}(v(x)-v_\infty)^2 + \frac{\psi_{\rm M}^2}{2{\psi}_m}g^2(t). 
\end{aligned}
$$
To close the estimate, we also consider the cross term
\[
\begin{split}
\frac{\pa}{\pa t} (\eta(x)-\eta_\infty)(v(x)-v_\infty) &=  (v(x)-v_\infty)^2 -(\eta(x)-\eta_\infty(x))^2  \\
 &\quad - (\eta(x)-\eta_\infty(x))\lt(\int_\R \psi(\eta(x)-\eta(z))(v(x)-v(z))\rho_0(z)\,dz\rt).
\end{split} 
\]
Using \eqref{eq:help}, we get
\[
\begin{split}
&\int_\R \psi(\eta(x)-\eta(z))(v(x)-v(z))\rho_0(z)\,dz\cr
 &\quad = \lt(\int_\R \psi(\eta(x)-\eta(z)) \rho_0(z)dz\rt) (v(x)-v_\infty)  - \int_\R \psi(\eta(x)-\eta(z)) (v(z)-v_\infty)\rho_0(z)dz \\
&\quad \le \psi_{\rm M} \lt (|v(x)-v_\infty| + g(t) \rt),
\end{split}
\]
and thus
\[
\frac{\pa}{\pa t} (\eta(x)- \eta_\infty)(v(x)-v_\infty) \le -\frac{1}{2}(\eta(x)-\eta_\infty(x))^2 + \lt(\frac{\psi_{\rm M}^2}{2}+1\rt) (v(x)-v_\infty)^2 + \frac{\psi_{\rm M}^2}{2}g^2(t).
\]
We now introduce the energy functional
\[
\mathcal{E}_\zeta(t) := \frac{1}{2}\lt((\eta(x)-\eta_\infty(x))^2 + (v(x)-v_\infty)^2\rt) + \zeta(\eta(x)-\eta_\infty(x))(v(x)-v_\infty),
\]
where $\zeta>0$ will be determined appropriately later. Then, combining the above estimates, we deduce
\[
\frac{\pa}{\pa t} \mathcal{E}_\zeta(t) \le  - \lt(\frac{{\psi}_m}{2}- \zeta \lt(\frac{\psi_{\rm M}^2}{2}+1\rt) \rt)(v(x)-v_\infty)^2 -\frac{\zeta}{2}(\eta(x)-\eta_\infty(x))^2 + \frac{\psi_{\rm M}^2}{2}\lt(\frac{1}{\psi_{\rm m}} + \zeta \rt)g^2(t).
\]
Taking
\[
\zeta := \min \lt\{\frac{{\psi}_m}{4}\lt(\frac{\psi_{\rm M}^2}{2}+1 \rt)^{-1}, \,\frac{1}{4}\rt\}, \quad c_\zeta:= \min \lt\{\frac{{\psi}_m}{4}, \frac{\zeta}{2}\rt\}, \quad C_\zeta:=  \frac{\psi_{\rm M}^2}{2}\lt(\frac{1}{{\psi}_m} + \zeta \rt),
\]
yields
\[
\begin{aligned}
\frac{\pa}{\pa t}\mathcal{E}_\zeta(t) \le - c_\zeta \lt((\eta(x)-\eta_\infty(x))^2 + (v(x)-v_\infty)^2\rt) + C_\zeta g^2(t)  \le - c_\zeta \mathcal{E}_\zeta(t) + C_\zeta g^2(t).
\end{aligned}
\]
By Gr\"onwall's inequality, we obtain
\[
\mathcal{E}_\zeta(t) \le e^{-c_\zeta t} \mathcal{E}_\zeta(0) + C_\zeta \int_0^t e^{-c_\zeta(t-s)} g^2(s)\,ds,
\]
and subsequently,
\[
\frac{1}{4} \lt((\eta(t,x)-\eta_\infty(t,x))^2 + (v(t,x)-v_\infty)^2\rt) \le  e^{-c_\zeta t} \lt( D_{\tilde{\eta}}^2(0) + D_{\tilde{v}}^2(0)\rt) + C_\zeta \int_0^t e^{-c_\zeta(t-s)} g^2(s)\,ds
\]
uniformly in $x \in {\rm supp}(\rho_0)$. Since $g(t)$ decays exponentially fast by \eqref{eq:help} and the $L^2$ estimate \eqref{eq:L2exp}, we conclude that
\[
 D_{\tilde \eta}(t) +  D_{\tilde v}(t) \to 0
\]
exponentially fast as $t \to \infty$.

Finally, recalling the definition of $\tilde\omega$ in \eqref{eq:tom}, the same exponential decay holds for $D_{\tilde\omega}(t)$.
This completes the proof of Proposition \ref{thm_2} (i).
%%%%%%%%%%%%%%%%%%%%%%%%%%%%%%%%%%%%
%
%
%
%
%
%
%
%
%
%
%%%%%%%%%%%%%%%%%%%%%%%%%%%%%%%%%%%%
\subsection{Weakly singular communication weights}

We now turn to the case of weakly singular  weights, when 
\[
\psi(r) \le \frac{B}{r^{\alpha}}, \quad \alpha \in (0,1).
\]
Here, the interaction weight is possibly unbounded near the origin, so the mean-value arguments used in the bounded case are no longer available. Instead, we derive an alternative estimate for the nonlinear term in \eqref{eq:omega:mani}, which will be the key tool for the decay analysis in this regime. 

The next lemma provides a bound that separates the effects of large and small relative 
displacements, and will serve as a central ingredient in what follows.

\begin{lemma}\label{lem:wsc}For any $\delta>0$ and any fixed $x\in{\rm supp}(\rho_0)$, we have
\[
\int_\R |\Psi(\eta(x)-\eta(z)) - \Psi(\eta_\infty(x) - \eta_\infty(z))|^2\rho_0(z)\,dz
\le  \lt( \frac{\Psi(\delta)}{\delta} \rt)^2 \int_\R |\tilde{\eta}(x) - \tilde{\eta}(z)|^2\rho_0(z)\,dz + (\Psi(\delta))^2\delta.
\]
\end{lemma}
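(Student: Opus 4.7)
The plan is to split the integration domain according to the magnitude of the displacements and exploit both the averaged Lipschitz estimate from Lemma \ref{lem:Psi:avg} and the explicit formula $\eta_\infty(z) = \eta_c(t) + 2\int_{-\infty}^z \rho_0(y)\,dy - 1$ that holds in the Coulomb--quadratic setting. Fix $x$ and abbreviate $a(z):=\eta(x)-\eta(z)$ and $b(z):=\eta_\infty(x)-\eta_\infty(z)$. By the order-preserving property of the characteristics \eqref{sgn} and the monotonicity of $\eta_\infty(\cdot)$, the pair $(a(z),b(z))$ shares the same sign for every $z \in {\rm supp}(\rho_0)$, so Lemma \ref{lem:Psi:avg} applies pointwise.

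Introduce the decomposition
\[
S_1 := \{z \in \R : \max(|a(z)|,|b(z)|) \ge \delta\}, \qquad S_2 := \{z \in \R : \max(|a(z)|,|b(z)|) < \delta\}.
\]
On $S_1$, since $r \mapsto \Psi(r)/r$ is decreasing on $(0,\infty)$ (Lemma \ref{lem:Psi:avg}), I would use
\[
|\Psi(a(z)) - \Psi(b(z))| \le \min\!\left\{\tfrac{\Psi(|a(z)|)}{|a(z)|}, \tfrac{\Psi(|b(z)|)}{|b(z)|}\right\} |a(z)-b(z)| = \tfrac{\Psi(\max(|a(z)|,|b(z)|))}{\max(|a(z)|,|b(z)|)} |a(z)-b(z)| \le \tfrac{\Psi(\delta)}{\delta}|\tilde\eta(x)-\tilde\eta(z)|,
\]
using $a(z)-b(z) = \tilde\eta(x)-\tilde\eta(z)$. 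Squaring and integrating then yields a contribution bounded by $(\Psi(\delta)/\delta)^2\!\int_\R |\tilde\eta(x)-\tilde\eta(z)|^2 \rho_0(z)\,dz$.

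On $S_2$, both $|a(z)|<\delta$ and $|b(z)|<\delta$, so $|\Psi(a(z))|,|\Psi(b(z))| \le \Psi(\delta)$ and hence $|\Psi(a(z))-\Psi(b(z))|^2 \le \Psi(\delta)^2$. It remains to bound $\rho_0(S_2)$. Since $S_2 \subseteq \{z : |\eta_\infty(x)-\eta_\infty(z)|<\delta\}$, and the explicit formula gives $d\eta_\infty(z) = 2\rho_0(z)\,dz$, the change of variables $u = \eta_\infty(z)$ converts $\rho_0(z)\,dz$ into $\tfrac{1}{2}\,du$, so that
\[
\int_{S_2} \rho_0(z)\,dz \le \int_{\{|\eta_\infty(x)-u|<\delta\}} \tfrac{1}{2}\,du \le \delta.
\]
Combining the $S_1$ and $S_2$ contributions yields the claimed inequality. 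The only subtle step is the measure bound on $S_2$, which crucially uses the identity $\eta_\infty'(z)=2\rho_0(z)$ specific to the Coulomb--quadratic configuration; once this is recognized, the rest is a direct application of Lemma \ref{lem:Psi:avg} combined with the concavity of $\Psi$.
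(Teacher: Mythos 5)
Your proof is correct and follows essentially the same route as the paper: the same decomposition of the integration domain according to whether $\max(|a(z)|,|b(z)|)$ exceeds $\delta$, the same application of Lemma \ref{lem:Psi:avg} together with the monotonicity of $r \mapsto \Psi(r)/r$ on the large-displacement region, and the same change of variables $u=\eta_\infty(z)$ with $d\eta_\infty = 2\rho_0\,dz$ to bound the mass of the small-displacement region by $\delta$. The extra step of explicitly identifying the minimizing ratio as $\Psi(\max(|a|,|b|))/\max(|a|,|b|)$ is a harmless reformulation of what the paper does implicitly.
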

\begin{proof}
The idea is to split the integration domain according to the size of the displacement. On the region where at least one of $|\eta(x)-\eta(z)|$ or $|\eta_\infty(x)-\eta_\infty(z)|$ exceeds $\delta$, Lemma \ref{lem:Psi:avg} provides a quadratic control in terms of $|\tilde{\eta}(x)-\tilde{\eta}(z)|^2$. On the complementary region where both differences are smaller than $\delta$, a direct bound by $\Psi(\delta)$ applies, and the measure of this set is controlled by $\delta$. We now make this argument precise.

Recall that $\Psi$ is odd, increasing and concave on $[0,\infty)$, and that
\[
{\rm sgn}(\eta(x)-\eta(z))={\rm sgn}(x-z)={\rm sgn}(\eta_\infty(x)-\eta_\infty(z)),
\]
thus the two arguments of $\Psi$ have the same sign (see \eqref{sgn} and $\pa_x\eta_\infty=2\rho_0\ge0$).

For a fixed $\delta > 0$, we decompose the integral domain into two parts:
\[
\begin{aligned}
&G_\delta:=\lt\{z: |\eta(x)-\eta(z)| \ge \delta\rt\} \cup \lt\{z: |\eta_\infty(x)-\eta_\infty(z)|\ge \delta \rt\},\cr
&B_\delta:= \lt\{z: |\eta(x)-\eta(z)| < \delta\rt\} \cap \lt\{z: |\eta_\infty(x)-\eta_\infty(z)| < \delta \rt\}.
\end{aligned}
\]

For $z\in G_\delta$, both arguments of $\Psi$ have the same sign and at least one of them has modulus greater than $\delta$.  By Lemma \ref{lem:Psi:avg} we have, for such $z$,
\[
  \int_{G_\delta}  |\Psi(\eta(x)-\eta(z)) - \Psi(\eta_\infty(x) - \eta_\infty(z))|^2\rho_0(z)\,dz \le \lt( \frac{\Psi(\delta)}{\delta} \rt)^2 \int_\R |\tilde{\eta}(x) - \tilde{\eta}(z)|^2\rho_0(z)\,dz.
\]
On the other hand, for $z \in B_\delta$, both differences are strictly less than $\delta$.  
Thus,
\[
 |\Psi(\eta(x)-\eta(z)) - \Psi(\eta_\infty(x) - \eta_\infty(z))| \le \Psi(\delta). 
\]
Moreover, as $B_\delta \subset \{z: |\eta_\infty(x)-\eta_\infty(z)| < \delta\}$ and
\[
 \{z: |\eta_\infty(x)-\eta_\infty(z)| < \delta\} = \lt\{z: 2 \lt|\int_x^z \rho_0(y)\,dy\rt| < \delta\rt\},
\] 
we obtain
\[
\int_{B_\delta} \rho_0(z)\,dz \le \delta.
\]
Combining these two, we deduce
\[
 \int_{B_\delta} |\Psi(\eta(x)-\eta(z)) - \Psi(\eta_\infty(x) - \eta_\infty(z))|^2\rho_0(z)\,dz \le (\Psi(\delta))^2\delta.
\]
Adding the estimates over $G_\delta$ and $B_\delta$ yields the desired bound.
\end{proof}

 By applying Lemma \ref{lem:wsc} to the nonlinear terms in \eqref{eq:omega:mani} and integrating \eqref{eq:omega:mani} with respect to $\rho_0(x)\rho_0(y)$, we get
$$\begin{aligned}
&-\iint_{\R \times \R} (v(x) - v(y))^2\rho_0(x)\rho_0(y)\,dxdy \cr
&\quad  \leq  -\frac13\iint_{\R \times \R} (\tilde\omega(x) - \tilde\omega(y))^2 \rho_0(x)\rho_0(y)\,dxdy\cr
&\qquad + 2\lt(\frac{\Psi(\delta)}{\delta}\rt)^2\iint_{\R \times \R} |\tilde\eta(x) - \tilde\eta(y)|^2\rho_0(x)\rho_0(y)\,dxdy + 2(\Psi(\delta))^2\delta.
\end{aligned}$$
In terms of the variables $X,Y,Z$, this reads
\[
-Z \le -\frac{1}{3}Y + 2\lt(\frac{\Psi(\delta)}{\delta}\rt)^2 X + 2(\Psi(\delta))^2\delta.
\]
Combining this bound with \eqref{eq:abb}, we obtain for any $\gamma>0$
\begin{align*}
\frac{d}{dt}((1+\gamma)X + Y + \gamma Z) \le  -\psi_{\rm m}\lt( \lt(2 - 2\lt(\frac{\Psi(\delta)}{\delta}\rt)^2 \gamma \rt) X + \frac{\gamma}{3}Y + \gamma Z \rt) + 2\gamma (\Psi(\delta))^2\delta.
\end{align*}

To proceed, we allow $\delta$ and $\gamma$ to be time-dependent decreasing profiles, chosen so as to control the error terms. In particular, we impose the constraint that both $\delta(t)$ and $\gamma(t)$ are decreasing and satisfy
\bq\label{gtdt:cons}
2\gamma(t) \lt(\frac{\Psi(\delta(t))}{\delta(t)}\rt)^2 \le 1.
\eq
Under this constraint, the differential inequality becomes
\[
\frac{d}{dt}((1+\gamma(t))X + Y + \gamma(t) Z) \le  -\frac{\psi_{\rm m} \gamma(t)}{3}\lt( (1+\gamma(t))X + Y + \gamma(t) Z \rt) + \delta^3(t).
\]

Since $\psi(r) \le B r^{-\alpha}$ for some $\alpha \in (0,1)$ and $B>0$, we deduce that
\[
\Psi(r) \le \frac{B}{1-\alpha} r^{1-\alpha} \quad \mbox{so that} \quad 2\lt(\frac{\Psi(r)}{r}\rt)^2 \le \frac{2B^2r^{-2\alpha}}{(1-\alpha)^2}.
\]
This suggests that to maximize the decay rate, $\gamma(t)$ should be chosen as large as possible under the constraint \eqref{gtdt:cons}. We thus set
\[
\gamma(t):=  \frac{(1-\alpha)^2\delta^{2\alpha}(t)}{2B^2}.
\]
With this choice, \eqref{gtdt:cons} holds automatically, and since $\delta(t)$ is taken decreasing, $\gamma(t)$ is also decreasing.

 To simplify the notations, we introduce
\bq\label{L:XYZ_2}
\calL(t):= (1+\gamma(t))X + Y + \gamma(t) Z,  
\eq
then we can rewrite the inequality above as
\bq\label{gold}
\dot{\calL}(t) \le - \kappa \delta^{2\alpha}(t)\calL(t) + \delta^3(t), \quad \kappa:= \frac{\psi_{\rm m}(1-\alpha)^2}{6B^2}>0.
\eq
Thus, the problem of establishing decay for $X,Y,Z$ reduces to studying the temporal behavior of $\calL(t)$. In particular, we are led to the following auxiliary lemma, which provides the desired algebraic decay rate.

\begin{lemma}\label{lem:delt:game}
Suppose that $\calL(t)$ satisfies the differential inequality \eqref{gold} with some $\kappa>0$ for all $t \in [0,\infty)$, for any positive decreasing profile $\delta(t)$. Then, we have
\[
\calL(t)= \mathcal{O} \lt(t^{-\lt(\frac{3}{2\alpha} -1\rt)}\rt), \quad t \to \infty.
\]
\end{lemma}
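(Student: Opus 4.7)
The plan is to exploit the freedom to choose the profile $\delta(t)$ by making a power-law ansatz $\delta(t)=C(1+t)^{-\beta}$ with parameters $\beta>0$ and $C>0$ to be determined, and then optimize the resulting decay. With this choice the coefficient in front of $\mathcal{L}(t)$ becomes $\kappa C^{2\alpha}(1+t)^{-2\alpha\beta}$ and the source term is $C^{3}(1+t)^{-3\beta}$. A quick scaling heuristic, balancing the quasi-stationary response $\delta^{3}/(\kappa\delta^{2\alpha})$ with the decay produced by the integrating factor, suggests taking $2\alpha\beta=1$, i.e. $\beta=1/(2\alpha)$, which yields the target rate $(3-2\alpha)\beta = 3/(2\alpha)-1$.

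More concretely, I fix $\delta(t):=C(1+t)^{-1/(2\alpha)}$, which is positive and decreasing, so \eqref{gold} applies and reduces to
\begin{equation*}
\dot{\mathcal{L}}(t)\;\le\;-\,\frac{\kappa C^{2\alpha}}{1+t}\,\mathcal{L}(t)\;+\;\frac{C^{3}}{(1+t)^{3/(2\alpha)}}.
\end{equation*}
Choosing $C$ large enough so that $\mu:=\kappa C^{2\alpha}> 3/(2\alpha)-1$, I multiply by the integrating factor $(1+t)^{\mu}$ to obtain
\begin{equation*}
\frac{d}{dt}\!\left[(1+t)^{\mu}\mathcal{L}(t)\right]\;\le\;C^{3}(1+t)^{\mu-3/(2\alpha)}.
\end{equation*}
Integrating from $0$ to $t$, using $\mu-3/(2\alpha)+1>0$, and dividing by $(1+t)^{\mu}$ then yields
\begin{equation*}
\mathcal{L}(t)\;\le\;\mathcal{L}(0)(1+t)^{-\mu}\;+\;\frac{C^{3}}{\mu-3/(2\alpha)+1}\,(1+t)^{-(3/(2\alpha)-1)}.
\end{equation*}
Since $\mu > 3/(2\alpha)-1$, the first term decays strictly faster than the second, and thus $\mathcal{L}(t)=\mathcal{O}\!\bigl(t^{-(3/(2\alpha)-1)}\bigr)$ as $t\to\infty$, as claimed.

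The analysis itself is a standard integrating-factor computation once the ansatz is fixed, so the only genuinely delicate point is the choice of exponent $\beta$: too small and the damping coefficient is too weak to overcome the source, too large and the integrating factor stops growing (indeed if $2\alpha\beta>1$ the primitive of $\delta^{2\alpha}$ is bounded and one gets no decay at all). The critical scale $\beta=1/(2\alpha)$ is precisely the borderline at which the integrating factor grows like a power of $1+t$, turning the a priori exponential-type dissipation into the algebraic decay consistent with the weakly singular regime, and matching the rate announced in Proposition \ref{thm_2}(ii) and Theorem \ref{thm_2:new}(ii) after passing from $\mathcal{L}$ to $d_{2}$ via Lemma \ref{lem:Wst}.
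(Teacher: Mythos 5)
Your proposal is correct and follows essentially the same route as the paper: both choose the power-law profile $\delta(t)=C(1+t)^{-1/(2\alpha)}$, impose the same condition $\kappa C^{2\alpha}>\frac{3}{2\alpha}-1$ on the constant, and close via the identical integrating-factor/Gr\"onwall computation.
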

\begin{proof} To balance the dissipative term $-\kappa \delta^{2\alpha}(t)\calL(t)$ against the error $\delta^3(t)$, we choose a decreasing profile of the form
\[
\delta(t) = \frac{c_0}{(1+t)^\frac1{2\alpha}}, \mbox{
with $c_0$ satisfying }
c_0 > \frac{1}{\kappa^{\frac1{2\alpha}}} \lt(\frac3{2\alpha} - 1 \rt)^{\frac1{2\alpha}}.
\] 
Then, by Gr\"onwall's inequality, we have
\[
\lt(\calL(t) (1+t)^{\kappa c_0^{2\alpha}} \rt)' \leq c_0^3 (1+t)^{\kappa c_0^{2\alpha} - \frac3{2\alpha}}.
\]
This implies
\begin{align*}
\calL(t) &\leq \calL(0) (1+t)^{-\kappa c_0^{2\alpha}} + \frac{c_0^3}{\kappa c_0 +1 - \frac3{2\alpha}}\lt((1+t)^{1 - \frac3{2\alpha}} -(1+t)^{-\kappa c_0^{2\alpha}}\rt)\cr
&\leq \lt(\calL(0) + \frac{c_0^3}{\kappa c_0^{2\alpha} +1 - \frac3{2\alpha}}\rt) (1+t)^{-\lt(\frac3{2\alpha} - 1\rt)}.
\end{align*}
This completes the proof.
\end{proof}

We are now in a position to prove Proposition \ref{thm_2} (ii). The key idea is to combine the decay estimate for the Lyapunov functional $\calL(t)$ obtained in Lemma \ref{lem:delt:game} with an additional control on the velocity fluctuations $Z(t)$. This two-step approach allows us to capture the algebraic decay rates stated in the proposition, valid for all $\alpha \in (0,1)$.

\begin{proof}[Proof of Proposition \ref{thm_2} (ii)]
By Lemma \ref{lem:delt:game} and the definition \eqref{L:XYZ_2}, we obtain
\[
X(t) + Y(t)  \le \calL(t) = \mathcal{O} \lt(t^{-\lt(\frac{3}{2\alpha} -1\rt)}\rt).
\]
Moreover, choosing $\gamma(t)$ as in Lemma \ref{lem:delt:game} yields
\[
Z(t) \le \gamma^{-1}(t) \calL(t) = \mathcal{O} \lt(t^{-\lt(\frac{3}{2\alpha} -2\rt) }\rt).
\]
However, the exponent $\lt(\frac{3}{2\alpha} -2\rt) $ becomes negative for $\alpha \ge \frac{3}{4}$, thus this bound does not guarantee decay in that regime. To overcome this, we derive an alternative estimate for $Z(t)$.

Using a manipulation of \eqref{eq:tom} similar to \eqref{eq:omega:mani}, we find
$$\begin{aligned}
(v(x) - v(y))^2
&\leq 3(\tilde\omega(x) - \tilde\omega(y))^2  + 3\int_\R \lt|\Psi(\eta(x)-\eta(z)) -\Psi(\eta_\infty(x)-\eta_\infty(z))\rt|^2 \rho_0(z)\,dz\cr
&\quad + 3\int_\R \lt|\Psi(\eta(y)-\eta(z)) -\Psi(\eta_\infty(y)-\eta_\infty(z))\rt|^2 \rho_0(z)\,dz.
\end{aligned}$$
Since $|\Psi(a) -\Psi(b)| \le |\Psi(a-b)|$ whenever $a,b$ have same sign, we obtain
$$\begin{aligned}
(v(x) - v(y))^2
&\leq 3(\tilde\omega(x) - \tilde\omega(y))^2  + 3\int_\R \lt|\Psi(\tilde{\eta}(x)-\tilde{\eta}(z))\rt|^2 \rho_0(z)\,dz + 3\int_\R \lt|\Psi(\tilde{\eta}(y)-\tilde{\eta}(z))\rt|^2 \rho_0(z)\,dz.
\end{aligned}$$
By integrating both sides against $\rho_0(x)\rho_0(y)\,dxdy$ with $\psi(r)\le Br^{-\alpha}$ and H{\"o}lder's inequality, we deduce
$$\begin{aligned}
&\iint_{\R\times\R} (v(x)-v(y))^2\rho_0(x)\rho_0(y)\, dxdy \cr
&\quad \le 3\iint_{\R\times\R} (\tilde\omega(x) - \tilde\omega(y))^2\rho_0(x)\rho_0(y)\, dxdy + \frac{6B^2}{(1-\alpha)^2} \lt( \iint_{\R \times \R} |\tilde{\eta}(x)-\tilde{\eta}(y)|^2\rho_0(x)\rho_0(y)\,dxdy\rt)^{1-\alpha}\cr
&\quad \lesssim X(t)^{1-\alpha} +Y(t).
\end{aligned}$$
From the decay rate of $X(t)$ and $Y(t)$, we obtain
\[
\int_{\R\times\R} (v(x)-v(y))^2\rho_0(x)\rho_0(y)\, dxdy = \mathcal{O} \lt(t^{-\lt\{(1-\alpha)\lt(\frac{3}{2\alpha} -1\rt)\rt\} }\rt).
\]
Combining these two decay estimates of $Z(t)$, we obtain
\[
Z(t) =   \mathcal{O} \lt(t^{-  \max \lt\{ \lt(\frac{3}{2\alpha} -2\rt), (1-\alpha)\lt(\frac{3}{2\alpha} -1\rt) \rt\} }\rt) = \begin{cases}
\mathcal{O} \lt(t^{-\lt(\frac{3}{2\alpha} -2\rt) }\rt)  &\text{if }  \alpha \in (0,\frac{1}{2}], \\[2mm]
 \mathcal{O} \lt(t^{-\lt\{(1-\alpha)\lt(\frac{3}{2\alpha} -1\rt)\rt\} }\rt)   &\text{if } \alpha \in [\frac{1}{2},1).
\end{cases}
\]
This completes the proof.
\end{proof}

%%%%%%%%%%%%%%%%%%%%%%%%%%%%%%%%%%%%%%%%%%%%%%%%%%%%%%%%%%%%%%%%%%%%%%%%%%%%%%%%%
%
%
%                    
%
%
%%%%%%%%%%%%%%%%%%%%%%%%%%%%%%%%%%%%%%%%%%%%%%%%%%%%%%%%%%%%%%%%%%%%%%%%%%%%%%%%%

\subsection{Proof of Theorem \ref{thm_2:new}}
We first notice that due to Remark \ref{rmk:relax}, we are reduced to showing the result under the local assumption that $\psi$ holds for the $R=\overline{D}$ case. 

We start by identifying the asymptotic density profile in Eulerian coordinates.  
Recall from \eqref{asymp} that
\[
\eta_\infty(t,x) = \eta_c(t) + \eta_r(x), \quad \eta_r(x)=2\int_{-\infty}^x \rho_0(z)\,dz -1.
\]
This yields the following elementary observation.

\begin{lemma}\label{lem:rin:chr}
Let $\eta_\infty$ be as above. Then the pushforward of $\rho_0$ by $\eta_\infty(t,\cdot)$ is given by the uniform flock profile
\[
\rho_\infty(t,x):=\eta_\infty(t,\cdot)\#\rho_0
= \frac{1}{2}\,\mathbf{1}_{\{\eta_c(t)-1 \le x \le \eta_c(t)+1\}}.
\]
\end{lemma}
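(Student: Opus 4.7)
The plan is to verify the identity by testing against bounded continuous functions and performing a substitution driven by the cumulative distribution function of $\rho_0$. Writing $F(x) := \int_{-\infty}^x \rho_0(z)\,dz$ for the CDF of $\rho_0$, the ``radial'' part of the asymptotic map reads $\eta_r(x) = 2F(x) - 1$, and the full map factorizes as $\eta_\infty(t,\cdot) = T_{\eta_c(t)} \circ \eta_r$, where $T_a(y) := a + y$ denotes translation by $a$. Since pushforwards respect composition and translation of the target, it suffices to compute $\eta_r \# \rho_0$ and then translate by $\eta_c(t)$.

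For the static step, I would invoke the classical probabilistic fact that if $X \sim \rho_0$, then $F(X) \sim \mathrm{Unif}[0,1]$. Concretely, for any $\varphi \in C_b(\R)$, the substitution $u = F(x)$, which gives the Stieltjes identity $\rho_0(x)\,dx = dF(x)$ and sends the support of $\rho_0$ onto $[0,1]$, yields
\[
\int_\R \varphi(\eta_r(x))\,\rho_0(x)\,dx \;=\; \int_0^1 \varphi(2u - 1)\,du \;=\; \int_\R \varphi(y)\,\tfrac{1}{2}\mathbf{1}_{[-1,1]}(y)\,dy,
\]
which is exactly the statement $\eta_r \# \rho_0 = \tfrac{1}{2}\mathbf{1}_{[-1,1]}$. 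Composing with the translation $T_{\eta_c(t)}$ then shifts the support and produces $\rho_\infty(t,\cdot) = \tfrac{1}{2}\mathbf{1}_{[\eta_c(t)-1,\,\eta_c(t)+1]}$, as claimed.

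The only genuine subtlety is that $\rho_0$ may vanish on parts of its convex hull, in which case $F$ is merely nondecreasing rather than strictly increasing. This is not a real obstacle: on any plateau of $F$ the density $\rho_0$ vanishes, so such a set contributes zero to integrals against $\rho_0\,dx$, and the substitution $u = F(x)$ remains valid as an identity between the pushforward measures (one may either invoke the change of variables for monotone continuous functions or replace $F$ by its left-continuous inverse on $[0,1]$). Consequently the entire argument reduces to this elementary monotone rearrangement and no hard step is expected.
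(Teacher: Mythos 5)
Your proof is correct and takes essentially the same route as the paper: both test against $\varphi\in C_b(\R)$ and perform the monotone change of variables driven by the CDF of $\rho_0$ (you write $u=F(x)\in[0,1]$ with $du=\rho_0(x)\,dx$, the paper writes $s=\eta_r(y)=2F(y)-1\in[-1,1]$ with $ds=2\rho_0(y)\,dy$, which is the same substitution up to an affine rescaling). Your closing remark about plateaus of $F$ being $\rho_0$-negligible is a small but genuine improvement in rigor over the paper's blanket assertion that $\eta_r$ is strictly increasing.
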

\begin{proof}
For any $\varphi \in C_b(\R)$, by definition of $\rho_\infty$, we have
\[
\int_{\R} \varphi(x)\,\rho_\infty(t,x)\,dx
= \int_{\R}\varphi(\eta_\infty(t,y))\,\rho_0(y)\,dy.
\]
Since $\eta_r$ is strictly increasing and $\eta_r'(y)=2\rho_0(y)$ a.e., we may change variables 
$s=\eta_r(y)\in[-1,1]$ with $ds=2\rho_0(y)\,dy$. Thus,
\[
\int_{\R}\varphi(\eta_\infty(t,y))\,\rho_0(y)\,dy
= \frac{1}{2}\int_{-1}^{1}\varphi(\eta_c(t)+s)\,ds
= \int_{\R}\varphi(x)\,\frac{1}{2}\,\mathbf{1}_{[\eta_c(t)-1,\,\eta_c(t)+1]}(x)\,dx,
\]
which establishes the claim.
\end{proof}

With Lemma \ref{lem:rin:chr} in hand, it follows that the asymptotic density profile $\rho_\infty$ coincides with the uniform distribution over an interval of length $2$, centered at the trajectory $\eta_c(t)$. Hence, in Eulerian coordinates, we have identified the expected limiting flock profile.

The passage from the Lagrangian decay estimates to the Eulerian framework proceeds exactly as in the proof of Theorem \ref{thm_1:new}. 
Indeed, $ {\rm d}_\infty(\rho(t),\rho_\infty(t))$, with $\rho_\infty(t)$ given by \eqref{asymprho}, can be bounded by $D_\eta(t)$ up to constants, and 
$\|u(t,\cdot)-u_\infty\|_{L^\infty(\text{supp}(\rho(t)))}$ is controlled by $D_v(t)$. 
Hence, the exponential (resp. algebraic) decay of diameters established in Section \ref{sec:m2} immediately yields the corresponding Eulerian stability estimates of Theorem \ref{thm_2:new}.

%%%%%%%%%%%%%%%%%%%%%%%%%%%%%%%%%%%%%%%%%%%%%%%%%%%%%%%%%%%%%%%%%%%%%%%%%%%%%%%%%

%%%%%%%%%%%%%%%%%%%%%%%%%%%%%%%%%%%%%%%%%%%%%%%%%%%%%%%%%%%%%%%%%%%%%%%%%%%%%%%%%

\section{Multidimensional dynamics with $(\lambda,\Lambda)$-convex interaction potential }\label{sec:m3}

We now extend our analysis to the multidimensional setting. 
In this case, the interaction potential $W$ is assumed to be $(\lambda,\Lambda)$-convex, and the communication weight $\psi$ admits a strictly positive lower bound. 
The convexity structure of $W$ provides direct coercivity estimates, which in turn enable us to derive uniform control of the Lagrangian flow map $\eta$ and its asymptotic convergence toward the equilibrium profile. This section aims to establish Proposition \ref{thm_3} below.
 
\begin{proposition}\label{thm_3}Let $d\geq1$ and let $(\eta,v)$ be a global solution to \eqref{main_sys2} with sufficient regularity. Suppose that the interaction potential $W$ satisfies \eqref{def:lam}, and that the communication weight $\psi$ has a strictly positive lower bound, i.e., there exists a $\psi_{\rm m} > 0$ such that $\psi(x) \geq \psi_{\rm m}$ for all $x \in \R^d$. Then the following decay estimates hold:
\begin{itemize}
\item[(i)]  If $\psi$ is bounded from above, then the $L^2$-type deviation in positions and velocities decays exponentially fast:
\[
\iint_{\R^d \times \R^d}\lt( |\eta(t,x) - \eta(t,y)|^2 + |v(t,x) - v(t,y)|^2\rt)\rho_0(x) \rho_0(y)\,dxdy \to 0 \quad \mbox{as } t \to \infty.
\]

\item[(ii)] If $\psi$ satisfies $\psi(x) \le B|x|^{-\alpha}$ for all $|x| \le R$ for some $\alpha \in (0,2)$ and $R>0$, the decay is algebraic:
\[
\iint_{\R^d \times \R^d}\lt( |\eta(t,x) - \eta(t,y)|^2 + |v(t,x) - v(t,y)|^2\rt)\rho_0(x) \rho_0(y)\,dxdy = \mathcal{O} \lt(t^{-\frac{2-\alpha}{\alpha}} \rt)
\]
as $t \to \infty$.
\end{itemize}
\end{proposition}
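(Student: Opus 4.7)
The plan is to follow the hypocoercivity-type scheme signalled in the introduction, working entirely in Lagrangian coordinates. I would track the symmetric second-moment quantities
\begin{equation*}
X(t) := \iint_{\R^d\times\R^d}|\eta(x)-\eta(y)|^2\rho_0(x)\rho_0(y)\,dxdy,\quad V(t) := \iint_{\R^d\times\R^d}|v(x)-v(y)|^2\rho_0(x)\rho_0(y)\,dxdy,
\end{equation*}
the cross term $C(t) := \iint (\eta(x)-\eta(y))\cdot(v(x)-v(y))\rho_0(x)\rho_0(y)\,dxdy$, the weighted dissipation $D(t) := \iint \psi(\eta(x)-\eta(y))|v(x)-v(y)|^2\rho_0(x)\rho_0(y)\,dxdy$, and the Lagrangian interaction energy $I(t) := \iint W(\eta(x)-\eta(y))\rho_0(x)\rho_0(y)\,dxdy$. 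The first step is to collect the basic identities: $\dot X = 2C$; the energy balance \eqref{eq:diss} in Lagrangian form reads $\tfrac{d}{dt}\bigl(\tfrac12 V + I\bigr) = -D$; the convexity bounds \eqref{W_Lam} yield the coercivity $\tfrac{\lambda}{2}X \le I - W(0) \le \tfrac{\Lambda}{2}X$; and assumption \eqref{bdd_psi} provides $D \ge \psi_{\rm m} V$. Differentiating $C$ along \eqref{main_sys2}, using the symmetries $W(\cdot)=W(-\cdot)$ and $\psi(\cdot)=\psi(-\cdot)$ together with a relabeling manipulation analogous to \eqref{eq:free:1} to collapse the resulting triple integrals, I expect to arrive at
\begin{equation*}
\dot C(t) = V(t) + R(t) - Q(t),\quad -\Lambda X(t) \le R(t) \le -\lambda X(t),
\end{equation*}
where the $W$-contribution $R(t)$ is sandwiched via \eqref{def:lam} and the alignment contribution reduces to $Q(t) := \iint \psi(\eta(x)-\eta(y))(\eta(x)-\eta(y))\cdot(v(x)-v(y))\rho_0\rho_0$; Cauchy--Schwarz then gives $|Q|^2 \le \widetilde D(t)\cdot D(t)$ with $\widetilde D(t) := \iint \psi(\eta(x)-\eta(y))|\eta(x)-\eta(y)|^2\rho_0\rho_0$.

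For part (i), the Lyapunov functional $\mathcal L_\zeta := \tfrac12 V + (I-W(0)) + \zeta C$ is equivalent to $X + V$ up to constants for $\zeta$ sufficiently small (via $|C|\le\sqrt{XV}$). The bounded-kernel hypothesis gives $\widetilde D \le \psi_{\rm M} X$, so AM--GM produces $\zeta|Q|\le \tfrac{\zeta\lambda}{2}X + \tfrac{\zeta\psi_{\rm M}}{2\lambda}D$; combined with $D\ge \psi_{\rm m}V$ and a choice of $\zeta$ depending only on $\lambda$, $\psi_{\rm m}$, and $\psi_{\rm M}$, this closes the uniform inequality $\dot{\mathcal L}_\zeta \le -c(X+V) \le -c'\mathcal L_\zeta$, and Gr\"onwall's inequality gives the claimed exponential decay.

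Part (ii) is where I expect the main obstacle. The singular bound $\psi(r)\le B|r|^{-\alpha}$ on $|r|\le R$, combined with the monotonicity of $\psi$ and Jensen's inequality (applicable since $(2-\alpha)/2\in(0,1)$), only yields the \textbf{sublinear} control
\begin{equation*}
\widetilde D(t) \le B\,X(t)^{(2-\alpha)/2} + \psi(R)\,X(t),
\end{equation*}
so the constant-coefficient strategy above produces an unwanted Bernoulli-type remainder $C\zeta^2\mathcal L^{(2-\alpha)/2}$ that dominates $\zeta\mathcal L$ as $\mathcal L\to 0$ and therefore cannot close. I would circumvent this by allowing a time-dependent coefficient $\zeta=\zeta(t)$, in the spirit of Lemma \ref{lem:delt:game}, tuning the Young parameter in the bound on $\zeta|Q|$ so that the resulting scalar ODI takes the schematic form
\begin{equation*}
\dot{\mathcal L}_{\zeta(t)}(t) \le -c\,\zeta(t)\,\mathcal L_{\zeta(t)}(t) + C\,\zeta(t)^2\,\mathcal L_{\zeta(t)}(t)^{(2-\alpha)/2} + \text{(lower order)},
\end{equation*}
with an additional correction $|\dot\zeta\cdot C|\lesssim |\dot\zeta|\sqrt{XV}$ that is absorbable once $\zeta$ decays polynomially. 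Choosing a polynomial profile $\zeta(t) = c_0(1+t)^{-s}$ and performing the Bernoulli substitution $z:=\mathcal L^{\alpha/2}$ converts the leading inequality into a linear one amenable to an integrating-factor argument; after a careful tuning of $s$ and $c_0$ this should yield the stated algebraic rate $\mathcal L(t) = \mathcal O(t^{-(2-\alpha)/\alpha})$. The delicate technical points I anticipate are simultaneously (a) ensuring that the correction $|\dot\zeta\cdot C|$ does not spoil the coercivity, and (b) handling the initial transient, where one must first enter the regime $|\eta(x)-\eta(y)|\le R$ in which the singular estimate for $\widetilde D$ is valid; this should follow from a preliminary uniform bound on $X(t)$ obtained from the qualitative $L^2$-decay already implied by $\tfrac{d}{dt}(\tfrac12 V+I) = -D$ together with $D\ge \psi_{\rm m}V$ and the coercivity of $I$.
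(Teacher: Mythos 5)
Your plan for part (i) is essentially the paper's argument: differentiate the cross term $C(t)$, sandwich the $W$-contribution via \eqref{def:lam}, bound the alignment term using $\psi \le \psi_{\rm M}$ together with Cauchy--Schwarz and Young, and close the hypocoercivity estimate with a fixed-$\zeta$ Lyapunov functional. That part is fine and matches the paper modulo bookkeeping.

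For part (ii) you take a genuinely different route, and you missed the key structural observation the paper exploits. You treat the alignment contribution $Q(t) = \iint \psi(\eta(x)-\eta(y))\,\langle\eta(x)-\eta(y), v(x)-v(y)\rangle\,\rho_0\rho_0$ as an error term to be estimated, which forces a sublinear bound $\widetilde D \lesssim X^{(2-\alpha)/2} + X$ and then a time-dependent parameter $\zeta(t)$, in the spirit of Lemma \ref{lem:delt:game}. The paper instead notices that $Q(t)$ is \emph{exactly} a total time derivative,
\begin{equation*}
Q(t) \;=\; \frac{d}{dt}\iint_{\R^d\times\R^d}\left(\int_0^{|\eta(t,x)-\eta(t,y)|} s\,\psi(s)\,ds\right)\rho_0(x)\rho_0(y)\,dxdy,
\end{equation*}
since $\frac{d}{dt}\int_0^{|\eta(x)-\eta(y)|}s\psi(s)\,ds = \psi(|\eta(x)-\eta(y)|)\langle\eta(x)-\eta(y),v(x)-v(y)\rangle$. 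This makes $Q$ disappear entirely from the derivative of a modified Lyapunov functional $\tilde{\mathcal E}_\xi$ obtained by adding $\xi\,\iint(\int_0^{|\eta(x)-\eta(y)|}s\psi(s)\,ds)\rho_0\rho_0$ to your $\mathcal L_\zeta$, with a \emph{fixed} $\xi$. The resulting inequality $\frac{d}{dt}\tilde{\mathcal E}_\xi \le -\min\{\psi_{\rm m}-\xi,\lambda\xi\}\,\mathcal L$, combined with the lower bound $\tilde{\mathcal E}_\xi \gtrsim \mathcal L$ (for coercivity) and the concave upper bound $\tilde{\mathcal E}_\xi \lesssim \mathcal L^{(2-\alpha)/2}$ (via $\int_0^X s\psi(s)\,ds \le \frac{X^{2-\alpha}}{2-\alpha} + \frac{\psi(R)}{2}X^2$, Jensen, and the uniform bound on $\mathcal L$), yields a clean Bernoulli inequality $\frac{d}{dt}\tilde{\mathcal E}_\xi \le -C_0\tilde{\mathcal E}_\xi^{2/(2-\alpha)}$ which integrates directly to the claimed rate. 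Your route avoids this observation at a real cost: you must track and absorb the correction $\dot\zeta\,C$, you must tune a polynomial profile $\zeta(t)$, and you flagged these as the ``delicate technical points'' yourself. The paper's identity sidesteps all of that. (You also need not worry about ``entering the regime $|\eta(x)-\eta(y)|\le R$'' pointwise: your own decomposition $\widetilde D \lesssim X^{(2-\alpha)/2}+\psi(R)X$ already handles large separations via the second term, exactly as the paper does when splitting $\int_0^X s\psi(s)\,ds$ at $X\wedge R$; the uniform-in-time bound on $\mathcal L$, obtained from the monotone decay of the energy, is all that is required.)
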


The starting point of our analysis is to derive a free energy identity for the Lagrangian system.
Using the same symmetrization arguments as in the one-dimensional case, we obtain from 
\eqref{main_sys2} the following dissipation relation:
\begin{align}\label{est_fe}
\begin{aligned}
&\frac{d}{dt}\iint_{\R^d \times \R^d} (W(\eta(x) - \eta(y)) -W(0)) \rho_0(x) \rho_0(y)\,dxdy  \cr
&\quad + \frac12\frac{d}{dt}\iint_{\R^d \times \R^d} |v(x) - v(y)|^2 \rho_0(x) \rho_0(y)\,dxdy\cr
&\qquad = - \iint_{\R^d \times \R^d} \psi(\eta(x) - \eta(y)) |v(x) - v(y)|^2 \rho_0(x) \rho_0(y)\,dxdy\cr
&\qquad \leq -\psi_{\rm m}\iint_{\R^d \times \R^d} |v(x) - v(y)|^2 \rho_0(x) \rho_0(y)\,dxdy,
\end{aligned}
\end{align}
where the last inequality follows from the assumption on $\psi(\cdot) \ge \psi_{\rm m}>0$. This identity represents the Lagrangian counterpart of the dissipation estimate \eqref{eq:diss} 
and will serve as the starting point of our analysis in higher dimensions.

In addition to this energy dissipation, we also need to control the cross term 
$\langle \eta(x)-\eta(y), v(x)-v(y)\rangle$. Differentiating this quantity in time 
and applying the symmetrization argument once again, we arrive at
\begin{align*}
&\frac{d}{dt}\iint_{\R^d \times \R^d} \lal \eta(x) - \eta(y), v(x) - v(y) \ral \rho_0(x) \rho_0(y)\,dxdy\cr
&\quad = \iint_{\R^d \times \R^d} |v(x) - v(y)|^2 \rho_0(x) \rho_0(y)\,dxdy\cr
&\qquad - \iiint_{\R^d \times \R^d \times \R^d} \lal \eta(x) -  \eta(y),  \nabla W(\eta(x) - \eta(z)) - \nabla W(\eta(y) - \eta(z))\ral \rho_0(x) \rho_0(y)\rho_0(z)\,dxdydz\cr
&\qquad - \iiint_{\R^d \times \R^d \times \R^d} \lal  \eta(x) - \eta(y), (\psi(\eta(x) - \eta(z))(v(x) - v(z)) \cr
&\hspace{5cm} - \psi(\eta(y) - \eta(z))(v(y) - v(z)) \ral\rho_0(x) \rho_0(y)\rho_0(z)\,dxdydz\cr
&\quad =: I + II + III.
\end{align*}
Using $\lambda$-convexity of $W$ in \eqref{def:lam},  $II$ can be estimated as 
$$\begin{aligned}
II  \leq -\lambda \iint_{\R^d \times \R^d} |\eta(x) - \eta(y)|^2 \rho_0(x) \rho_0(y)\,dxdy.
\end{aligned}$$
For the estimate of $III$, we use the symmetrization technique as in \eqref{eq:free:1} to get
$$\begin{aligned}
III &= 2 \iiint_{\R^d \times \R^d \times \R^d} \lal \eta(x) -  \eta(y), \psi(\eta(y) - \eta(z))(v(y) - v(z))\ral \rho_0(x) \rho_0(y)\rho_0(z)\,dxdydz \cr
&=-2\iiint_{\R^d \times \R^d \times \R^d} \lal  \eta(x) - \eta(z),  \psi(\eta(y) - \eta(z))(v(y) - v(z))\ral \rho_0(x) \rho_0(y)\rho_0(z) \,dxdydz\cr
&=-\iint_{\R^d \times \R^d} \lal \eta(y) -  \eta(z),  \psi(\eta(y) - \eta(z))(v(y) - v(z))\ral  \rho_0(y)\rho_0(z)\,dydz.
\end{aligned}$$
Putting these estimates together, we obtain
\begin{align}\label{est_m}
\begin{aligned}
&\frac{d}{dt}\iint_{\R^d \times \R^d} \lal  \eta(x) -  \eta(y), v(x) - v(y)\ral \rho_0(x) \rho_0(y)\,dxdy\cr
&\quad \leq \iint_{\R^d \times \R^d} |v(x) - v(y)|^2 \rho_0(x) \rho_0(y)\,dxdy -\lambda \iint_{\R^d \times \R^d} | \eta(x) -  \eta(y)|^2 \rho_0(x) \rho_0(y)\,dxdy\cr
&\qquad -\iint_{\R^d \times \R^d} \lal \eta(y) -  \eta(z), v(y) - v(z)\ral\psi(\eta(y) - \eta(z))  \rho_0(y)\rho_0(z)\,dydz.
\end{aligned}
\end{align}

Inequality \eqref{est_m} provides the fundamental balance between the $L^2$-type 
deviations of $\eta$ and $v$ in the multidimensional setting. The key step in deriving quantitative decay estimates is to control the last term on the right-hand side, which involves the communication weight $\psi$. If $\psi$ is uniformly bounded, this term can be controlled directly, leading to exponential decay. In contrast, if $\psi$ exhibits a singularity near the origin, a more delicate analysis is required, and only algebraic decay can be obtained.

From a structural perspective, the role of the mixed term 
$\lal \eta(x)-\eta(y), v(x)-v(y)\ral$ is essential: it transfers the dissipation 
from the velocity differences to the positional variables, thereby restoring coercivity 
at the level of a suitably chosen Lyapunov functional. This mechanism is a prototypical 
{\it hypocoercivity strategy}. In the bounded case, the resulting mixed functional yields 
exponential decay, while in the singular case, the same approach leads to algebraic rates.

%%%%%%%%%%%%%%%%%%%%%%%%%%%%%%%%%%%%%%%%%%%%%%%%%%%%%%%%%%%%%%%%%%%%%%%%%%%%%%%%%
%
%
%                    
%
%
%%%%%%%%%%%%%%%%%%%%%%%%%%%%%%%%%%%%%%%%%%%%%%%%%%%%%%%%%%%%%%%%%%%%%%%%%%%%%%%%%
\subsection{Bounded communication weights}
Let us assume that the communication weight function $\psi$ is bounded from above. 
In this case, the troublesome nonlinear term in \eqref{est_m} can be controlled by 
applying Cauchy--Schwarz and Young's inequalities, which yield
\begin{align*}
&-\iint_{\R^d \times \R^d} \lal  \eta(y) -  \eta(z), v(y) - v(z)\ral\psi(\eta(y) - \eta(z))  \rho_0(y)\rho_0(z)\,dydz\cr
&\quad \leq \frac\lambda 2  \iint_{\R^d \times \R^d} | \eta(x) -  \eta(y)|^2 \rho_0(x) \rho_0(y)\,dxdy  + \frac{\psi_{\rm M}^2}{2\lambda}\iint_{\R^d \times \R^d} |v(x) - v(y)|^2 \rho_0(x) \rho_0(y)\,dxdy.
\end{align*}
Combining this with \eqref{est_m} implies
\begin{align}\label{est_m1}
\begin{aligned}
&\frac{d}{dt}\iint_{\R^d \times \R^d} \lal  \eta(x) -  \eta(y), v(x) - v(y)\ral \rho_0(x) \rho_0(y)\,dxdy\cr
&\quad \le  -\frac\lambda 2 \iint_{\R^d \times \R^d} | \eta(x) -  \eta(y)|^2 \rho_0(x) \rho_0(y)\,dxdy\cr
&\qquad + \lt(1 + \frac{\psi_{\rm M}^2}{2\lambda} \rt) \iint_{\R^d \times \R^d} |v(x) - v(y)|^2 \rho_0(x) \rho_0(y)\,dxdy.
\end{aligned}
\end{align}

We then introduce a mixed Lyapunov functional by multiplying the above inequality by a small parameter $\zeta > 0$ and combining it with the dissipation estimate \eqref{est_fe}. Choosing $\zeta$ such that 
\[
\zeta < \min\lt\{ \frac{\lambda}{2}, \, \frac{1}{2}, \, \psi_{\rm m}\lt(1 + \frac{\psi_{\rm M}^2}{2\lambda}\rt)^{-1}\rt\},
\]
we define
$$\begin{aligned}
\mathcal{E}_\zeta(t)&:=\iint_{\R^d \times \R^d} (W(\eta(x) - \eta(y)) - W(0))\rho_0(x) \rho_0(y)\,dxdy\cr
&\qquad + \frac12\iint_{\R^d \times \R^d} |v(x) - v(y)|^2 \rho_0(x) \rho_0(y)\,dxdy\cr
&\qquad  + \zeta\iint_{\R^d \times \R^d} \lal  \eta(x) -  \eta(y), v(x) - v(y)\ral \rho_0(x) \rho_0(y)\,dxdy.
\end{aligned}$$
A direct computation using \eqref{est_fe} and \eqref{est_m1} shows 
$$\begin{aligned}
\frac{d}{dt}\mathcal{E}_\zeta(t)&\leq -\lt(\psi_{\rm m}- \zeta\lt(1 + \frac{\psi_{\rm M}^2}{2\lambda} \rt)\rt)\iint_{\R^d \times \R^d} |v(x) - v(y)|^2 \rho_0(x) \rho_0(y)\,dxdy\cr
& \quad -\frac{\lambda\zeta}{2} \iint_{\R^d \times \R^d} | \eta(x) -  \eta(y)|^2 \rho_0(x) \rho_0(y)\,dxdy,\cr
&\leq -\lt(2\psi_{\rm m}- \zeta\lt(2 + \frac{\psi_{\rm M}^2}{\lambda} \rt)\rt) \times \frac{1}{2}\iint_{\R^d \times \R^d} |v(x) - v(y)|^2 \rho_0(x) \rho_0(y)\,dxdy\cr
& \quad -\frac{\lambda\zeta}{\Lambda} \iint_{\R^d \times \R^d} (W(\eta(x)-\eta(y))-W(0)) \rho_0(x) \rho_0(y)\,dxdy,
\end{aligned}$$
due to \eqref{W_Lam}.  This gives
\bq\label{eq:md:gr}
\frac{d}{dt}\mathcal{E}_\zeta(t) \le  -C_{\lambda,\zeta} \mathcal{E}_\zeta(t),
\eq
where $C_{\lambda,\zeta} > 0$ is given by
\[
C_{\lambda,\zeta} := \min\lt\{\lt(2\psi_{\rm m}- \zeta\lt(2 + \frac{\psi_{\rm M}^2}{\lambda} \rt)\rt),  \frac{\lambda\zeta}{\Lambda} \rt\}.
\]
On the other hand, from \eqref{W_Lam} we have
\[
\mathcal{E}_\zeta(t) \geq \min\lt\{ \frac{\lambda}{2} -\zeta, \frac{1}{2}-\zeta \rt\}\iint_{\R^d \times \R^d} \lt( | \eta(x) - \eta(y)|^2 + |v(x) - v(y)|^2\rt) \rho_0(x) \rho_0(y)\,dxdy.
\]
Applying Gr\"onwall's lemma to \eqref{eq:md:gr} gives the exponential decay of $\mathcal{E}_\zeta(t)$, and subsequently, combined with the last inequality, we conclude that the $L^2$-type deviation of positions and velocities decays exponentially fast as $t\to\infty$. This proves Proposition \ref{thm_3} (i).

%%%%%%%%%%%%%%%%%%%%%%%%%%%%%%%%%%%%%%%%%%%%%%%%%%%%%%%%%%%%%%%%%%%%%%%%%%%%%%%%%
%
%
%                    
%
%
%%%%%%%%%%%%%%%%%%%%%%%%%%%%%%%%%%%%%%%%%%%%%%%%%%%%%%%%%%%%%%%%%%%%%%%%%%%%%%%%%
\subsection{Singular communication weights}
We now turn to the case of singular communication weights, where $\psi$ may blow up near the origin. Unlike the bounded case, the estimate \eqref{est_m} contains a nonlinear term that can no longer be handled by a direct Cauchy--Schwarz argument. Instead, we refine the strategy introduced in the previous subsection: rather than controlling this term by elementary inequalities, we exploit its exact structure and rewrite it in a potential form that naturally suggests the introduction of a new Lyapunov functional. This modified functional is specifically adapted to the singular setting and allows us to derive algebraic decay estimates.

Specifically, the last term on the right-hand side of \eqref{est_m} can be written as
\begin{align*}
&-\iint_{\R^d \times \R^d} \lal \eta(y) -  \eta(z), v(y) - v(z)\ral\psi(\eta(y) - \eta(z))  \rho_0(y)\rho_0(z)\,dydz\cr
&\quad = -\iint_{\R^d \times \R^d} \lt\lal \frac{\eta(y) -  \eta(z)}{|\eta(y) -  \eta(z)|}, v(y) - v(z)\rt\ral|\eta(y) -  \eta(z)|\psi(|\eta(y) - \eta(z)|)  \rho_0(y)\rho_0(z)\,dydz\cr
&\quad =-\frac{d}{dt}\iint_{\R^d \times \R^d} \lt( \int_0^{|\eta(x) - \eta(y)|} s\psi(s) \,ds \rt)\rho_0(x) \rho_0(y) \,dxdy.
\end{align*}
This identity shows that the singular factor can be expressed in terms of a new potential involving 
\[
\int_0^{|\eta(x)-\eta(y)|} s\psi(s)\,ds, 
\]
thereby motivating the introduction of a modified Lyapunov functional.

Combining this representation with \eqref{est_m}, we obtain
\begin{align}\label{est_rp}
\begin{aligned}
&\frac{d}{dt}\iint_{\R^d \times \R^d} \lal  \eta(x) -  \eta(y), v(x) - v(y)\ral \rho_0(x) \rho_0(y)\,dxdy \cr
&\quad + \frac{d}{dt}\iint_{\R^d \times \R^d} \lt( \int_0^{|\eta(x) - \eta(y)|} s\psi(s) \,ds \rt)\rho_0(x) \rho_0(y) \,dxdy \cr
&\qquad \leq \iint_{\R^d \times \R^d} |v(x) - v(y)|^2 \rho_0(x) \rho_0(y)\,dxdy -\lambda \iint_{\R^d \times \R^d} | \eta(x) -  \eta(y)|^2 \rho_0(x) \rho_0(y)\,dxdy.
\end{aligned}
\end{align}
This motivates us to define a modified energy functional, adapted to the singular setting:
\[
\begin{aligned}
\tilde{\mathcal{E}}_\xi(t) &:= \iint_{\R^d \times \R^d} (W(\eta(t,x) - \eta(t,y)) - W(0))\rho_0(x) \rho_0(y)\,dxdy  \cr
&\quad + \frac12\iint_{\R^d \times \R^d} |v(t,x) - v(t,y)|^2 \rho_0(x) \rho_0(y)\,dxdy \cr
&\quad + \xi\iint_{\R^d \times \R^d} \lal \eta(t,x) - \eta(t,y), v(t,x) - v(t,y)\ral \rho_0(x) \rho_0(y)\,dxdy\cr
&\quad + \xi\iint_{\R^d \times \R^d} \lt( \int_0^{|\eta(t,x) - \eta(t,y)|} s\psi(s) \,ds \rt)\rho_0(x) \rho_0(y) \,dxdy
\end{aligned}
\]
for some $\xi > 0$ satisfying $\xi < \min\{ \lambda, 1, \psi_{\rm m}\}$. The role of the additional terms is to absorb the singular interaction part in \eqref{est_rp} and ensure the correct coercivity of $\tilde{\mathcal{E}}_\xi(t)$ with respect to the $L^2$-type deviations.

Combining \eqref{est_fe} and \eqref{est_rp} gives
\begin{align*}
\begin{aligned}
\frac{d}{dt}\tilde{\mathcal{E}}_\xi(t) &\leq -(\psi_{\rm m}-\xi)\iint_{\R^d \times \R^d} |v(x) - v(y)|^2 \rho_0(x) \rho_0(y)\,dxdy   \cr
&\quad -\lambda\xi \iint_{\R^d \times \R^d} | \eta(x) -  \eta(y)|^2 \rho_0(x) \rho_0(y)\,dxdy,
\end{aligned}
\end{align*}
so that
\bq\label{eq:md:de}
\frac{d}{dt}\tilde{\mathcal{E}}_\xi(t) \leq -\min\lt\{ \psi_{\rm m} - \xi, \lambda \xi \rt\} \mathcal{L}(t),
\eq
where
\[
 \mathcal{L}(t):= \iint_{\R^d \times \R^d}\lt( |\eta(t,x) - \eta(t,y)|^2 + |v(t,x) - v(t,y)|^2\rt)\rho_0(x) \rho_0(y)\,dxdy.
\]
Using the $\lambda$-convexity of $W$ in \eqref{W_Lam} together with Young's inequality, we obtain
\begin{align*}
 &\iint_{\R^d \times \R^d} (W(\eta(x) - \eta(y)) - W(0))\rho_0(x) \rho_0(y)\,dxdy   + \frac12\iint_{\R^d \times \R^d} |v(x) - v(y)|^2 \rho_0(x) \rho_0(y)\,dxdy \cr
&\quad  + \xi \iint_{\R^d \times \R^d} \lal  \eta(x) -  \eta(y), v(x) - v(y)\ral \rho_0(x) \rho_0(y)\,dxdy\cr
&\qquad \geq \frac {(\lambda - \xi)}{2}\iint_{\R^d \times \R^d} | \eta(x) - \eta(y)|^2 \rho_0(x) \rho_0(y)\,dxdy \cr
&\qquad \quad+  \frac12\lt(1 - \xi\rt)\iint_{\R^d \times \R^d} |v(x) - v(y)|^2 \rho_0(x) \rho_0(y)\,dxdy.
\end{align*}
This shows that $\tilde{\mathcal{E}}_\xi(t)$ controls the $L^2$-type deviations:
$$\begin{aligned}
\tilde{\mathcal{E}}_\xi(t) &\geq \frac12\min\lt\{\lambda -\xi, 1 - \xi \rt\} \mathcal{L}(t) +  \xi\iint_{\R^d \times \R^d} \lt( \int_0^{|\eta(x) - \eta(y)|} s\psi(s) \,ds \rt)\rho_0(x) \rho_0(y) \,dxdy\cr
&\geq \frac12\min\lt\{\lambda -\xi, 1 - \xi \rt\} \mathcal{L}(t).
\end{aligned}$$
Together with \eqref{eq:md:de}, this yields a uniform-in-time bound
\bq\label{eq:calL:bd}
\mathcal{L}(t) \leq \lt(\frac{2}{\min\lt\{\lambda -\xi, 1 - \xi \rt\}} \rt)\tilde{\mathcal{E}}_\xi(t) \le  \lt(\frac{2}{\min\lt\{\lambda -\xi, 1 - \xi \rt\}} \rt)\tilde{\mathcal{E}}_\xi(0).
\eq

To derive a decay rate, we exploit the local behavior of $\psi$ near the origin. Recall that $\psi(s)=s^{-\alpha}$ with $\alpha \in (0,2)$ for $s \le R$. For any $X>0$, we have
\begin{align*}
\int_0^X s\psi(s)\,ds &= \int_0^{X\wedge R} s\psi(s)\,ds + \int_{X\wedge R}^X s\psi(s)\,ds \\
&\le \frac{(X \wedge R)^{2-\alpha}}{2-\alpha} + \frac{\psi(R)}{2}(X^2 - (X\wedge R)^2)\le \frac{X^{2-\alpha}}{2-\alpha} + \frac{\psi(R)}{2}X^2.
\end{align*}
As a consequence, the functional
\[
\mathcal{E}_\psi(t):= \iint_{\R^d \times \R^d} \lt(\int_0^{|\eta(t,x)-\eta(t,y)|} s\psi(s)\,ds\rt)\rho_0(dx)\rho_0(dy)
\]
satisfies
\[
\mathcal{E}_\psi(t)  \lesssim \iint_{\R^d \times \R^d} |\eta(t,x)-\eta(t,y)|^{2-\alpha}\rho_0(dx)\rho_0(dy) + \iint_{\R^d \times \R^d} |\eta(t,x)-\eta(t,y)|^{2}\rho_0(dx)\rho_0(dy).
\]
Applying H\"older's inequality to the first term and using the uniform bound \eqref{eq:calL:bd} for the second, we deduce
\[
\mathcal{E}_\psi(t) \lesssim \lt( \iint_{\R^d \times \R^d} |\eta(t,x)-\eta(t,y)|^{2}\rho_0(dx)\rho_0(dy)\rt)^{\frac{2-\alpha}{2}}.
\]

Combining this with \eqref{eq:calL:bd} and the definition of $\tilde{\mathcal{E}}_\xi$, we arrive at
\[
\tilde{\mathcal{E}}_\xi(t) \lesssim \mathcal{L}(t) + \mathcal{E}_\psi (t)\lesssim \mathcal{L}^{\frac{2-\alpha}{2}}(t), \quad \frac{2-\alpha}{2} \in (0,1).
\]
Thus, the differential inequality \eqref{eq:md:de} becomes
\[
\frac{d}{dt}\tilde{\mathcal{E}}_\xi(t) \leq  - C_0\lt(\tilde{\mathcal{E}}_\xi(t)\rt)^{\frac{2}{2-\alpha}},
\]
for some $C_0 > 0$ independent of $t$. Then we have
\[
\tilde{\mathcal{E}}_\xi(t) \le \lt(\tilde{\mathcal{E}}_\xi^{\frac{\alpha}{\alpha-2}}(0) + C_0\lt(\frac{\alpha}{2-\alpha}\rt)t\rt)^{-\frac{2-\alpha}{\alpha}},
\]
and subsequently, recalling the first inequality in \eqref{eq:calL:bd}, we obtain the algebraic decay as
\[
\calL(t) \lesssim \tilde{\mathcal{E}}_\xi(t) \lesssim t^{-\frac{2-\alpha}{\alpha}}
\]
for $t>0$ large enough. This completes the proof of the algebraic decay in Proposition \ref{thm_3} (ii).

%%%%%%%%%%%%%%%%%%%%%%%%%%%%%%%%%%%%%%%%%%%%%%%%%%%%%%%%%%%%%%
%
%
%
%
%
%
%
%
%
%
%
%%%%%%%
%%%%%%%%%%%%%%%%%%%%%%%%%%%%%%%%%%%%%%%%%%%%%%%%%%%%%%%%%%%%%%%%%%%%%%%%%%%%%%%%%

\subsection{Proof of Theorem \ref{thm_3:new}}
Finally, we connect the multidimensional Lagrangian estimates of Proposition \ref{thm_3} with the Eulerian formulation. 
Since $\rho(t)$ is transported by the flow $\eta(t,\cdot)$, we apply Lemma \ref{lem:Wst} with $T_1=\eta(t,\cdot)$ and $T_2= \eta_c(t)$ to obtain 
\[
 {\rm d}_2(\rho(t),\delta_{\eta_c(t)})^2 \le \iint_{\R^d\times\R^d} |\eta(t,x)-\eta(t,y)|^2 \rho_0(x)\rho_0(y)\,dxdy,
\]
so that $ {\rm d}_2(\rho(t),\delta_{\eta_c(t)})$ is controlled precisely by the $L^2$ estimates in previous subsections.

For the velocity variables, we use again the conservation of total momentum:
\[
u_\infty = \int_{\R^d} u_0(x)\rho_0(x)\,dx = \int_{\R^d} u(\eta(t,y),t)\,\rho_0(y)\,dy \in \overline{\mathrm{conv}}\{u(t,x):x\in {\rm supp}\rho(t)\}.
\]
This implies
\[
\|u(t,\cdot)-u_\infty\|_{L^2(\R^d,d\rho(t))}^2 \le \iint_{\R^d\times\R^d} |u(t,x)-u(t,y)|^2 \rho(t,x)\rho(t,y)\,dxdy,
\]
so that the $L^2$-deviation of velocities is controlled by the dissipation functional in Proposition \ref{thm_3}.  

Therefore, Proposition \ref{thm_3} directly yields the Eulerian convergence estimates of Theorem \ref{thm_3:new}, with exponential decay in the case of bounded communication weights and algebraic decay in the presence of weakly singular weights.

%%%%%%%%%%%%%%%%%%%%%%%%%%%%%%%%%%%%%%%%%%%%%%%%%%%%%%%%%%%%%%%%%%%%%%%%%%%%%%%%
%%%%%%%%%%%%%%%%%%%%%%%%%%%%%%%%%%%%%%%%%%%%%%%%%%%%%%%%%%%%%%%%%%%%%%%%%%%%%%%%%

\section*{Acknowledgments}
JAC was supported by the Advanced Grant Nonlocal-CPD (Nonlocal PDEs for Complex Particle Dynamics: Phase Transitions, Patterns and Synchronization) of the European Research Council Executive Agency (ERC) under the European Union Horizon 2020 research and innovation programme (grant agreement No. 883363) and partially supported by the EPSRC EP/V051121/1. JAC was partially supported by the ``Maria de Maeztu'' Excellence Unit IMAG, reference CEX2020-001105-M, funded by MCIN\slash AEI \slash10.13039\slash501100011033\slash.  YPC and DK were supported by the NRF grant no. 2022R1A2C1002820 and RS-202400406821. DK acknowledges support of the Institut Henri Poincar\'e (UAR 839 CNRS-Sorbonne Universit\'e), and LabEx CARMIN (ANR-10-LABX-59-01). OT acknowledges support by the Netherlands Organization for Scientific Research (NWO) under Grant No.\ 680.92.18.05.

%%%%%%%%%%%%%%%%%%%%%%%%%%%%%%%%%%%%%%%%%%%%%%%%%%%%%%%%%%%%%%%%%%%%%%%%%%%%%%%%%
%
%
%                        thebibliography
%
%
%%%%%%%%%%%%%%%%%%%%%%%%%%%%%%%%%%%%%%%%%%%%%%%%%%%%%%%%%%%%%%%%%%%%%%%%%%%%%%%%%

\bibliographystyle{abbrv}
\bibliography{LT_hydro_final}

\end{document}